\newcommand{\Aut}{\ensuremath{\operatorname{Aut}}}
\newcommand{\N}{\mathbb{N}}
\newcommand{\G}{\Gamma}
\newcommand{\VG}{V(\G)}
\newcommand{\EG}{E(\G)}
\newcommand{\Gvn}{G_v^{[n]}}
\newcommand{\XkL}{\mathcal{X}(k,L)}
\newcommand{\la}{\langle}
\newcommand{\ra}{\rangle}
\DeclareMathOperator{\Wr}{wr}
\newcommand{\CAT}{\operatorname{CAT}}
\newcommand{\girth}{\operatorname{girth}}
\newcommand{\st}{\operatorname{st}}
\newcommand{\SL}{\ensuremath{\operatorname{SL}}}
\newcommand{\Sym}{\ensuremath{\operatorname{Sym}}}
\newcommand{\soc}{\ensuremath{\operatorname{soc}}}
\newcommand{\GL}{\ensuremath{\operatorname{GL}}}
\newcommand{\PSL}{\ensuremath{\operatorname{PSL}}}
\newcommand{\PSp}{\ensuremath{\operatorname{PSp}}}
\newcommand{\PSU}{\ensuremath{\operatorname{PSU}}}
\newcommand{\PGL}{\ensuremath{\operatorname{PGL}}}
\newcommand{\PG}{\ensuremath{\operatorname{PG}}}
\newcommand{\PGaL}{\operatorname{P\Gamma L}}
\newcommand{\PGaSp}{\operatorname{P\Gamma Sp}}
\newcommand{\PGammaU}{\operatorname{P\Gamma U}}
\newcommand{\GF}{\ensuremath{\operatorname{GF}}}
\def\GammaL{{\rm \Gamma L}}
\def\mC{\mathbb{C}}
\def\mF{\mathbb{F}}
\def\mQ{\mathbb{Q}}
\def\polhk#1{\setbox0=\hbox{#1}{\ooalign{\hidewidth
    \lower1.0ex\hbox{$\,\lhook$}\hidewidth\crcr\unhbox0}}}
\newcommand{\Swiatkowski}{\'Swi{\polhk{a}}tkowski}
\newtheorem{theorem}{Theorem}[section]
\newtheorem{prop}[theorem]{Proposition}
\newtheorem{lemma}[theorem]{Lemma}
\newtheorem{corollary}[theorem]{Corollary}
\newtheorem{definition}[theorem]{Definition}
\theoremstyle{definition}
\newtheorem{example}[theorem]{Example}
\title{Characterising star-transitive and st(edge)-transitive graphs}
\author[Giudici]{Michael Giudici}
\email{michael.giudici@uwa.edu.au}
\author[Li]{Cai Heng Li}
\email{cai.heng.li@uwa.edu.au}
\author[Seress]{\'Akos Seress}
\email{akos@math.ohio-state.edu}
\author[Thomas]{Anne Thomas}
\email{anne.thomas@sydney.edu.au}
\address[Giudici, Li and Seress]{School of Mathematics and Statistics, The University of Western Australia, 35 Stirling Highway, Crawley WA 6009, Australia}
\address[Seress]{The Ohio State University, Department of Mathematics, 231 W 18th Avenue, Columbus, OH 43210, USA}
\address[Thomas]{School of Mathematics and Statistics F07, University of Sydney NSW 2006, Australia}
\thanks{This research was supported in part by ARC Grants DP120100446 (for Giudici), DP1096525 (for Li and Seress), and DP110100440 (for Thomas). \'Akos Seress is also supported in part by an Australian Professorial Fellowship and by the NSF. Anne Thomas is supported in part by an Australian Postdoctoral Fellowship.}
\date{\today}
\begin{document}

\begin{abstract}  
Recent work of Lazarovich provides necessary and sufficient conditions on a graph $L$ for there to exist a unique simply-connected $(k,L)$-complex.  The two conditions are symmetry properties of the graph, namely star-transitivity and st(edge)-transitivity. In this paper we investigate star-transitive and st(edge)-transitive graphs by studying the structure of the vertex and edge stabilisers of such graphs. We also provide new examples of graphs that are both star-transitive and st(edge)-transitive.
\end{abstract}

\maketitle

\section{Introduction}
\label{s:introduction}

In this paper we investigate graph-theoretic conditions on the links of vertices in certain simply-connected polygonal complexes, such that by recent work of Lazarovich \cite{L}, local data specify these complexes uniquely.

A $2$-dimensional CW-complex $X$ is called a \emph{polygonal complex} if both of the following hold:
\begin{enumerate}
\item the attaching maps of $X$ are homeomorphisms; and
\item the intersection of any two closed cells of $X$ is either empty or exactly one closed cell.
\end{enumerate}
These conditions imply that the $1$-skeleton of $X$, that is, the graph given by the vertices and edges of $X$, is simple, that is, has no loops or multiple edges.  More details on these and subsequent topological terms will be provided in Section \ref{s:algtop}. We will refer to the closed $1$-cells of $X$ as \emph{edges} and the closed $2$-cells as \emph{faces}.  The boundary of any face $P$ of $X$ is a cycle of $k$ edges, for some integer $k \geq 3$, and so we may also refer to $P$ as a \emph{$k$-gon}.  For each  vertex $x$ of $X$, the \emph{link} of $x$ is the simple graph with vertex set the edges of $X$ containing $x$, edge set the faces of $X$ containing $x$, and two vertices of the link adjacent if and only if the corresponding edges in $X$ are contained in a common face.  We do not usually think of polygonal complexes as being embedded in any space.   Polygonal complexes are, however, often metrised so that each face is a regular Euclidean polygon of side length one.  In some cases the faces may be metrised as hyperbolic polygons instead.  

Polygonal complexes play an important role in combinatorial and geometric group theory.  The Cayley $2$-complex of a group presentation is a polygonal complex, and many groups are investigated by considering their action on an associated polygonal complex of nonpositive or negative curvature (see Section \ref{s:algtop} for the definitions of these curvature conditions).  From a slightly different point of view, if $X$ is a simply-connected, locally finite polygonal complex, then the automorphism group of $X$ is naturally a totally disconnected locally compact group.  As explained in the survey \cite{FHT}, we may hope to extend to this setting results from the theory of Lie groups and their lattices.  

Given an integer $k \geq 3$ and a simple graph $L$, a \emph{$(k,L)$-complex} is a polygonal complex $X$ such that each face is a $k$-gon and the link of each vertex is $L$.    For example, if $C_n$ denotes the cycle graph on $n$ vertices, then the boundary of a tetrahedron is a $(3,C_3)$-complex, and the regular tiling of the Euclidean plane with squares is a $(4,C_4)$-complex.   The (cartesian) product of two trees of bivalency $\{m,n\}$ is a $(4,K_{m,n})$-complex, where $K_{m,n}$ is the complete bipartite graph on $m + n$ vertices.  If $L$ is the Heawood graph, that is, the point-line incidence graph of the projective plane $\mathrm{PG}(2,2)$, then the buildings for both $\SL_3(\mQ_2)$ and $\SL_3(\mF_2((t)))$ are $(3,L)$-complexes, which are not isomorphic.  There is a close relationship between $(k,L)$-complexes and rank $3$ incidence geometries, which we discuss  in Section \ref{s:algtop}.

We say that the pair $(k,L)$ satisfies the \emph{Gromov Link Condition} if $k \geq m$ and $\girth(L) \geq n$, where $(m,n) \in \{ (3,6), (4,4), (6,3) \}$.  If this condition holds, then as we recall in Section \ref{s:algtop}, a simply-connected $(k,L)$-complex $X$ is either nonpositively or negatively curved, as its faces are metrised as Euclidean or hyperbolic polygons respectively.  We assume throughout this paper that the graph $L$ is finite and connected, equivalently $X$ is locally finite and has no local cut-points.

Denote by $\XkL$ the collection of all simply-connected $(k,L)$-complexes (up to isomorphism).  As described in \cite{FHT}, simply-connected $(k,L)$-complexes are often constructed as universal covers of finite $(k,L)$-complexes.  Also, discrete groups which act upon simply-connected $(k,L)$-complexes are often constructed either as fundamental groups of finite $(k,L)$-complexes,  or as fundamental groups of complexes of finite groups over polygonal complexes which have all faces $k$-gons and the link of the local development at each vertex being the graph $L$.  In each of these cases, in order to identify the universal cover, which will be \emph{some} simply-connected $(k,L)$-complex, it is essential to know whether $|\XkL| = 1$.

The question of when $|\XkL| = 1$ has been addressed by several authors, with the most complete answer to date given by Lazarovich~\cite{L}.    Ballmann and Brin \cite{BB} provided an inductive construction of simply-connected $(k,L)$-complexes whenever the pair $(k,L)$ satisfies the Gromov Link Condition and certain obvious obstructions do not occur.  (For example, if $k$ is odd and $L$ is bipartite of bivalency $\{\ell,r\}$ with $\ell \neq r$, then there is no $(k,L)$-complex.)  Moreover, Ballmann and Brin and independently Haglund \cite{H2} showed that there are uncountably many non-isomorphic simply-connected $(k,K_n)$-complexes when $k\geq 6$ and $n\geq 4$ (here, $K_n$ is the complete graph on $n$ vertices).   We discuss other results prior to \cite{L}, on both uniqueness and non-uniqueness of $(k,L)$-complexes, in Section \ref{s:examples}.  

In \cite{L}, the graph-theoretic notions of \emph{star-transitivity} and \emph{st(edge)-transitivity} are introduced.  We recall the definition of these terms in Definitions~\ref{def:star and iso} and \ref{def:star trans} below, where we also define \emph{$G$-star-transitivity} and \emph{$G$-st(edge)-transitivity} for $G$ a subgroup of $\Aut(L)$.  The Uniqueness Theorem of \cite{L}  states that if $k \geq 4$, the pair $(k,L)$ satisfies the Gromov Link Condition and $L$ is both star-transitive and st(edge)-transitive, then $|\XkL| \leq 1$.  Moreover, Theorem A of \cite{L} states that for $k \geq 4$ even and $(k,L)$ satisfying the Gromov Link Condition, $|\XkL| = 1$ if and only if $L$ is both star-transitive and st(edge)-transitive.  

In this paper we investigate star-transitivity and st(edge)-transitivity for graphs, expanding on the results and examples given in \cite[Section 1.1]{L}.  Our main results are the following. 
(For the group notation and graph definitions used in these theorems, we refer to Section~\ref{s:definitions}.)
We first consider vertex-transitive graphs, that is, graphs $L$ such that $\Aut(L)$ acts transitively on the set of vertices of $L$.

\begin{theorem}\label{v-star-st(edge)-t}
Let $L$ be a connected graph of valency $r\ge3$, let $G\le\Aut(L)$ be vertex-transitive and let $v$ be an arbitrary vertex of $L$.  Then $L$ is $G$-star-transitive and $G$-st(edge)-transitive if and only if
one of the following holds:
\begin{enumerate}[$(1)$]
\item $L$ is $(G,3)$-transitive, and $G_v=S_r\times S_{r-1}$.

\item $L$ is cubic and $(G,4)$-arc-transitive, and $G_v=S_4$ or $S_4\times S_2$.

\item $L$ is of valency $4$ and $(G,4)$-arc-transitive, and $G_v=3^2{:}\GL(2,3)$ or $[3^5]{:}\GL(2,3)$.
\end{enumerate}
\end{theorem}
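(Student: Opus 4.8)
The plan is to reduce both transitivity hypotheses to purely local conditions on the vertex and arc stabilisers, and then to feed these into the classification theory of $s$-arc-transitive graphs. First I would unwind Definitions~\ref{def:star and iso} and~\ref{def:star trans}: since a star is an abstract copy of $K_{1,r}$ with automorphism group $S_r$, I would show that, given vertex-transitivity, $G$-star-transitivity is equivalent to the local action $G_v^{L(v)} := G_v/G_v^{[1]}$ being the full symmetric group $\Sym(L(v)) \cong S_r$ on the set $L(v)$ of neighbours of $v$. In particular $L(v)$ is an independent set, so $\girth(L) \geq 4$. Next I would show that $G$-st(edge)-transitivity is equivalent to $G$ containing an edge-reversing element together with the arc-stabiliser $G_{uv}$ (for an edge $\{u,v\}$) inducing the full direct product $S_{r-1} \times S_{r-1}$ on $(L(u)\setminus\{u,v\})\times(L(v)\setminus\{u,v\})$.

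This \emph{independence} is the essential extra content. The point is that, because of the product structure, the stabiliser of a $2$-arc $(a,u,v)$ still surjects onto the second factor $S_{r-1}$ acting on $L(v)\setminus\{u\}$; fixing $a$ in the first factor places no constraint on the second. Hence the stabiliser of a $2$-arc acts transitively on its continuations, and $L$ is $(G,3)$-arc-transitive. By Weiss's theorem $L$ is therefore $(G,s)$-transitive for some $3 \le s \le 7$. Note that st(edge)-transitivity does \emph{not} force $4$-arc-transitivity, since that would be a condition at the next level (on $L(b)\setminus\{v\}$ for a further vertex $b$) which is invisible to the star of the single edge $\{u,v\}$; this is precisely why case~(1) is only $(G,3)$-transitive in general.

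With the hypotheses in this form, the heart of the argument is the classification step, carried out by valency. When $s=3$ the product condition forces $G_v^{[1]} \cong S_{r-1}$ and the extension $G_v = G_v^{[1]}.G_v^{L(v)}$ to split as a direct product, giving $G_v \cong S_r \times S_{r-1}$ and case~(1) for every $r \ge 3$. When $s \ge 4$ I would determine $G_v$ from the known structure of vertex stabilisers in $s$-arc-transitive graphs with full symmetric local action, using Djoković--Miller in the cubic case, Gardiner and Weiss for valency $4$, and the general amalgam together with the Thompson--Wielandt theorem for arbitrary $r$. The crucial claim is that for $r \ge 5$ no graph with $s \ge 4$, local action $S_r$, and the product condition occurs, so that $s \ge 4$ forces valency $3$ or $4$; the surviving possibilities are then read off as $S_4$ and $S_4 \times S_2$ for cubic graphs ($s=4,5$) and $3^2{:}\GL(2,3)$ and $[3^5]{:}\GL(2,3)$ for valency $4$ ($s=4,7$), yielding cases~(2) and~(3). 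Finally, for the converse I would check each listed pair $(s,G_v)$ directly: the local action is visibly $\Sym(L(v)) \cong S_r$ in every case (for instance $\GL(2,3)/\{\pm 1\} \cong \PGL(2,3) \cong S_4$), giving star-transitivity, while a short count inside each $G_v$ confirms that $G_{uv}$ realises the full $S_{r-1}\times S_{r-1}$ and that an edge-reversal exists, giving st(edge)-transitivity.

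I expect the main obstacle to be the classification step for $s \ge 4$, and in particular the elimination of valency $\ge 5$ together with the verification that precisely these four sporadic stabilisers satisfy the product condition. This is exactly where the deep machinery of Weiss's and Trofimov--Weiss's theorems on $s$-arc-transitive amalgams must be invoked and matched against the st(edge) hypothesis, and where the bookkeeping of which $s$ actually occur for each valency is most delicate.
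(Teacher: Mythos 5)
Your proposal follows essentially the same route as the paper: reduce star-transitivity to the local action being $S_r$ and st(edge)-transitivity to the arc stabiliser inducing the full product $S_{r-1}\times S_{r-1}$ on the punctured neighbourhoods, deduce $(G,3)$-arc-transitivity, and then classify by valency via Tutte/Djokovi\v{c}--Miller, Weiss--Poto\v{c}nik, and the Gardiner--Weiss ($p$-local) machinery to eliminate $s\ge 4$ for $r\ge 5$. The only slip is the claim that full symmetric local action makes $L(v)$ independent (false for $K_{r+1}$); girth at least $4$ instead follows from st(edge)-transitivity, and this does not affect the rest of the argument.
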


We note that all cases in Theorem \ref{v-star-st(edge)-t} give rise to examples. Case (1) is realised by the Odd graphs (see Section \ref{s:odd}), Case (2) with $G_v=S_4$ is realised by the Heawood graph and Case (2) with $G_v=S_4\times S_2$ is realised  by the generalised quadrangle associated with the symplectic group $\PSp(4,2)$. The groups $3^2{:}\GL(2,3)$ or $[3^5]{:}\GL(2,3)$ in part (3) of Theorem \ref{v-star-st(edge)-t} are the parabolic subgroups of $\PGL(3,3)$ and the exceptional group of Lie type $G_2(3)$. Case (3) is then realised by the point-line incidence graph of the projective plane $\PG(2,3)$ and the generalised hexagon associated with $G_2(3)$.

We then consider graphs which are not vertex transitive.

\begin{theorem}\label{vtx-intrans}
Let $L$ be a connected graph with minimal valency at least three, let $G  \le \Aut(L)$, and let $\{ v,w \}$ be an arbitrary edge of $L$. Assume that $L$ is $G$-star-transitive and $G$-st(edge)-transitive but not vertex-transitive. Then $L$ is a locally $(G,3)$-arc-transitive bipartite graph of bi-valency $\{ \ell,r\}$ with $\ell\not=r$, and after possibly exchanging the role of $v$ and $w$,
one of the following holds:
\begin{enumerate}[$(1)$]
\item $G_{vw}^{[1]}$ acts nontrivially on both $\Gamma_2(v)$ and $\Gamma_2(w)$, and $\{ \ell,r \}=\{ 3,5 \}$;

\item $G_{vw}^{[1]}=1$, $G_v=S_r\times S_{\ell-1}$, and $G_w=S_\ell\times S_{r-1}$;

\item $G_w^{[2]}=1$, and 
$(A_{r-1})^{\ell-1}\le G_{vw}^{[1]}\le (S_{r-1})^{\ell-1}$, 

\[\begin{array}{rllll}
(A_r\times (A_{r-1})^{\ell-1}).2.S_{\ell-1}&\leqslant &G_v&\leqslant&S_r\times (S_{r-1}\Wr S_{\ell-1}), \mbox{and}\\
(A_{r-1})^{\ell}.2.S_\ell& \leqslant& G_w &\leqslant &S_{r-1}\Wr S_\ell;
\end{array}\]

\item  $|\Gamma(v)|=r\le 5$.
\end{enumerate}
\end{theorem}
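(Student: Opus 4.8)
The plan is to show first that the hypotheses force $L$ to be locally $(G,3)$-arc-transitive and bipartite, and then to classify the edge amalgam by analysing the normal subgroup $K:=G_{vw}^{[1]}$ of the edge stabiliser through its action on the second neighbourhoods $\Gamma_2(v)$ and $\Gamma_2(w)$. The four cases will correspond to how, and on which side, $K$ acts nontrivially.

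First I would unpack the two symmetry conditions. Star-transitivity gives that for every vertex $x$ the stabiliser $G_x$ induces the full symmetric group on $\Gamma(x)$; in particular $G_x^{\Gamma(x)}$ is $2$-transitive, so $L$ is locally $(G,2)$-arc-transitive. St(edge)-transitivity then gives that $G_{vw}$ induces $\Sym(\Gamma(v)\setminus\{w\})\times\Sym(\Gamma(w)\setminus\{v\})$ on the remaining neighbours of the edge, upgrading this to local $(G,3)$-arc-transitivity. Since $L$ is not vertex-transitive, $G$ has exactly two orbits on vertices, and the standard reduction for locally $s$-arc-transitive graphs shows that $L$ is bipartite with the two parts being these orbits; I take $v,w$ in distinct parts and set $r=|\Gamma(v)|$, $\ell=|\Gamma(w)|$. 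If $\ell=r$, then the star of the edge $\{v,w\}$ admits an automorphism swapping $v$ and $w$, and st(edge)-transitivity realises it by an element of $G$, which would connect the two orbits and contradict vertex-intransitivity; hence $\ell\ne r$.

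The core of the argument is the structure of $K=G_{vw}^{[1]}\trianglelefteq G_{vw}$. Since $\Gamma(v)$ and $\Gamma(w)$ are fixed pointwise by $K$, on $\Gamma_2(w)$ it fixes the branch through $v$ and permutes within the $\ell-1$ branches at the vertices of $\Gamma(w)\setminus\{v\}$, each a fibre of size $r-1$; thus modulo $G_w^{[2]}$ it embeds into $(S_{r-1})^{\ell-1}$, and symmetrically modulo $G_v^{[2]}$ into $(S_{\ell-1})^{r-1}$ via $\Gamma_2(v)$. Now $G_{vw}$ normalises $K$ and, by st(edge)-transitivity, induces the full $S_{\ell-1}$ permuting the $\ell-1$ branches, while local $(G,3)$-arc-transitivity together with star-transitivity at the outer vertices makes the stabiliser of a branch induce a group containing $A_{r-1}$ on its fibre. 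Analysing $K$ as an $S_{\ell-1}$-invariant normal subgroup of $(S_{r-1})^{\ell-1}$ whose projection to each factor is a normal subgroup of a group containing $A_{r-1}$, I would conclude, using the simplicity of $A_{r-1}$ for $r\ge 6$, that $K$ is either trivial or contains $(A_{r-1})^{\ell-1}$. This produces the principal trichotomy: $K=1$ gives case (2), where the amalgam is rigid and one reads off $G_v=S_r\times S_{\ell-1}$ and $G_w=S_\ell\times S_{r-1}$; if $K$ is nontrivial on just the $\Gamma_2(w)$-side, so that $G_w^{[2]}=1$, then $K$ contains $(A_{r-1})^{\ell-1}$ and the sandwich bounds on $G_v$ and $G_w$ in case (3) follow by recombining $K$ with the $S_{\ell-1}$ and $S_r$ quotients, the wreath product $S_{r-1}\Wr S_{\ell-1}$ appearing precisely as $K$ extended by the branch-permuting $S_{\ell-1}$.

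The remaining and hardest possibility is that $K$ acts nontrivially on both $\Gamma_2(v)$ and $\Gamma_2(w)$. Here the two sides constrain each other: a nontrivial element of $K$ moving a vertex of $\Gamma_2(w)$ must remain compatible with the transitivity data read off from the $v$-side, and propagating this compatibility along $3$-arcs rapidly bounds the valencies. I expect this to be the main obstacle, and the plan is to push these constraints until only $\{\ell,r\}=\{3,5\}$ survives, yielding case (1). Finally, all configurations left uncovered are those where the normal-subgroup-of-a-product step breaks down because $A_{r-1}$ is too small to be simple, or because the outer local action fails to restrict with image containing $A_{r-1}$; these are exactly the exceptional small-valency amalgams, which I would collect and treat separately as the residual case $r\le 5$ (case (4)). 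The delicate points throughout are the precise control of the second kernels $G_v^{[2]}$ and $G_w^{[2]}$ and the verification that stabilising the far vertex $v$ does not destroy the $A_{r-1}$ action on a fibre, which is where the exceptional valencies in cases (1) and (4) originate.
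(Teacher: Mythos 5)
Your overall strategy---reduce to local $(G,3)$-arc-transitivity and bipartiteness with $\ell\ne r$, then classify according to how $K:=G_{vw}^{[1]}$ acts on the two second neighbourhoods, using the simplicity of $A_{r-1}$ for $r\ge 6$ to force $K$ to be trivial or to contain $(A_{r-1})^{\ell-1}$---is essentially the paper's, and your derivation of cases (2) and (3) under the hypothesis $G_w^{[2]}=1$ matches the paper's Lemma~\ref{[2]=1} (which makes the ``normal subgroup of a product'' step precise via Scott's lemma, after using $G_w^{[2]}=1$ to embed $G_w$ faithfully into $S_{r-1}\Wr S_\ell$). But there are two genuine gaps. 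First, your trichotomy silently assumes that if $K$ acts trivially on $\Gamma_2(v)$ and nontrivially on $\Gamma_2(w)$ then $G_w^{[2]}=1$. This is false: nontriviality of $K$ on $\Gamma_2(w)$ only gives $G_w^{[2]}\lneq K$, not $G_w^{[2]}=1$. The configuration $K=G_v^{[2]}\ne 1$ together with $G_w^{[2]}\ne 1$ is therefore never reached by your case division; the paper treats it separately in Lemma~\ref{[2]not=1}, where it is shown to force $r\le 5$, and that argument is not a routine variant of the others.

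Second, and more seriously, both case (1) and the configuration just mentioned rest on a tool you never invoke: a Thompson--Wielandt-type theorem for locally primitive graphs (van Bon's result, Theorem~\ref{t:vB} in the paper). When $K$ acts nontrivially on both $\Gamma_2(v)$ and $\Gamma_2(w)$, that theorem forces $K$ to be a $p$-group; then for a $3$-arc $uvwx$ the group $1\ne K^{\Gamma(x)}$ is a subnormal $p$-subgroup of $G_{wx}^{\Gamma(x)}\cong S_{r-1}$ (and similarly $1\ne K^{\Gamma(u)}$ is subnormal in $S_{\ell-1}$), which is only possible for $r,\ell\le 5$, after which a short argument on the prime $p$ eliminates valency $4$ and leaves $\{\ell,r\}=\{3,5\}$. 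Your plan to ``push compatibility constraints along $3$-arcs'' does not by itself produce any bound on the valencies: the entire content of case (1) is the $p$-group conclusion, which comes from the external theorem, not from transitivity bookkeeping. As written, the proposal establishes cases (2) and (3) but leaves case (1), case (4), and the overlooked $G_v^{[2]}\ne 1\ne G_w^{[2]}$ configuration essentially open.
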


We provide examples in each of the first three cases. Case (1) is realised by the generalised quadrangle associated with the unitary group $\PSU(4,2)$ (Example \ref{eg:hermitian}). Case (2) is realised by the vertex-maximal clique incidence graph of the Johnson graph (see Section \ref{s:johnson}) while Case (3) is realised by the vertex-maximal clique incidence graph of the Hamming graph (Section \ref{s:hamming}). We do not have any example for case (4) that is not also an example for one of the previous cases.

We also characterise graphs of small minimum valency which are star-transitive and/or st(edge)-transitive.  In particular, we show the following.

\begin{theorem}
\label{thm:small valency}
Let $L$ be a connected graph with minimal valency one or two. Then $L$ is star-transitive and st(edge)-transitive if and only if one of the following holds:
\begin{itemize}
\item[(1)] $L$ is a complete bipartite graph $K_{1,n}$ for some $n \ge 1$;
\item[(2)] $L$ is a cycle of length $n$ for some $n \ge 3$; or
\item[(3)] there exists a locally fully symmetric, arc-transitive graph $\Sigma$ of
valency at least three such that $L$ can be obtained by subdividing each edge 
of $\Sigma$ with a new vertex of valency two.
\end{itemize}
\end{theorem}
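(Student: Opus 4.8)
The plan is to prove both directions, the forward (classification) direction being the substantial one, and throughout I will use that star-transitivity (Definition~\ref{def:star trans}, via Definition~\ref{def:star and iso}) forces each vertex stabiliser in $\Aut(L)$ to induce the full symmetric group on the set of neighbours of that vertex, and that st(edge)-transitivity forces the analogous full symmetry on the star of each edge.

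For the reverse direction I would verify directly that each of the three families satisfies the two conditions. For $K_{1,n}$ one has $\Aut(L)=S_n$ fixing the central vertex and permuting the $n$ leaves, and a short check gives both properties. For $C_n$ one uses $\Aut(L)=D_n$: the reflection fixing a vertex realises the transposition of its two neighbours, giving star-transitivity, and the reflection fixing an edge realises the required symmetry of the edge-star. For family (3), the key observation is that since $\Sigma$ has valency at least three while the subdivision vertices have valency two, the two vertex classes of $L$ are preserved by every automorphism, so $\Aut(L)\cong\Aut(\Sigma)$; local full symmetry of $\Sigma$ then supplies the full symmetric group on the neighbours of each original vertex of $L$, while arc-transitivity of $\Sigma$ provides, for each subdivision vertex, an automorphism reversing the corresponding edge and hence transposing its two neighbours, and together these give star-transitivity, whereas the stabiliser of an arc of $\Sigma$ inducing the full symmetric group on the remaining neighbours delivers st(edge)-transitivity.

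For the forward direction, assume $L$ is star-transitive and st(edge)-transitive with minimal valency $\delta\in\{1,2\}$. If $\delta=1$, let $v$ be a vertex adjacent to a leaf $\ell$; the induced symmetric action on $\Gamma(v)$ carries $\ell$ to every neighbour of $v$, so every neighbour of $v$ has valency one, and connectedness gives $L=K_{1,n}$ with $n=\deg(v)$. If $\delta=2$ and $L$ is regular, then $L$ is a disjoint union of cycles, so connectedness gives $L=C_n$. The main case is $\delta=2$ with $L$ not regular. Here I would first show that the valency-two vertices form an independent set all of whose neighbours have valency at least three: the transposition of the two neighbours of a valency-two vertex shows these neighbours have equal valency, and if that common valency were two a propagation argument would force a cycle as a connected component, contradicting non-regularity. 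Applying full local symmetry at an original vertex adjacent to a valency-two vertex then shows all of its neighbours have valency two, and a connectedness/propagation argument shows $L$ is bipartite with parts $B$ (valency $r\ge 3$) and $M$ (valency two), each $M$-vertex having exactly two neighbours in $B$ and each $B$-vertex all neighbours in $M$. Defining $\Sigma$ to have vertex set $B$ and one edge for each vertex of $M$, joining its two neighbours, exhibits $L$ as the subdivision of $\Sigma$; since $r\ge 3$ the classes $B,M$ are characteristic and $\Aut(L)\cong\Aut(\Sigma)$. Translating back, full local symmetry of $L$ at $B$-vertices becomes local full symmetry of $\Sigma$, and since every edge of $L$ is of the single type $B$--$M$, st(edge)-transitivity makes $L$ edge-transitive, which, as arcs of $\Sigma$ correspond bijectively to edges of $L$, is precisely arc-transitivity of $\Sigma$.

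The hardest part is this last case: establishing the rigid bipartite structure from purely local hypotheses, verifying that distinct valency-two vertices have distinct pairs of neighbours so that $\Sigma$ is a genuine graph rather than a multigraph, and checking carefully that the isomorphism $\Aut(L)\cong\Aut(\Sigma)$ transports star-transitivity and st(edge)-transitivity to exactly local full symmetry and arc-transitivity of $\Sigma$, with no stronger or weaker condition slipping in.
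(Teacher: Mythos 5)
Your overall strategy is essentially the paper's: the paper assembles this theorem from Lemma~\ref{lem:valency one} (minimal valency one forces $K_{1,n}$ once star-transitivity is imposed), the observation that a connected $2$-regular graph is a cycle, Lemma~\ref{min two star} (star-transitive, minimal valency two, not $2$-regular forces the $1$-subdivision of a locally fully symmetric arc-transitive $\Sigma$ of valency at least three), and Proposition~\ref{min two examples} for the converse. Your deviations are minor and legitimate: where the paper invokes Lazarovich's Lemma~1.1 to get edge-transitivity of $L$ and hence the bipartition into valency-two and valency-$r$ vertices, you propagate the local symmetric actions directly (equal valency of the two neighbours of a valency-two vertex, then full local symmetry at a valency-$\geq 3$ vertex, then induction on distance --- this is in fact closer to the paper's proof of Lemma~\ref{min two edge} than of Lemma~\ref{min two star}); and where the paper obtains arc-transitivity of $\Sigma$ from star isomorphisms at the valency-two vertices (``edges of $\Sigma$ can be turned around''), you obtain it from edge-transitivity of $L$ via the bijection between edges of $L$ and arcs of $\Sigma$. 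Both routes work.

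The one step of your plan that cannot be carried out as written is precisely the one you flag: ``distinct valency-two vertices have distinct pairs of neighbours'' does \emph{not} follow from the hypotheses. The graph $K_{2,n}$ with $n\ge 3$ is connected, has minimal valency two, is not $2$-regular, and is both star-transitive (Lemma~\ref{l:suff star}(2)) and st(edge)-transitive (complete bipartite graphs are st(edge)-transitive, as in Lemma~\ref{lem:girth 3} and its use in Lemma~\ref{lem:stedge}); yet all $n$ of its valency-two vertices share the same pair of neighbours, so your $\Sigma$ is the multigraph on two vertices with $n$ parallel edges rather than a simple graph, and the identification $\Aut(L)\cong\Aut(\Sigma)$ also breaks down there. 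Note that by Lemma~\ref{lem:girth 3} this is the \emph{only} obstruction: if two valency-two vertices of an st(edge)-transitive $L$ share both neighbours then $L$ has girth $4$, hence is complete bipartite, hence is $K_{2,n}$; so once girth at least $5$ is established (or $K_{2,n}$ is split off as a separate case, or $\Sigma$ is permitted to be a multigraph) your argument goes through. To be fair, the paper's own Lemma~\ref{min two star} passes over the same point silently, so this is as much a remark on the statement being proved as on your proof of it; but since you explicitly promised to verify simplicity of $\Sigma$, you should be aware that the verification fails for $K_{2,n}$ and the case must be handled separately.
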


 In Section~\ref{s:small}, we characterise star-transitive and st(edge)-transitive graphs of small girth and minimal valency and prove Theorem~\ref{thm:small valency}. In Section~\ref{s:observations}, we prove some preliminary general results about star-transitive and st(edge)-transitive graphs. The proofs of Theorems~\ref{v-star-st(edge)-t} and \ref{vtx-intrans} are given in Sections~\ref{s:vertex trans} and \ref{s:vertex intran}, respectively. Finally, in Section~\ref{s:examples}, we describe many examples of graphs that are both star-transitive and st(edge)-transitive.

We expect that our results and examples will contribute to the program described in \cite{FHT} of investigating the automorphism groups of nonpositively curved polygonal complexes and their lattices.  In particular, if $X$ is a simply-connected $(k,L)$-complex with $L$ star-transitive and st(edge)-transitive, then basic questions for which our results may prove useful include whether $\Aut(X)$ is discrete, and whether $\Aut(X)$ admits a lattice.

\subsection*{Acknowledgements}

We thank Nir Lazarovich and Michah Sageev for providing us with the preprint \cite{L}.  We also thank Hendrik van Maldeghem for pointing us to the references for local actions of generalised polygons.

\section{Background on $(k,L)$-complexes}\label{s:algtop}

In Section \ref{sec:algtopggt} we briefly recall several key definitions from algebraic topology and geometric group theory, and apply these in the setting of $(k,L)$-complexes.  We then discuss in Section \ref{sec:incidence} the relationship between $(k,L)$-complexes and rank $3$ incidence geometries.

\subsection{Definitions from algebraic topology and geometric group theory}\label{sec:algtopggt}

We first recall the definition of a \emph{$2$-dimensional CW-complex}, also known as a \emph{$2$-dimensional cell complex}.  (A reference is, for example,  \cite{Hatcher}.)   Denote by $D^1$ the closed interval $[-1,1]$ with boundary $\partial D^1$ the points $S^0 = \{-1,1\}$, and by $D^2$ the closed unit disk in the Euclidean plane with boundary $\partial D^2$ the unit circle $S^1$.
A space $X$ is a $2$-dimensional CW-complex if it is constructed as follows:
\begin{enumerate}
\item Begin with a discrete set $X^{(0)}$, called the \emph{$0$-skeleton}, whose points are the \emph{$0$-cells}.
\item The \emph{$1$-skeleton} $X^{(1)}$ is the quotient space obtained from the disjoint union $X^{(0)} \sqcup_\alpha D^1_\alpha$, of $X^{(0)}$ with a collection of closed intervals $D^1_\alpha$, by identifying each boundary point $x \in \partial D^1_\alpha$ with a $0$-cell $\varphi_\alpha(x) \in X^{(0)}$.  That is, each $\varphi_\alpha$ is a function from $S^0 = \{-1,1\}$ to $X^{(0)}$ (necessarily continuous).  We equip $X^{(1)}$ with the quotient topology. The images of the $D^1_\alpha$ in $X^{(1)}$ are called the (closed) \emph{$1$-cells}.  
\item The \emph{$2$-skeleton} $X^{(2)}$ is the quotient space obtained from the disjoint union $X^{(1)} \sqcup_\beta D^2_\beta$, of $X^{(1)}$ with a collection of closed disks $D^2_\beta$, by identifying each boundary point $x \in \partial D^2_\beta$ with a point $\varphi_\beta(x) \in X^{(1)}$, where each $\varphi_\beta$ is a continuous function from the circle $\partial D^2_\beta = S^{1}$ to $X^{(1)}$.  We equip $X^{(2)}$ with the quotient topology. The images of the $D^2_\beta$ in $X^{(2)}$ are called the (closed) \emph{$2$-cells}.
\item Since $X$ is $2$-dimensional, $X$ is equal to its $2$-skeleton $X^{(2)}$. 
\end{enumerate}
The maps $\varphi_\alpha$ and $\varphi_\beta$ are called the \emph{attaching maps}.  The (closed) \emph{cells} of $X$ are its (closed) $0$-, $1$- and $2$-cells.  The $1$-skeleton of $X$ may be thought of as a graph (not necessarily simple), with vertex set the $0$-skeleton and edges the $1$-cells.  The additional conditions required in order for a $2$-dimensional CW-complex $X$ to be a \emph{polygonal complex} are stated in the introduction.

We next recall the definitions of geodesic metric spaces and the curvature conditions $\CAT(0)$ and $\CAT(-1)$.  For details on this material, see \cite{BH}.  Let $(X,d_X)$ be a metric space.  A continuous function $\gamma:[a,b] \to X$ (for $a < b$ real numbers) is a \emph{geodesic} if for all $a \leq t < t' \leq b$, we have $d_X(\gamma(t),\gamma(t')) = t' - t$.   The metric space $(X,d_X)$ is \emph{geodesic} if for all $x,y \in X$, there is a geodesic $\gamma:[a,b]\to X$ such that $\gamma(a) = x$ and $\gamma(b) = y$.  We may denote this geodesic by $[x,y]$.  Note that there may be more than one geodesic connecting $x$ and $y$.  For example, in Euclidean space, each geodesic is a straight line segment and there is a unique geodesic connecting each pair of points, while on the sphere $S^2$ with its usual metric, each geodesic is an arc of a great circle, and antipodal points are connected by infinitely many geodesics.

Let $(X,d_X)$ be a geodesic metric space.  A \emph{geodesic triangle} in $X$ is a triple of points $x,y,z$, together with a choice of geodesics $[x,y]$, $[y,z]$ and $[z,x]$.  Given a geodesic triangle $\Delta = \Delta(x,y,z)$, a \emph{comparison triangle} in the Euclidean plane is a triple of points $\bar{x}, \bar{y}, \bar{z}$ such that $d_X(x,y) = d(\bar{x},\bar{y})$, $d_X(y,z) = d(\bar{y}, \bar{z})$ and $d_X(z,x) = d(\bar{z} , \bar{x})$, where $d$ is the Euclidean metric.  For each point $p \in [x,y]$, there is a \emph{comparison point} denoted $\bar{p}$ in the straight line segment $[\bar{x}, \bar{y}]$, with the comparison point $\bar{p}$ defined by the equation $d_X(x,p) = d(\bar{x} , \bar{p})$.  Similarly, we define comparison points for $p$ in $[y,z]$ and $[z,x]$.  The space $X$ is said to be $\CAT(0)$ if for every geodesic triangle $\Delta = \Delta(x,y,z)$, and every pair of points $p,q \in [x,y] \cup [y,z] \cup [z,x]$, we have $d_X(p,q) \leq d(\bar{p} ,\bar{q})$.  Roughly speaking, triangles in a $\CAT(0)$ space are ``no fatter" than Euclidean triangles.  A $\CAT(0)$ space is sometimes said to be \emph{nonpositively curved}.  

We may instead consider comparison triangles in the hyperbolic plane, and define $X$ to be $\CAT(-1)$ or \emph{negatively curved} if its triangles are ``no fatter" than hyperbolic triangles.  Every $\CAT(-1)$ space is also a $\CAT(0)$ space (Theorem II.1.12 of \cite{BH}).  Key properties of $\CAT(0)$ spaces $X$ include that each pair of points in $X$ is connected by a unique geodesic, that $X$ is contractible hence simply-connected and that if a group acting by isometries on $X$ has a bounded orbit, then it fixes a point (see, respectively, Proposition II.1.4, Corollary II.1.5 and Corollary II.2.8  of \cite{BH}).

We now consider the case of interest, when $X$ is a $(k,L)$-complex, as defined in the introduction.  Assume that each face of $X$ is metrised as a regular Euclidean $k$-gon, or that each face of $X$ is metrised as a regular hyperbolic $k$-gon.  Then by Theorem I.7.50 of \cite{BH}, $X$ is a complete geodesic metric space when equipped with the ``taut string" metric, in which each geodesic is a concatenation of a finite number of geodesics contained in faces. 

If the faces of a $(k,L)$-complex $X$ are regular Euclidean $k$-gons and $(k,L)$ satisfies the Gromov Link Condition, then $X$ is locally $\CAT(0)$ (see I.5.24 of \cite{BH}).  Hence by the Cartan--Hadamard Theorem \cite[II.4.1]{BH}, the universal cover of $X$ is a $\CAT(0)$ space.  Similarly, if either $k > m$ and $\girth(L) \geq n$, or $k \geq m$ and $\girth(L) > n$,  for $(m,n) \in \{ (3,6), (4,4), (6,3) \}$, then the faces of $X$ may be metrised as regular hyperbolic $k$-gons with vertex angles $2\pi/\girth(L)$, the complex $X$ is locally $\CAT(-1)$ and the universal cover of $X$ is thus $\CAT(-1)$.  We will henceforth be considering simply-connected $(k,L)$-complexes $X$ which satisfy the Gromov Link Condition, and so  are $\CAT(0)$.

\subsection{Relationship with incidence geometries}\label{sec:incidence}
A $(k,L)$-complex may be viewed combinatorially as a rank three incidence structure, namely, a geometry consisting of three types of objects, vertices, edges and faces, such that each edge is incident with exactly two vertices, each face is incident with exactly $k$ edges and $k$ vertices, and the graph with vertex set the edges incident with a given vertex and edges the faces is  isomorphic to $L$. In the notation of Buekenhout \cite{Bu}, the geometry has diagram
\[\xymatrix@R=0mm{
{\bullet}   \ar@{-}[rr]^{(k)} &   & {\bullet}   \ar@{-}[rr]^{\overline{L}} &  & {\bullet} 
\\
{\mbox{vertices}} & & {\mbox{edges}} & & {\mbox{faces}} 
}\]
where the label $(k)$ denotes the vertex-edge incidence graph of a  $k$-cycle and $\overline{L}$ denotes the vertex-edge incidence graph of $L$. 

We call a $(k,L)$-complex \emph{connected} if both the link graph and the vertex-edge incidence graph are connected. A \emph{flag} in the geometry is an incident vertex-edge-face triple. All  connected $(k,L)$-complexes that can be embedded in $\mathbb{E}^3$ and have a group of automorphisms acting regularly on flags were classified by Pellicer and Shulte \cite{PS1,PS2}. The only finite ones were seen to be the eighteen finite regular polyhedra. Polyhedra are precisely the finite  $(k,L)$-complexes with $L$ a cycle.



\section{Graph- and group-theoretic definitions and notation}
\label{s:definitions}

In Section \ref{sec:graphs} we discuss the main definitions from graph theory that we will require, and then in Section~\ref{sec:groups} we define the group-theoretic notation that we will use.  For ease of comparison with the literature, we now switch to standard graph-theoretic notation.  We also assume some basic definitions from algebraic graph theory \cite{GR} and from the theory of permutation groups \cite{DM}. All graphs considered in this paper are finite. 

\subsection{Graph theory}\label{sec:graphs}

Let $\G$ be a graph with vertex set $\VG$ and edge set $\EG$. If $\G$ is {\em simple}, that is, a graph without loops or multiple edges, and $e \in \EG$ connects the vertices $u$ and $v$, then we identify the (undirected) edge $e$ with the set $\{u,v\}$, and we denote the {\em arc} (directed edge) from $u$ to $v$ by $uv$.  For each vertex $v$ we denote by $\G(v)$ the set of neighbours of $v$, that is, the set of vertices adjacent to $v$. The set of all vertices at distance $i$ from $v$ will be denoted by $\Gamma_i(v)$. In particular, $\Gamma_1(v)=\Gamma(v)$. For $X \subseteq \VG$, the {\em restriction of $\G$ to $X$} is the graph $\G\mid_X$ with vertex set $X$ and edge set consisting of those edges of $\G$ that have both endpoints in $X$. The {\em girth} $\girth(\G)$ is the length of the shortest cycle in $\G$.  The \emph{valency} of a vertex is the number of neighbours that it has and the graph $\G$ is called {\em $k$-regular} if each $v \in \VG$ has valency $k$. A $3$-regular graph is also called a \emph{cubic graph}. If $\G$ is $k$-regular then we also say that $\G$ {\em has valency $k$}. A graph is called \emph{regular} if it is $k$-regular for some $k$. If $\G$ is bipartite and vertices in the two parts of the bipartition have valency $\ell$ and $r$, respectively, then we say that $\G$ {\em has bi-valency $\{ \ell,r \}$}.

If $G$ is a group of automorphisms of $\G$ and $v \in \VG$ then $G_v$ denotes the stabiliser in $G$ of the vertex $v$. If $X \subseteq \VG$ is stabilised setwise by a subgroup $H \le G$ then we denote by $H^X$ the permutation group induced by $H$ on $X$. In particular, $G_v^{\G(v)}$ is the group induced on $\G(v)$ by $G_v$. We say that $G$ is {\em locally transitive}, {\em locally primitive}, {\em locally $2$-transitive}, or {\em locally fully symmetric} if for each $v \in \VG$ the group $G_v^{\G(v)}$ is transitive, primitive, $2$-transitive, or the symmetric group on $\G(v)$, respectively. The graph $\G$ is locally transitive, locally primitive, locally $2$-transitive, or locally fully symmetric if there exists $G \le \Aut(\G)$ with the appropriate property (equivalently, as all four properties hold in overgroups, $\Aut(\G)$ has the appropriate property). 

For an edge $\{u,v\}$ we define $G_{\{u,v\}}$ to be the setwise stabiliser of $\{u,v\}$, and for an arc $uv$, we define $G_{uv}:= G_u \cap G_v$.  Let $d$ be the usual distance function on $\G$, so that each edge has length $1$.  Then for each natural number $n$ and each $v \in \VG $, we define 
\[ \Gvn := \{ g \in G_v \mid w^g = w \, \forall w \in \VG \mbox{ such that } d(v,w) \leq n \}  \]
as the pointwise stabiliser of the ball of radius $n$ around $v$. For $\{ u,v \} \in \EG$, $G_{uv}^{[1]}:= G_u^{[1]} \cap G_v^{[1]}$. 

An \emph{$s$-arc} in a graph $\Gamma$ is an $(s+1)$-tuple $(v_0,v_1,\ldots,v_s)$ of vertices such that $\{ v_i,v_{i+1}\} \in \EG$ and $v_{i-1}\neq v_{i+1}$, that is, it is a walk of length $s$ that does not immediately turn back on itself. Let $G \le \Aut(\Gamma)$. We say that $\Gamma$ is \emph{locally $(G,s)$-arc transitive} if for each vertex $v$, the stabiliser $G_v$ acts transitively on the set of $s$-arcs of $\Gamma$ starting at $v$. If $G$ is transitive on the set of all $s$-arcs in $\Gamma$ then we say that $\Gamma$ is \emph{$(G,s)$-arc transitive}. If all vertices of $\Gamma$ have valency at least two then locally $s$-arc transitive implies locally $(s-1)$-arc transitive. Moreover, $s$-arc transitive implies locally $s$-arc transitive. Conversely, if $G$ is transitive on $\VG$ and $\Gamma$ is locally $(G,s)$-arc transitive then $\Gamma$ is $(G,s)$-arc transitive. We observe that a graph with all vertices having valency at least 2 is locally $(G,2)$-arc transitive if and only if $G_v^{\Gamma(v)}$ is 2-transitive for all vertices $v$ (see for example \cite[Lemma 3.2]{GLP1}). Moreover, if $\G$ is locally $G$-transitive then $G$ acts transitively on $\EG$ and either $G$ is transitive on $\VG$ or $\G$ is bipartite and $G$ acts transitively on both sets of the bipartition. If $\G$ is $(G,s)$-arc-transitive but not $(G,s+1)$-arc-transitive then we say that $\G$ is {\em $(G,s)$-transitive}. Finally, if $G=\Aut(\G)$ then we drop the name $G$ from all notation introduced in this paragraph and say that $\G$ is \emph{locally $s$-arc-transitive}, \emph{$s$-arc-transitive}, and \emph{$s$-transitive}, respectively. 

The study of $s$-arc transitive graphs goes back to the  seminal work of Tutte \cite{tutte47,tutte59} who showed that a cubic graph  is at most 5-arc transitive. This was later extended by Weiss \cite{W2} to show that any graph of valency at least three is at most 7-arc transitive. Weiss \cite{Weiss78} also showed that a cubic graph is at most locally 7-arc transitive while Stellmacher \cite{sleq9} has announced that a graph of valency at least 3 is at most locally 9-arc transitive. In each case the upper bound is met. Note that a cycle is $s$-arc transitive for all values of $s$.

The following definitions, using a slightly different language, were introduced by Lazarovich \cite{L}.
\begin{definition}  
\label{def:star and iso}
{\em Let $\G$ be a simple graph, let $v \in \VG$, and let $e=\{ u,v\} \in \EG$. 
The \emph{open star of $v$}, denoted $\st(v)$, is the union of $\{v \}$ and the set $\{ f \in \EG \mid f \mbox{ is incident to } v \}$.  Similarly, the \emph{open edge-star of $e$}, denoted $\st(e)$ or $\st(\{u,v\})$, is the union of the sets  $\{ u \}$, $\{ v \}$, and  $\{ f \in \EG \mid f \mbox{ is incident to at least one of } u,v \}$.

Given two open stars $\st(v_1)$ and $\st(v_2)$, a {\em star isomorphism} is a bijection $\varphi: \st(v_1) \to \st(v_2)$ such that $\varphi(v_1)=v_2$. 

Given two open edge-stars $\st(\{u_1,v_1\})$ and $\st(\{u_2,v_2\})$, an {\em edge-star isomorphism}  is a bijection $\varphi: \st(\{u_1,v_1\}) \to \st(\{u_2,v_2\})$ such that
\begin{itemize}
\item[(i)] $\varphi(\VG \cap \st(\{u_1,v_1\}))=\VG \cap \st(\{u_2,v_2\})$, that is, the vertices $u_1,v_1$ are mapped (in some order) to the vertices $u_2,v_2$.
\item[(ii)] $\varphi$ is incidence-preserving, that is, $f \in \EG \cap \st(\{u_1,v_1\})$ is incident to $u_1$ if and only if $\varphi(f)$ is incident to $\varphi(u_1)$ and 
$f \in \EG \cap \st(\{u_1,v_1\})$ is incident to $v_1$ if and only if $\varphi(f)$ is incident to $\varphi(v_1)$. In particular, $\varphi(\{ u_1,v_1 \}) = \{ u_2,v_2 \}$. 
\end{itemize}}
\end{definition}

\begin{definition}
\label{def:star trans}
{\em Let $\G$ be a simple graph, and let $G \le \Aut(\G)$. Then $\G$ is called 
\begin{itemize}
\item[(i)]  \emph{$G$-star-transitive} if for all $v_1,v_2\in \VG$ and for all star isomorphisms $\varphi: \st(v_1) \to \st(v_2)$, there exists an automorphism $\psi \in G$
such that $\psi(v_1)=\varphi(v_1)$ and for all $f \in \EG \cap \st(v_1)$ we have $\psi(f)=\varphi(f)$; and 
\item[(ii)]  \emph{$G$-st(edge)-transitive} if for all $\{u_1,v_1\}, \{u_2,v_2\} \in \EG$ and edge-star isomorphisms  $\varphi: \st(\{u_1,v_1\}) \to \st(\{u_2,v_2\})$,
there exists an automorphism $\psi \in G$
such that $\psi(u_1)=\varphi(u_1)$,  $\psi(v_1)=\varphi(v_1)$, and for all $f \in \EG \cap \st(\{u_1,v_1\})$ we have $\psi(f)=\varphi(f)$.
\end{itemize}}
If $G =\Aut(\G)$ then we simply say that $\G$ is star-transitive or st(edge)-transitive, respectively. 
\end{definition}

A subtlety of Definition \ref{def:star trans} is that if there is no star-isomorphism $\st(v_1) \to \st(v_2)$ or edge-star isomorphism $\st(\{u_1,v_1\}) \to \st(\{u_2,v_2\})$, then the required property of extending to a graph automorphism holds trivially. 
Another subtlety is the introduction of the notions of star-transitivity and st(edge)-transitivity relative to subgroups $G \le \Aut(\G)$. Considering subgroups of $\Aut(\G)$ with certain transitivity properties is quite common in algebraic graph theory; the main reason is that there are examples where some $G \le \Aut(\G)$ extends to covers of $\G$ but the full automorphism group does not. For example, the icosahedron is a cover of the complete graph $K_6$ but not all of $\Aut(K_6)=S_6$ extends.

The reason for the somewhat cumbersome formulation of the definitions above is that in the case when $\girth(\G) \le 4$, the definition of a star isomorphism or edge-star isomorphism $\varphi$ does {\em not} 
require that $\varphi$ preserves the possible adjacency relations among the neighbours of the vertices occurring in the open stars and edge-stars. However, the graph automorphisms defined in Definition~\ref{def:star trans}, extending the star isomorphisms and edge-star isomorphisms, must preserve such adjacency relations.  The following result, whose proof is immediate, says that for large enough girth we can work with much simpler definitions.  Given a vertex $v$ we let  $X(v):=\{ v \} \cup \G(v)$, and for an edge $\{u,v\}$ we let
$X(\{u,v\}):=\{ u \} \cup \{ v \} \cup \G(u) \cup \G(v)$.

\begin{prop}
\label{prop:equiv}
$(i)$ If $\girth(\G) \ge 4$ then for $v \in \VG$, $\st(v)$ can be identified with the restriction 
$\G \mid_{X(v)}$. A star isomorphism is then 
a graph isomorphism $\varphi_1: \G \mid_{X(v_1)} \to \G \mid_{X(v_2)} $ and 
$\G$ is star-transitive if and only if 
every  star isomorphism extends to an automorphism of $\G$. 

$(ii)$ If $\girth(\G) \ge 5$ then for $\{u,v\} \in \EG$, $\st(\{u,v\})$ can be identified with the restriction $\G \mid_{X(\{u,v\})}$. An edge-star isomorphism is then a graph isomorphism 
$\varphi_2: \G \mid_{X(\{u_1,v_1\})} \to \G \mid_{X(\{u_2,v_2\})}$ and the graph $\G$ is st(edge)-transitive if and only if 
every edge-star isomorphism extends to an automorphism of $\G$.
\end{prop}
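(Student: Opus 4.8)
The plan is to treat this as a direct translation between the set-theoretic language of (edge-)stars in Definitions~\ref{def:star and iso} and \ref{def:star trans} and the induced-subgraph language on the right-hand side, using the girth hypotheses only to pin down exactly which edges occur in the relevant induced subgraphs. First I would record the following elementary consequences of the girth bounds. If $\girth(\G)\ge 4$ then $\G$ has no triangles, so no two neighbours of a vertex $v$ are adjacent; hence the induced subgraph $\G\mid_{X(v)}$ is the star $K_{1,|\G(v)|}$ with centre $v$ and leaf set $\G(v)$. If $\girth(\G)\ge 5$ then in addition $\G$ has no $4$-cycles: this rules out any edge between $\G(u)\setminus\{v\}$ and $\G(v)\setminus\{u\}$, while the absence of triangles rules out edges within $\G(u)$ or within $\G(v)$ and forces $u,v$ to have no common neighbour. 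Consequently $\G\mid_{X(\{u,v\})}$ is a \emph{double star}: its only edges are the central edge $\{u,v\}$ together with the pendant edges $\{u,a\}$ for $a\in\G(u)\setminus\{v\}$ and $\{v,b\}$ for $b\in\G(v)\setminus\{u\}$, and the leaf sets at $u$ and at $v$ are disjoint.

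For part~$(i)$, I would identify the element $v$ of $\st(v)$ with the centre of $\G\mid_{X(v)}$ and each edge $\{v,w\}\in\st(v)$ with the corresponding leaf $w$. Since $\G\mid_{X(v)}$ is a star, a centre-preserving bijection on its vertices is automatically a graph isomorphism, so under this identification a star isomorphism $\varphi$ is precisely a graph isomorphism $\varphi_1\colon\G\mid_{X(v_1)}\to\G\mid_{X(v_2)}$. The condition in Definition~\ref{def:star trans}$(i)$ that some $\psi\in\Aut(\G)$ satisfy $\psi(v_1)=\varphi(v_1)$ and $\psi(f)=\varphi(f)$ for every edge $f$ incident to $v_1$ is then exactly the statement that $\psi$ restricts to $\varphi_1$ on $X(v_1)$, i.e.\ that $\varphi_1$ extends to an automorphism of $\G$. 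Taking $G=\Aut(\G)$ and quantifying over all star isomorphisms gives the claimed equivalence.

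For part~$(ii)$ the same idea applies, with the double-star structure above replacing the star. Here I would identify $u,v$ with the two centres, each pendant edge $\{u,a\}$ or $\{v,b\}$ with its leaf, and the edge $\{u,v\}\in\st(\{u,v\})$ with the central edge of $\G\mid_{X(\{u,v\})}$. Conditions~$(i)$ and $(ii)$ of Definition~\ref{def:star and iso} say exactly that an edge-star isomorphism sends $\{u_1,v_1\}$ to $\{u_2,v_2\}$ as a pair of centres and preserves, for each pendant edge, the centre to which it is attached; under the identification this is precisely a graph isomorphism $\varphi_2\colon\G\mid_{X(\{u_1,v_1\})}\to\G\mid_{X(\{u_2,v_2\})}$ respecting the central edge (conversely, any graph isomorphism of the double stars sends centres to centres and hence the central edge to the central edge, so yields an edge-star isomorphism). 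As in part~$(i)$, the extension condition in Definition~\ref{def:star trans}$(ii)$ with $G=\Aut(\G)$ then reads as $\varphi_2$ extending to an automorphism of $\G$, giving the equivalence.

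The proof is essentially bookkeeping, and the only point requiring care is the double-star analysis in part~$(ii)$: one must check that the girth-$5$ hypothesis excludes \emph{every} potential chord among $X(\{u,v\})$, and that the combinatorial data of the edge-star---both vertices, all incident edges, and the incidence conditions $(i)$--$(ii)$---is faithfully and bijectively encoded by the induced subgraph, including the correct treatment of the distinguished central edge $\{u,v\}$.
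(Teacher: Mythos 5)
Your proposal is correct, and it fills in exactly the argument the paper leaves implicit: the paper offers no proof, stating only that the result is ``immediate,'' and the intended reasoning is precisely your observation that girth $\ge 4$ (resp.\ $\ge 5$) forces $\G\mid_{X(v)}$ to be a star (resp.\ $\G\mid_{X(\{u,v\})}$ a double star with no chords), after which the translation between open (edge-)stars and induced subgraphs is bookkeeping. The only cosmetic caveat is that in degenerate low-valency cases (e.g.\ $K_{1,1}$) a graph isomorphism of stars need not preserve the centre, but this does not affect any of the paper's applications.
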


\subsection{Group-theoretic notation}\label{sec:groups}

For a natural number $k$, we denote by $S_k$ the symmetric group on $k$ letters, by $A_k$ the alternating group on $k$ letters, by $C_k$ the cyclic group of order $k$, and by $D_{2k}$ the dihedral group of order $2k$. The projective special linear group, projective general linear group, and projective semilinear group of dimension $d$ over a field of size $q$ is denoted by $\PSL(d,q)$, $\PGL(d,q)$, and $\PGaL(d,q)$, respectively. Given a group $A$ and a natural number $k$, we denote by $A \Wr S_k$ the following wreath product: let $B$ be the direct product of $k$ copies of $A$.  Then $S_k$ acts naturally on $B$ by permuting the $k$ copies of $A$, and $A \Wr S_k$ is the semidirect product induced by this action. We denote the semidirect product of two groups $A$ and $B$ by $A{:}B$. A group that is a (not necessarily split) extension of a subgroup $A$ by a group $B$ will be denoted by $A.B$. Given a prime $p$, $p^m$ will be used to denote an elementary abelian group of order $p^m$, and we will use $[p^m]$ to denote a group of order $p^m$ when we do not wish to specify the isomorphism type.  The \emph{socle} of a group $G$ is the subgroup generated by all the minimal normal subgroups of $G$, and is denoted by $\soc(G)$. When $n\geq 5$ or $n=3$, we have that $\soc(S_n)=A_n$.

We refer to a triple of groups $(A,B,A\cap B)$ as an \emph{amalgam}. A \emph{completion} of the amalgam $(A,B,A\cap B)$ is a group $G$ together with group homomorphisms $\phi_1:A\rightarrow G$ and $\phi_2:B\rightarrow G$ such that $\phi_1$ and $\phi_2$ are one-to-one, $G=\langle \phi_1(A),\phi_2(B)\rangle$ and $\phi_1(A)\cap\phi_2(B)=\phi_1(A\cap B)=\phi_2(A\cap B)$.

\section{Graphs with small girth or with small minimal valency}
\label{s:small}

In this section, we characterise star-transitive and st(edge)-transitive graphs of girth at most four, so that in the rest of the paper we can concentrate on the case $\girth(\G) \ge 5$ and use the simplified description of stars and edge-stars, as given in Proposition~\ref{prop:equiv}. We also characterise star-transitive and st(edge)-transitive graphs of minimal valency one or two. 

\begin{lemma}
\label{lem:girth 3}
The only connected star-transitive graphs of girth $3$ are the complete graphs $K_n$, for some $n \ge 3$. The only connected st(edge)-transitive graph of girth $3$ is the triangle $K_3$. The only connected st(edge)-transitive graphs of girth $4$ are the complete bipartite graphs $K_{m,n}$, for some $m,n \ge 2$. 
\end{lemma}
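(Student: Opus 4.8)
The plan is to prove each of the three assertions by combining the girth restriction with the combinatorial meaning of the star and edge-star isomorphisms given in Definitions~\ref{def:star and iso} and \ref{def:star trans}. Throughout, the guiding principle is that in low girth the star isomorphisms and edge-star isomorphisms are \emph{not} required to respect adjacencies among the neighbours of the central vertex (or edge), so the extension condition forces $\Aut(\G)$ to be very large, which in turn forces $\G$ itself to be complete or complete bipartite.

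For the girth~$3$ star-transitive case, I would first show $\G$ is regular of some valency $r$, since star-transitivity implies vertex-transitivity (any two open stars $\st(v_1),\st(v_2)$ admit a star isomorphism when $v_1,v_2$ have the same valency, and one checks vertices must have equal valency using connectivity and the extension property). Fix a vertex $v$ with neighbours $\G(v)$. A star isomorphism $\st(v)\to\st(v)$ fixing $v$ may permute the incident edges \emph{arbitrarily}, since no adjacency among the neighbours is recorded in $\st(v)$; star-transitivity then says every such permutation extends to an automorphism, so $G_v^{\G(v)}=\Sym(\G(v))$, i.e.\ $\G$ is locally fully symmetric. Now I would argue that local full symmetry together with girth exactly $3$ forces $\G=K_n$: pick an edge $\{v,w\}$ lying in a triangle, so $w$ and some $x$ are adjacent neighbours of $v$; since $G_v$ induces the full symmetric group on $\G(v)$, every pair of neighbours of $v$ must be adjacent (the automorphism carrying the arc into the triangle configuration preserves adjacency), so $\G(v)\cup\{v\}$ is a clique; connectivity and vertex-transitivity then propagate this to all of $\G$, giving a complete graph. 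Conversely every $K_n$ is readily star-transitive. The main obstacle here is the careful bookkeeping that the extending automorphism $\psi$, which \emph{does} preserve adjacency, converts an arbitrary permutation of $\G(v)$ into a genuine symmetry, thereby collapsing the neighbourhood into a clique.

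For the girth~$3$ st(edge)-transitive case, the analogous argument is sharper. Fix an edge $\{u,v\}$ contained in a triangle with third vertex $x$. The edge-star isomorphism is allowed to permute the edges at $u$ (other than $\{u,v\}$) and the edges at $v$ separately and arbitrarily, subject only to the incidence-preserving condition~(ii) of Definition~\ref{def:star and iso}; it need not respect which neighbour of $u$ coincides with or is adjacent to which neighbour of $v$. I would exploit this by taking an edge-star isomorphism that moves the common neighbour $x$ off the shared position, and show that st(edge)-transitivity forces an automorphism doing the same, which is impossible unless the only triangle forces $\G=K_3$: concretely, if $r\geq2$ at either endpoint one obtains a contradiction with girth $3$, so $\G$ has no vertices of valency exceeding those in $K_3$, yielding $\G=K_3$.

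For the girth~$4$ st(edge)-transitive case, I would first use the local $2$-transitivity that edge-star-transitivity yields to deduce $\G$ is bipartite of bi-valency $\{m,n\}$ with $m,n\geq2$ (girth $4$ means no triangles and the existence of $4$-cycles). Fixing an edge $\{u,v\}$, the edge-star $\st(\{u,v\})$ records the neighbours $\G(u)$ and $\G(v)$ and their incidences to $u,v$, but \emph{not} which pairs from $\G(u)\times\G(v)$ form the $4$-cycles; an edge-star isomorphism may therefore realise any pairing. The extension property then forces every vertex of $\G(u)$ to be joined to every vertex of $\G(v)\setminus\{u\}$ and vice versa, so the bipartite neighbourhood closes up into a complete bipartite block; connectivity and edge-transitivity propagate this, giving $\G=K_{m,n}$. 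Conversely, $K_{m,n}$ has girth $4$ (for $m,n\geq2$) and one verifies directly that every edge-star isomorphism extends, using that $\Aut(K_{m,n})$ contains $S_m\times S_n$ (and a swap when $m=n$). The hard part throughout is disentangling exactly which data the star and edge-star isomorphisms remember versus forget, and showing that the ``forgotten'' adjacencies are precisely what the extension to an adjacency-preserving automorphism must fabricate, forcing completeness.
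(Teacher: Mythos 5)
Your core argument coincides with the paper's in all three cases: the star or edge-star isomorphisms realise arbitrary permutations of the relevant edge sets precisely because adjacencies among the neighbours are not recorded, and the extending automorphisms, which \emph{do} preserve adjacency, then force every pair (respectively every cross-pair) of neighbours to be adjacent, after which connectivity propagates completeness. However, three auxiliary assertions you make are wrong or unjustified, though all are dispensable. First, ``star-transitivity implies vertex-transitivity'' is false in general: the paper's Lemma~\ref{l:lazarovich} explicitly allows edge-transitive biregular bipartite graphs, and the $1$-subdivisions of Proposition~\ref{min two examples} are star-transitive but not vertex-transitive. Regularity is not needed for the girth-$3$ argument, which runs exactly as you describe without it (the clique structure of $\{v\}\cup\G(v)$ propagates to neighbours by reapplying star-transitivity at each $x\in\G(v)$, no vertex-transitivity required). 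Second, in the girth-$3$ st(edge)-transitive case the contradiction requires a vertex of valency at least $3$, not ``$r\ge 2$'' as you write --- $K_3$ itself has valency $2$ everywhere, so your stated bound would exclude the one graph you are trying to keep; the paper's case division is on valency greater than $2$. Third, girth $4$ together with local $2$-transitivity does not by itself yield bipartiteness, since girth $4$ only excludes triangles, not longer odd cycles; the paper instead derives $\VG=\G(u)\cup\G(v)$ directly from the cross-adjacency argument (showing $\G(w_1)=\G(u)$ for all $w_1\in\G(v)$ by running the argument twice), and bipartiteness falls out at the end rather than being assumed at the start. With these repairs your proof is essentially the paper's proof.
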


\begin{proof}
Let $\{ u,v,w \}$ be a cycle of length $3$ in $\G$, and suppose that $\G$ is star-transitive. Considering the extensions of all star isomorphisms $\varphi: \st(v) \to \st(v)$ to 
automorphisms of $\G$, we obtain that any two vertices in $\G(v)$ are adjacent. Hence, for any $x \in \G(v)$, $x$ is contained in a cycle of length $3$, and by the same argument as above
all neighbours of $x$ are adjacent. In particular, all $y \in \G(x) \setminus \{ v\}$ are adjacent to $v$, and so $\{ v \} \cup \G(v) = \{ x\} \cup \G(x)$. 
As $\G$ is connected, we obtain that $\VG = \{ v \} \cup \G(v)$ and $\G$ is a complete graph.

Suppose now that  $\{ u,v,w \}$ is a cycle of length $3$ in $\G$, and that $\G$ is st(edge)-transitive. If the vertex $u$ has valency greater than $2$ then there exists an edge-star isomorphism 
$\varphi: \st(\{u,v\}) \to \st(\{u,v\}) $ such that $\varphi(u)=u$, $\varphi(v)=v$, $\varphi(\{u,w\}) \ne \{ u,w\}$, and  $\varphi(\{v,w\}) = \{ v,w\}$. However, $\varphi$ cannot be
extended to an automorphism of $\G$, a contradiction. Similarly, $v$ and $w$ also must be of valency $2$ and so, as $\G$ is connected, $\VG = \{ u,v,w \}$.

Finally, suppose that $\girth(\G)=4$, $\G$ is st(edge)-transitive, and let $\{ u,v,w,z \}$ be a $4$-cycle. Considering the extensions of all edge-star isomorphisms $\varphi: \st(\{u,v\}) \to \st(\{ u,v\})$ that fix $u$ and $v$, we obtain
that, as an image of the edge $\{ w,z\}$, every pair $\{ w_1,z_1 \}$ with 
$w_1 \in \G(v)$ and $z_1 \in \G(u)$ is in $\EG$. Therefore, for every 
$w_1 \in \G(v)$ we have $\G(w_1) \supseteq \G(u)$. Repeating the same argument
with edge-star isomorphisms $\varphi: \st(\{w_1,v\}) \to \st(\{ w_1,v\})$ and 
a four-cycle containing $\{ w_1,z \}$, we obtain that for every 
$w_2 \in \G(v)$ we have $\G(w_2) \supseteq \G(w_1)$. In particular, for $w_2=u$,
$\G(w_1)= \G(u)$. As $w_1$ was an arbitrary element of $\G(v)$, all vertices in $\G(v)$ have the same neighbours. Similarly, all vertices in $\G(u)$ have the same neighbours, $\VG= \G(u) \cup \G(v)$, and $\G$ is a complete bipartite graph. 
\end{proof}

Next, we characterise the star-transitive and st(edge)-transitive graphs with a vertex of valency one. For $n \ge 3$, we define the {\em spider graph $T_n$} as a graph with $2n+1$ vertices  
$V(T_n)=\{ x,y_1,\ldots,y_n,z_1,\ldots,z_n \}$ and $2n$ edges 
$E(T_n)= \{ \{ x,y_i \}, \{ y_i,z_i \} \mid 1 \le i \le n \}$. 

\begin{lemma}
\label{lem:valency one}
The only connected star-transitive graphs with a vertex of valency one are the
complete bipartite graphs $K_{1,n}$, for some $n \ge 1$. 
The only connected st(edge)-transitive graphs with a vertex of valency one are: the complete bipartite graphs $K_{1,n}$, for some $n \ge 1$; the path $P_4$ with four vertices; and the spider graphs $T_n$, for some $n \ge 3$. 
\end{lemma}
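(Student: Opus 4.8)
The plan is to prove each statement as a biconditional. For the existence direction, $K_{1,n}$, $P_4$ and every $T_n$ is a tree, so each has girth at least $5$ and Proposition~\ref{prop:equiv} applies; the (edge-)star isomorphisms are then exactly those realised by permuting the leaves (respectively the legs), and all extend to automorphisms. For the star-transitive converse no girth reduction is needed: if $v$ has valency one with unique neighbour $u$ and $r:=|\G(u)|$, then every bijection of $\st(u)$ fixing $u$ is a star isomorphism (adjacencies among the neighbours of $u$ are irrelevant to the definition), so star-transitivity forces $\Aut(\G)_u$ to induce the full symmetric group on $\G(u)$; hence $\G(u)$ is a single orbit, every neighbour of $u$ has the valency of $v$, namely one, and connectivity gives $\VG=\{u\}\cup\G(u)$, so $\G=K_{1,r}$ (with $r=1$ allowed).

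For the st(edge)-transitive converse I first reduce the girth. By Lemma~\ref{lem:girth 3} a connected st(edge)-transitive graph of girth $3$ or $4$ is $K_3$ or a $K_{m,n}$ with $m,n\ge 2$, none of which has a vertex of valency one, so $\girth(\G)\ge 5$ and by Proposition~\ref{prop:equiv} every edge-star isomorphism is an isomorphism of the induced ``double-star'' $\G\mid_{X(\{u,v\})}$, which the girth bound forces to be a tree. With $v,u,r$ as above, the edge-star isomorphisms of $\st(\{u,v\})$ fixing $u$ and $v$ (this is forced when $r\ge 2$, since $v$ has valency one) permute the other neighbours $w_1,\dots,w_{r-1}$ of $u$ arbitrarily, so these are pairwise equivalent and share a common valency $s$. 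If $s=1$ then all neighbours of $u$ have valency one and $\G=K_{1,r}$, so I assume $s\ge 2$.

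The decisive move is a repeated ``swap''. At the edge $\{u,w_1\}$ the edge-star isomorphism fixing $u,w_1$ that interchanges $v$ with $w_2$ would extend to an automorphism taking the valency-one vertex $v$ to the valency-$s$ vertex $w_2$, which is impossible; this swap exists once $r\ge 3$, so $r=2$, and $v$ lies at the end of a path $v-u-c$ with $\deg u=2$ and $\deg c=s$. If $s=2$, then $c$ has a single further neighbour $c_1$ and the reflection of the induced path $v-u-c-c_1$ is an edge-star isomorphism of $\st(\{u,c\})$; extending it carries $v$ to $c_1$, so $\deg c_1=1$ and $\G=P_4$ (this is precisely what excludes $P_5$). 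If $s\ge 3$, then at the edge $\{c,c_1\}$ the swap interchanging $u$ with $c_2$ extends to an automorphism $\psi$ with $\psi(u)=c_2$, whence $\deg c_2=2$ and, since $\psi(c)=c$, the valency-one neighbour $v$ of $u$ is carried to the non-$c$ neighbour of $c_2$, which therefore has valency one. As the neighbours of $c$ form a single orbit, each is a valency-two vertex carrying a pendant valency-one vertex, and by connectivity and the valency count $\G=T_s$.

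The step needing the most care, and which I expect to be the main obstacle, is running this swap argument cleanly: each application extends an edge-star isomorphism that merely interchanges two non-shared neighbours of one endpoint of an edge, and then extracts a valency equation from the fact that a graph automorphism preserves valency. The delicate points are verifying that each such interchange genuinely satisfies conditions (i)--(ii) of Definition~\ref{def:star and iso} (which becomes transparent once $\girth(\G)\ge 5$ makes the local picture a tree with no accidental adjacencies among the outer neighbours), tracking exactly when a distinct second neighbour is available so the swap is nontrivial (the thresholds $r\ge 3$ and $s\ge 3$), and correctly separating the boundary case $s=2$, giving $P_4$, from $s\ge 3$, giving $T_s$.
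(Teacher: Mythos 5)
Your proposal is correct and follows essentially the same route as the paper: a case analysis on the valency $r$ of the unique neighbour $u$ (and, when $r=2$, on the valency $s$ of the next vertex), using edge-star isomorphisms that fix one endpoint of an edge and swap two of the remaining neighbours, then reading off valency equalities from the extending automorphism. The only spot to tighten is your final claim that \emph{all} neighbours of $c$ form a single orbit: the swaps at $\st(\{c,c_1\})$ only show $u,c_2,\dots,c_{s-1}$ are equivalent, so (as the paper does) you need one further swap at $\st(\{c,c_2\})$ to bring $c_1$ into the same orbit before concluding $\G\cong T_s$.
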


\begin{proof}
Let $\G$ be a simple graph, let $v \in \VG$ have valency one, and let $u$ be the unique neighbour of $v$. If $\G$ is star-transitive then, considering the star-isomorphisms $\varphi: \st(u) \to \st(u)$ mapping $v$ to other neighbours of $u$, we obtain that all neighbours of $u$ have valency one. As $\G$ is connected, we obtain $\G \cong K_{1,n}$, where $n$ is the valency of $u$. 

Suppose now that $\G$ is st(edge)-transitive. We distinguish three cases, according to the valency of $u$. If $u$ has valency at least three then let $w,z$ be neighbours of $u$ that are different from $v$. Considering edge-star isomorphisms $\varphi: \st(\{u,w\}) \to \st(\{u,w\})$ that fix $u$ and map $v$ to neighbours of $u$ different from $w$, we obtain that all neighbours of $u$ different from $w$ have valency one. Repeating the same process with edge-star isomorphisms 
$\varphi: \st(\{u,z\}) \to \st(\{u,z\})$, we deduce that $w$ also has valency one and $\G \cong K_{1,n}$, where $n$ is the valency of $u$.

If $u$ has valency one then $\G \cong K_{1,1}$. If $u$ has valency two then let $x$ be the neighbour of $u$ different from $v$. We distinguish three subcases, according to the valency of $x$. If $x$ has valency one then $\G \cong K_{1,2}$. If $x$ has valency two then, from the edge-star isomorphism 
$\varphi: \st(\{u,x\}) \to \st(\{u,x\})$ that exchanges $u$ and $x$, we obtain that $\G \cong P_4$. Finally, if the valency of $x$ is at least three then let 
$w,z$ be neighbours of $x$ that are different from $u$. Considering edge-star isomorphisms $\varphi: \st(\{x,w\}) \to \st(\{x,w\})$ that fix $x$ and map $u$ to neighbours of $x$ different from $w$, we obtain that all neighbours of $x$ different from $w$ have valency two and they are adjacent to a vertex of valency one. 
Repeating the argument with edge-star isomorphisms $\varphi: \st(\{x,z\}) \to \st(\{x,z\})$, we see that the neighbour $w$ also has this property and so $\G \cong T_n$, where $n$ is the valency of $x$. 
\end{proof}

Finally, we handle the case of minimal valency two. For any $n \ge 3$, the cycle $C_n$ is $2$-regular, star-transitive, and st(edge)-transitive. We obtain further examples by the following constructions. 

Let $\Sigma$ be a simple graph of minimal valency at least three. We construct
the {\em $1$-subdivision} $\G$ of $\Sigma$ by replacing each edge by a path of length two. Formally, we define $\VG=V(\Sigma) \cup E(\Sigma)$. The sets 
$V(\Sigma)$ and $ E(\Sigma)$ are independent in $\G$, and $v \in V(\Sigma)$
is connected to $e \in E(\Sigma)$ in $\G$ if and only if $v$ and $e$ are
incident in $\Sigma$.
Similarly, we construct the {\em $2$-subdivision} of $\Sigma$ by replacing each edge by a path of length three. The following proposition is easy to verify.

\begin{prop}
\label{min two examples}
Let  $\Sigma$ be an arc-transitive graph of minimal valency at least three which is locally fully symmetric. Then the $1$-subdivision of $\Sigma$ is both star-transitive and st(edge)-transitive. The $2$-subdivision of $\Sigma$ is st(edge)-transitive, but not star-transitive. 
%
\end{prop}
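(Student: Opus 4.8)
The plan is to exploit the fact that every automorphism of $\Sigma$ lifts canonically to an automorphism of each subdivision, giving embeddings $\Aut(\Sigma)\hookrightarrow\Aut(\text{$1$-subdivision})$ and $\Aut(\Sigma)\hookrightarrow\Aut(\text{$2$-subdivision})$, so that the arc-transitivity and local full symmetry of $\Sigma$ supply a large automorphism group of the subdivided graphs. First I would record that, since $\Sigma$ is finite with minimal valency at least three, it contains a cycle, so $\girth(\Sigma)=g\ge 3$; subdividing each cycle then shows the $1$-subdivision has girth $2g\ge 6$ and the $2$-subdivision has girth $3g\ge 9$. In particular both subdivisions have girth at least $5$, so by Proposition~\ref{prop:equiv} I may treat every star isomorphism and edge-star isomorphism simply as an isomorphism of the induced subgraph on the relevant ball, and it then suffices to extend each such isomorphism to a graph automorphism.

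For the $1$-subdivision I would argue by the type of vertex. The induced subgraph on $X(v)$ is a star $K_{1,r}$ for an original vertex $v$ of valency $r$, and a path $K_{1,2}$ for an edge-vertex. Using vertex-transitivity (a consequence of arc-transitivity) I reduce to the case of a fixed centre. A star isomorphism centred at an original vertex $v$ then induces an arbitrary permutation of the edge-vertices incident to $v$, which correspond bijectively to $\Sigma(v)$, and local full symmetry of $\Sigma$ realises every such permutation by an element of $\Aut(\Sigma)_v$. A star isomorphism centred at an edge-vertex $e=\{u,v\}$ either fixes or interchanges the two original endpoints, and arc-transitivity of $\Sigma$ (transitivity on ordered pairs of adjacent vertices) realises both after lifting. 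For st(edge)-transitivity of the $1$-subdivision, every edge joins an original vertex $v$ to an incident edge-vertex $e=\{v,w\}$, and the induced subgraph on $X(\{v,e\})$ is a star at $v$ with one leaf $e$ carrying an additional pendant $w$. Any edge-star isomorphism must send $v$ to an original vertex, $e$ to an edge-vertex and $w$ to the second endpoint; I first apply a lift of an automorphism of $\Sigma$ carrying the arc from $v$ through $w$ to the target arc, and then use that $\Aut(\Sigma)_{vw}$ induces the full symmetric group on $\Sigma(v)\setminus\{w\}$ (it is the full preimage of a point stabiliser under the surjection $\Aut(\Sigma)_v\to\Sym(\Sigma(v))$ given by local full symmetry) to permute the remaining leaves.

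The same two ingredients handle st(edge)-transitivity of the $2$-subdivision. There the edges fall into two shapes: the ``middle'' edges of a subdivided edge $v\!-\!a\!-\!b\!-\!w$, whose edge-star is a path $P_4$ and is handled by arc-transitivity (which reverses the $P_4$ as needed); and the ``outer'' edges, whose edge-star is again a broom (a star at $v$ with one extended leaf), handled exactly as in the $1$-subdivision. In each case the relevant edge-star isomorphism is realised by a lift of an element of $\Aut(\Sigma)$ followed by a permutation of the unextended leaves coming from local full symmetry.

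The one genuinely different point is the failure of star-transitivity for the $2$-subdivision, and here I would use precisely the subtlety flagged after Definition~\ref{def:star trans}. At a degree-two vertex $a$ of the $2$-subdivision lying on a subdivided edge $v\!-\!a\!-\!b\!-\!w$, the induced subgraph on $X(a)$ is the path $v\!-\!a\!-\!b$, in which the two neighbours $v$ and $b$ are indistinguishable; hence the bijection fixing $a$ and swapping $v$ and $b$ is a legitimate star isomorphism. However $v$ has valency at least three in the $2$-subdivision whereas $b$ has valency two, so no graph automorphism can map $v$ to $b$, and this star isomorphism fails to extend. (The analogous swap is harmless in the $1$-subdivision because there both neighbours of an edge-vertex are original vertices.) I expect this last step — explaining why the construction breaks star-transitivity at exactly the second subdivision while leaving st(edge)-transitivity intact — to be the only place requiring care, the remaining verifications being routine applications of arc-transitivity and local full symmetry.
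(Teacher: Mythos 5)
Your proof is correct. The paper offers no argument for this proposition (it is stated as ``easy to verify''), and your write-up supplies exactly the intended verification: lifting automorphisms of $\Sigma$ to the subdivisions, using arc-transitivity together with local full symmetry (via the induced $S_{r-1}$ on $\Sigma(v)\setminus\{w\}$) to realise all star and edge-star isomorphisms, noting that edge-stars of middle and outer edges of the $2$-subdivision have different sizes so no isomorphism between them need be extended, and exhibiting the valency-mismatched swap at a degree-two vertex to defeat star-transitivity of the $2$-subdivision.
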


\begin{lemma}
\label{min two star}
Suppose that $\G$ is star-transitive and the minimal valency in $\G$ is two, but $\G$ is not $2$-regular. Then there exists an arc-transitive graph $\Sigma$ of valency at least three which is locally fully symmetric such that $\G$ is isomorphic to the $1$-subdivision of $\Sigma$.
\end{lemma}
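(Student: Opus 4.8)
The plan is to first distil from star-transitivity the two facts I will use repeatedly. Applying Definition~\ref{def:star trans}(i) to the star isomorphisms $\st(v)\to\st(v)$ that fix $v$ and permute its incident edges arbitrarily shows that $\Aut(\G)_v$ induces the full symmetric group $\Sym(\G(v))$ on $\G(v)$; that is, $\G$ is locally fully symmetric. Applying it instead to a star isomorphism $\st(v_1)\to\st(v_2)$ between two vertices of equal valency shows that any two vertices of the same valency lie in a single $\Aut(\G)$-orbit. The first fact has the crucial consequence that \emph{all neighbours of a given vertex have the same valency}, since $\Aut(\G)_v$ permutes $\G(v)$ (indeed fully symmetrically) and valency is preserved by automorphisms.

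Next I would pin down which valencies occur. Fix a vertex $v$ of valency two (one exists since the minimal valency is two), with neighbours $a,b$; by the above, $a$ and $b$ share a common valency $r$. If $r=2$, then repeatedly using that the neighbours of a valency-two vertex again have valency two, together with connectedness of $\G$, forces every vertex to have valency two, contradicting that $\G$ is not $2$-regular; hence $r\ge 3$. Now $a$ has valency $r\ge3$ and has the valency-two vertex $v$ among its neighbours, so by local full symmetry \emph{every} neighbour of $a$ has valency two. Alternating these two observations and using connectedness, I would conclude that $\G$ is bipartite with parts $A$ (the valency-two vertices) and $B$ (the valency-$r$ vertices), every edge joining $A$ to $B$; in particular each vertex of $A$ has exactly two neighbours, both in $B$.

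With this structure in hand I would build $\Sigma$: take $V(\Sigma)=B$ and, for each $v\in A$ with neighbours $a,b\in B$, declare $\{a,b\}$ to be an edge of $\Sigma$. Since each $B$-vertex has $\G$-valency $r$, the graph $\Sigma$ is $r$-regular with $r\ge3$, and by construction $\G$ is recovered from $\Sigma$ by subdividing each edge with the corresponding valency-two vertex. Because $\Aut(\G)$ preserves valencies it preserves the partition $\{A,B\}$ and hence induces a homomorphism $\Aut(\G)\to\Aut(\Sigma)$. Arc-transitivity of $\Sigma$ then follows because the arcs of $\Sigma$ correspond to the $2$-arcs $(a,v,b)$ of $\G$ centred at valency-two vertices: same-valency transitivity carries any such $v$ to any other, after which the full action $\Sym(\G(v))=S_2$ matches up the two ends.

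The step that requires genuine care — and which I expect to be the main obstacle — is showing that $\Sigma$ is a \emph{simple} graph, equivalently that no two distinct valency-two vertices share the same pair of neighbours. Suppose $v,v'\in A$ both have neighbours $\{a,b\}$. Using that $\Aut(\G)_a$ acts as the full symmetric group on $\G(a)$, for any $c\in\G(a)$ there is an automorphism fixing $a$ and $v$ and sending $v'$ to $c$; since it fixes $v$ it fixes $\G(v)=\{a,b\}$, hence fixes $b$, which forces $c$ to be adjacent to $b$. Thus every neighbour of $a$ is a neighbour of $b$, giving $\G(a)=\G(b)$ and $\G\cong K_{2,r}$. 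So either $\Sigma$ is simple, or $\G\cong K_{2,r}$. Once $\Sigma$ is simple, the map $\G(a)\to\Sigma(a)$ sending a valency-two neighbour of $a$ to its other endpoint is a bijection, and transporting the $\Sym(\G(a))$-action through it shows that $\Sigma$ is locally fully symmetric, completing the identification of $\G$ with the $1$-subdivision of $\Sigma$. The degenerate graph $K_{2,r}$ is precisely the $1$-subdivision of the two-vertex multigraph with $r$ parallel edges, and I would make sure the statement is read so as to accommodate (or explicitly exclude) this case.
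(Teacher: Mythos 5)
Your argument is correct and ends up at the same structural picture as the paper's proof, but it gets there by a somewhat different route and is more careful on one point where the paper is actually too quick. The paper obtains the bipartite biregular structure in one stroke by citing Lazarovich's result (Lemma \ref{l:lazarovich}): star-transitivity of a non-regular graph forces edge-transitivity, so every edge joins a valency-$2$ vertex to a valency-$k$ vertex and $\G$ is a $1$-subdivision; it then reads off local full symmetry of $\Sigma$ from star isomorphisms at the valency-$k$ vertices and arc-transitivity from star isomorphisms at the valency-$2$ vertices, exactly as you do. You instead derive the bipartition self-containedly from local full symmetry (all neighbours of a vertex share a common valency, and these valencies alternate), which is equally valid and avoids the external citation. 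The genuinely substantive difference is your treatment of the simplicity of $\Sigma$: the paper silently assumes that distinct valency-$2$ vertices have distinct neighbour pairs, and your analysis showing that the only way this can fail is $\G\cong K_{2,r}$ is not pedantry but a real correction. Indeed $K_{2,r}$ with $r\ge 3$ is star-transitive (e.g.\ by Lemma \ref{l:suff star}(\ref{suff2})), has minimal valency two, is not $2$-regular, and is not the $1$-subdivision of any \emph{simple} graph of valency at least three, so it is an exception to the lemma as literally stated; the statement must either be read with multigraphs allowed or be amended to exclude this case, as you say.
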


\begin{proof}
Since $\Gamma$ is not 2-regular, there exists $\{ v,w \} \in \EG$ with $v$ of valency $k>2$ and $w$ with valency $2$. Lazarovich~\cite[Lemma 1.1]{L} proved that this implies $\G$ is edge-transitive; consequently, all edges of $\G$ connect valency $2$ vertices with vertices of valency $k$. Hence $\G$ is bipartite and $\G$ is a $1$-subdivision of a graph $\Sigma$ with minimal valency at least $3$.

Automorphisms of $\G$, restricted to the vertices of valency $k$, naturally define automorphisms of $\Sigma$. Star isomorphisms $\st(v) \to \st(v)$, with $v \in \VG$ and $v$ of valency $k$, show that $\Sigma$ is locally fully symmetric, and consequently $\Sigma$ is edge-transitive. Finally, star isomorphisms $\st(w) \to \st(w)$ of $\G$, with $w \in \VG$ of valency two, show that edges in $\Sigma$ can be turned around by automorphisms, and so $\Sigma$ is arc-transitive.  
\end{proof}

\begin{lemma}
\label{min two edge}
Suppose that $\G$ is st(edge)-transitive, the minimal valency in $\G$ is two, but $\G$ is not $2$-regular. Then there exists an edge-transitive graph $\Sigma$ of minimal valency at least three which is locally fully symmetric such that one of the following holds. 
\begin{itemize}
\item[$(1)$] $\Sigma$ is non-regular and $\G$ is isomorphic to the $1$-subdivision of $\Sigma$.
\item[$(2)$] $\Sigma$ is arc-transitive, and $\G$ is isomorphic to the $1$- or $2$-subdivision of $\Sigma$.
\end{itemize}
\end{lemma}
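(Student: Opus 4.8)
The plan is to follow the strategy of Lemma~\ref{min two star}, but with the extra care needed to separate the $1$- and $2$-subdivision cases. Throughout write $G=\Aut(\G)$; since $\G$ is connected, has minimal valency two and is not $2$-regular, there is at least one vertex of valency two and at least one vertex of valency $\ge 3$. The first step is a local homogeneity statement: \emph{every vertex of valency at least three has all of its neighbours of valency exactly two}, so in particular no edge joins two vertices of valency $\ge 3$. To see that the neighbours of such a vertex $u$ have a common valency, fix a neighbour $v$ and note that any permutation of the edges incident to $u$ other than $\{u,v\}$ (fixing the central edge and everything at $v$) is an edge-star isomorphism $\st(\{u,v\})\to\st(\{u,v\})$. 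By st(edge)-transitivity it extends to an automorphism fixing $u$ and $v$ and realising that permutation on $\G(u)\setminus\{v\}$; as automorphisms preserve valency, any two vertices of $\G(u)\setminus\{v\}$ have equal valency, and letting $v$ range over $\G(u)$ (possible since $\deg u\ge 3$) gives all of $\G(u)$ a common valency $m$. If $m\ge 3$ then propagating this alternation along $\G$ and using connectedness forces every valency to lie in a two-element set of integers $\ge 3$, contradicting the existence of a valency-two vertex; hence $m=2$.

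Consequently every valency-two vertex has a neighbour of valency $\ge 3$, and $\G$ is obtained from a graph on its valency-$\ge 3$ vertices by subdividing edges. The crux of the argument — and the step I expect to be the main obstacle — is to pin down the common length of these subdivisions. The key invariant is: for a vertex $a$ of valency $k\ge 3$ with a valency-two neighbour $w$, let $b$ be the other neighbour of $w$; applying st(edge)-transitivity to the edges with endpoint valencies $2$ and $k$ (such an isomorphism must send $a$ to a valency-$k$ vertex and $w$ to a valency-two vertex, hence carries the unique leaf edge at $w$ to that at its image) shows $\deg b$ depends only on $k$, so we may write $\deg b = f(k)$. If $f(k)\ge 3$ the subdivided edge through $w$ is $a-w-b$. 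If $f(k)=2$ then, writing $x$ for the other neighbour of $b$, the edge $\{w,b\}$ has both endpoints of valency two, and the edge-star isomorphism swapping $w$ and $b$ extends to an automorphism sending $a$ to $x$; thus $\deg x=k$ and the subdivided edge is $a-w-b-x$, joining two valency-$k$ vertices. In either case each subdivided edge has one or two internal vertices. A connectedness argument then forces uniformity: if $f(k)=2$ for some $k$, every subdivided edge at a valency-$k$ vertex has two internal vertices and leads to another valency-$k$ vertex, so the whole graph consists of valency-$k$ and valency-two vertices and $\G$ is the $2$-subdivision of a $k$-regular graph; otherwise every $f(k)\ge 3$ and $\G$ is a $1$-subdivision.

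Finally, define $\Sigma$ to have vertex set the valency-$\ge 3$ vertices of $\G$, with one edge for each subdivided edge; then $\G$ is the $1$- or $2$-subdivision of $\Sigma$ according to the dichotomy above. Restricting automorphisms of $\G$ to the valency-$\ge 3$ vertices realises them as automorphisms of $\Sigma$. The leaf-permuting edge-star isomorphisms of the first step show $\Sigma$ is locally fully symmetric; st(edge)-transitivity gives $G$ a single orbit on the edges of $\Sigma$ — using the $\{2,2\}$-edges in the $2$-subdivision case and the valency-two vertices in the $1$-subdivision case — so $\Sigma$ is edge-transitive; $\Sigma$ clearly has minimal valency $\ge 3$, and edge-transitivity together with connectedness rules out loops. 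For the dichotomy: if $\Sigma$ is non-regular then a $2$-subdivision is impossible, since the centre-swap of a $\{2,2\}$-edge would send a vertex of valency $\ge 3$ to one of a different valency; hence $\G$ is a $1$-subdivision and $\Sigma$, being edge-transitive and non-regular, is not vertex-transitive, giving~(1). If $\Sigma$ is regular then in each case one builds an automorphism reversing a prescribed subdivided edge $p-\cdots-q$ — by swapping the two centres of the middle $\{2,2\}$-edge when $t=2$, and by mapping $\st(\{w,p\})$ to $\st(\{w,q\})$ while fixing the internal vertex $w$ when $t=1$ — so that, with edge-transitivity, $\Sigma$ is arc-transitive, giving~(2). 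The outstanding routine points are the verification that $\Sigma$ has no multiple edges and that the reconstruction of $\G$ as the subdivision of $\Sigma$ is the expected one.
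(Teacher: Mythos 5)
Your proposal is correct and follows essentially the same route as the paper: edge-star self-isomorphisms permuting the non-central edges show each valency-$\ge 3$ vertex has all neighbours of valency two, the swap of two adjacent valency-two vertices bounds the subdivision length by two and forces uniformity, and the same two edge-star isomorphisms (centre-swap for $t=2$, $\st(\{w,p\})\to\st(\{w,q\})$ for $t=1$ regular) yield arc-transitivity of $\Sigma$. The only quibble is presentational: you assert ``every valency-two vertex has a neighbour of valency $\ge 3$'' before your ``key invariant'' paragraph actually justifies it (the paper proves the corresponding claim about runs of valency-two vertices up front), and the simplicity of $\Sigma$ is left unverified in both your argument and the paper's.
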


\begin{proof}
We claim that there are no three vertices $u,v,w$, all of valency two, such that $\{ u,v\} \in \EG$ and $\{ v,w\} \in \EG$. Indeed, if $u$, $v$, $w$ are such vertices then there is a unique path in $\G$ starting with the edge $\{ u,v\}$, consisting of vertices of valency two, such that the endpoint $x$ of the path has a neighbour $z$ of valency greater than two. Then, for the last two vertices $x$ and $y$ of this path, the edge-star isomorphism $\varphi: \st(\{ x,y \}) \to \st(\{ x,y \})$ that exchanges $x$ and $y$ has no extension to an automorphism of $\G$, a contradiction.

Let $\{ v,w \} \in \EG$ with $v$ of valency greater than $2$ and $w$ with valency $2$, let $x,y$ be two further neighbours of $v$, and let $m$ be the maximal number of vertices on a path starting at $w$ and consisting of vertices of valency $2$. By the claim in the previous paragraph, $m \in \{ 1,2\}$. Considering the edge-star isomorphisms 
\begin{equation}
\label{eq:xyv}
\st(\{ x,v \}) \to \st(\{ x,v \}) \mbox{ and } 
\st(\{ y,v \}) \to \st(\{ y,v \})
\end{equation}
that fix the vertex $v$, we obtain that
all neighbours of $v$ have valency $2$ and for each neighbour $z$, the maximal length of a path starting at $z$ and consisting of vertices of valency $2$ is $m$. Then, by induction on the distance from $v$, we get that all vertices $v'$ of valency greater than $2$ have this property, and so $\G$ is the $m$-subdivision of a graph $\Sigma$ of minimal valency at least $3$.

Automorphisms of $\G$, restricted to the vertices of valency greater than two, naturally define automorphisms of $\Sigma$. 
The edge-star isomorphisms in \eqref{eq:xyv} show that $\Sigma$ is locally fully symmetric, and consequently $\Sigma$ is edge-transitive. If $m=2$ and $vwab$ is a path in $\G$ connecting the vertices $v,b$ of valency greater than $2$ then the 
edge-star isomorphism  $\st(\{ w,a \}) \to \st(\{ w,a \})$ that exchanges $w$ and $a$ shows that $\Sigma$ is arc-transitive and we are in case $(2)$ of the lemma. If $m=1$ and $\Sigma$ is regular then let $vwb$ a path in $\G$ connecting 
the vertices $v,b$ of valency greater than $2$. The edge-star isomorphism  $\st(\{ w,v \}) \to \st(\{ w,b\})$ shows that $\Sigma$ is arc-transitive, and again we are in case $(2)$. Finally, if $\Sigma$ is non-regular then we are in case $(1)$.
\end{proof}

Combining the results of this section, we obtain Theorem~\ref{thm:small valency}.

\section{Connections among star-transitivity, st(edge)-transitivity, and arc-transitivity}\label{s:observations}

This section contains preliminary results used for the proofs of Theorem \ref{v-star-st(edge)-t} and \ref{vtx-intrans}.
We begin by recording the following result of Lazarovich \cite[Lemma 1.1]{L}:

\begin{lemma}\label{l:lazarovich} If $\G$ is a connected star-transitive graph then either:
\begin{enumerate}
\item\label{c:vt} $\G$ is 2-arc-transitive; or
\item\label{c:et} $\G$ is edge-transitive and bipartite, with $\VG = A_1 \sqcup A_2$, and there exist $d_1, d_2 \in \N$ so that for all $v \in A_i$, the vertex $v$ has valency $d_i$ ($i = 1,2$).
\end{enumerate}
\end{lemma}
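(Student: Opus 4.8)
The plan is to extract from star-transitivity two facts about $\G$ --- a strong local symmetry property and a global transitivity dichotomy --- and then to feed these into the standard results on locally transitive graphs recalled in Section~\ref{s:definitions}, with no heavy computation required.

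First I would show that $\G$ is \emph{locally fully symmetric}. The key is to apply the definition of star-transitivity in the special case $v_1 = v_2 = v$: any star isomorphism $\varphi\colon \st(v) \to \st(v)$ fixes $v$ and permutes the edges incident to $v$ arbitrarily, and since these edges are in bijection with the neighbours in $\G(v)$, such a $\varphi$ realises an arbitrary permutation of $\G(v)$. Star-transitivity then guarantees that every such $\varphi$ extends to an automorphism $\psi \in \Aut(\G)$ fixing $v$, whence $\Aut(\G)_v^{\G(v)} = \Sym(\G(v))$. Thus $\G$ is locally fully symmetric, so in particular locally transitive; and for any vertex $v$ of valency at least two, $\Aut(\G)_v^{\G(v)}$ is $2$-transitive, so by the criterion recalled in Section~\ref{s:definitions} the graph $\G$ is locally $2$-arc-transitive.

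Next I would invoke the dichotomy for locally transitive graphs from Section~\ref{s:definitions}: since $\G$ is connected and locally $\Aut(\G)$-transitive, the group $\Aut(\G)$ is transitive on $\EG$, and either $\Aut(\G)$ is transitive on $\VG$, or $\G$ is bipartite with parts $\VG = A_1 \sqcup A_2$ and $\Aut(\G)$ transitive on each $A_i$. In the vertex-transitive case I would combine transitivity on $\VG$ with local $2$-arc-transitivity and use the fact (again from Section~\ref{s:definitions}) that local $(G,s)$-arc-transitivity together with vertex-transitivity upgrades to $(G,s)$-arc-transitivity; with $s=2$ this gives conclusion~(1). In the non-vertex-transitive case, transitivity of $\Aut(\G)$ on each part $A_i$ forces all vertices of $A_i$ to have a common valency $d_i \in \N$, and edge-transitivity is already in hand, yielding conclusion~(2).

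The only delicate point, and hence the step warranting care rather than genuine difficulty, is bookkeeping at small valencies, where ``$2$-transitive'' and ``$2$-arc-transitive'' may be vacuous. Any vertex of valency at least two supplies the local $2$-transitivity that drives conclusion~(1), while the degenerate connected graphs of valency at most one (such as $K_2$) must simply be verified to satisfy the stated conclusions directly; all such cases are routine once the local full symmetry above is established.
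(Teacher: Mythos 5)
Your proposal is correct, and it follows essentially the route the paper itself takes: the paper does not reprove this lemma (it is quoted from Lazarovich), but your argument is exactly the combination of Lemma~\ref{l:locally fully symmetric} (local full symmetry from star isomorphisms $\st(v)\to\st(v)$), Lemma~\ref{l:locally 2} (local $2$-arc-transitivity), and the standard dichotomy for locally transitive connected graphs recalled in Section~\ref{s:definitions}, which is precisely how the paper itself re-derives Case~(1) in the remark following Lemma~\ref{l:locally 2}. Your flagging of the valency-one degenerate cases is appropriate and is handled in the paper by Lemma~\ref{lem:valency one}.
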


\noindent It is noted in the proof of \cite[Lemma 1.1]{L} that in Case \eqref{c:et}, $d_1 \neq d_2$.  We will discuss both cases of Lemma \ref{l:lazarovich} further below.
Our first observations are as follows.   

\begin{lemma}\label{l:vertex transitive}  Let $\G$ be a $G$-star-transitive graph.  If $\G$ is $k$-regular then $\G$ is $G$-vertex-transitive.
\end{lemma}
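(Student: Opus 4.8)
The plan is to argue directly from the definitions, without invoking Lemma~\ref{l:lazarovich} or assuming connectedness. The crucial observation is that, by Definition~\ref{def:star and iso}, a star isomorphism $\varphi\colon\st(v_1)\to\st(v_2)$ is nothing more than a bijection of the underlying sets sending the centre $v_1$ to the centre $v_2$; it is \emph{not} required to preserve any adjacency relations among the neighbours of $v_1$ and $v_2$ (this is exactly the point emphasised in the paper after Definition~\ref{def:star trans}). Since $\st(v)=\{v\}\cup\{f\in\EG\mid f\text{ is incident to }v\}$ has exactly $1+d$ elements, where $d$ is the valency of $v$, a star isomorphism $\st(v_1)\to\st(v_2)$ exists precisely when $v_1$ and $v_2$ have the same valency.

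First I would fix two arbitrary vertices $v_1,v_2\in\VG$. Because $\G$ is $k$-regular, both open stars $\st(v_1)$ and $\st(v_2)$ have cardinality $k+1$, so I may choose \emph{any} bijection $\varphi\colon\st(v_1)\to\st(v_2)$ with $\varphi(v_1)=v_2$; such a $\varphi$ is a star isomorphism. Applying the hypothesis that $\G$ is $G$-star-transitive to this $\varphi$ then yields an automorphism $\psi\in G$ with $\psi(v_1)=\varphi(v_1)=v_2$, so $v_2$ lies in the $G$-orbit of $v_1$. As $v_1$ and $v_2$ were arbitrary, $G$ acts transitively on $\VG$, as required.

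I do not expect any genuine obstacle: the only point to get right is the (very weak) condition under which a star isomorphism exists, namely equality of valencies, after which $k$-regularity makes this condition automatic for every ordered pair of vertices. In particular the argument uses neither the connectedness of $\G$ nor the dichotomy of Lemma~\ref{l:lazarovich}, and so applies to any $k$-regular $G$-star-transitive graph.
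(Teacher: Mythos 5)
Your proof is correct, and it is precisely the argument the paper has in mind: the lemma is stated there as an immediate observation with no written proof, the point being exactly that in a $k$-regular graph any centre-preserving bijection of open stars is a star isomorphism, so star-transitivity directly supplies an automorphism carrying $v_1$ to $v_2$. Your additional remarks — that connectedness and Lemma~\ref{l:lazarovich} are not needed — are accurate.
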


\begin{lemma}\label{l:locally fully symmetric}  Let $\G$ be a $G$-star-transitive graph.   Then $G_v^{\G(v)} = S_{|\G(v)|}$ for all $v \in \VG$, that is, $\G$ is locally fully symmetric.
\end{lemma}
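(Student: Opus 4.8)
The plan is to apply the definition of $G$-star-transitivity to a carefully chosen family of star isomorphisms from $\st(v)$ to itself, arranged so that the extending automorphisms realise every permutation of $\G(v)$. The whole content is an unwinding of Definitions~\ref{def:star and iso} and \ref{def:star trans}, so I do not expect any genuine obstacle beyond reading the definitions literally.

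Fix an arbitrary vertex $v \in \VG$, and recall that $\st(v)$ consists of $v$ together with the edges of $\G$ incident to $v$; these edges are in natural bijection with the neighbours $\G(v)$ via $\{v,x\} \leftrightarrow x$. First I would observe that, by Definition~\ref{def:star and iso}, a star isomorphism $\varphi \colon \st(v) \to \st(v)$ with $\varphi(v) = v$ is nothing more than an arbitrary bijection of the set of incident edges: no adjacency relations among the vertices of $\G(v)$ need be preserved. (This is precisely the subtlety emphasised after Definition~\ref{def:star trans}, and the reason for the cumbersome formulation there.) Hence, given any permutation $\sigma \in \Sym(\G(v))$, the map $\varphi_\sigma$ fixing $v$ and sending $\{v,x\} \mapsto \{v,\sigma(x)\}$ is a valid star isomorphism from $\st(v)$ to $\st(v)$.

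Next I would invoke $G$-star-transitivity applied to each such $\varphi_\sigma$. This yields some $\psi \in G$ with $\psi(v) = \varphi_\sigma(v) = v$ and $\psi(f) = \varphi_\sigma(f)$ for every edge $f$ incident to $v$. The first equation gives $\psi \in G_v$; since $\psi(v)=v$ we have $\psi(\{v,x\}) = \{v, \psi(x)\}$, so the second condition forces $\psi(x) = \sigma(x)$ for all $x \in \G(v)$. Thus $\psi$ induces $\sigma$ on $\G(v)$, and therefore $\sigma \in G_v^{\G(v)}$. Since $\sigma \in \Sym(\G(v))$ was arbitrary, $G_v^{\G(v)} \supseteq \Sym(\G(v)) = S_{|\G(v)|}$; as $G_v^{\G(v)}$ is by definition a subgroup of $\Sym(\G(v))$, equality holds. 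As $v$ was arbitrary, every vertex has full symmetric local action, so $\G$ is locally fully symmetric, as claimed.
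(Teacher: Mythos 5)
Your proof is correct and is exactly the argument the paper has in mind: the paper states this lemma as an immediate observation without写 proof, and your unwinding of Definitions~\ref{def:star and iso} and \ref{def:star trans} (every permutation of the incident edges is a star isomorphism $\st(v)\to\st(v)$, hence is induced by some $\psi\in G_v$) is the intended justification. No gaps.
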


The converse of Lemma \ref{l:locally fully symmetric} does not hold, as there are graphs which are locally fully symmetric but are not star-transitive.  In fact, there are regular graphs which are locally fully symmetric but are not star-transitive. The following example was first described by Lipschutz and Xu \cite{LX}.  Let $G = \PGL(2,p)$ for $p$ prime, $p \equiv \pm1 \pmod{24}$.  Then $G$ is generated by subgroups $H \cong D_{24}$ and $K \cong S_4$, such that $H \cap K \cong D_8$.  The graph $\G$ is defined to be the bipartite graph with vertex set $G/H \sqcup G/K$ and edge set $G/(H \cap K)$, so that the edge $g(H \cap K)$, for $g \in G$, connects the vertices $gH$ and $gK$.  Then $\G$ is cubic and locally fully symmetric, since the natural left-action of $G$ induces $S_3$ at each vertex.  However, $\G$ is not vertex-transitive.

The following sufficient conditions for star-transitivity are easily verified.

\begin{lemma}\label{l:suff star}  If there exists a subgroup $G \le \Aut(\G)$ such that either:
\begin{enumerate}
\item\label{suff1} $G$ is locally fully symmetric and vertex-transitive; or 
\item\label{suff2} $G$ is locally fully symmetric and edge-transitive, and there are 
natural numbers $k \neq \ell$ such that each vertex has valency either $k$ or $\ell$;
\end{enumerate}
then $\G$ is $G$-star-transitive.
\end{lemma}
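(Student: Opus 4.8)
The plan is to first unwind the combinatorial content of a star isomorphism, and then to build the extending automorphism in two stages: move $v_1$ to $v_2$, and then correct the induced permutation of its neighbours using local full symmetry. Since $\st(v_i)$ is $\{v_i\}$ together with the edges incident to $v_i$, a star isomorphism $\varphi\colon\st(v_1)\to\st(v_2)$ is exactly the data of $\varphi(v_1)=v_2$ together with a bijection from the edges at $v_1$ to the edges at $v_2$. Identifying each edge at $v_i$ with its other endpoint, this bijection is the same thing as a bijection $\bar\varphi\colon\G(v_1)\to\G(v_2)$, and the conclusion we must reach is the existence of $\psi\in G$ with $\psi(v_1)=v_2$ whose induced action on $\G(v_1)$ equals $\bar\varphi$; for then $\psi$ and $\varphi$ agree on $v_1$ and on every edge at $v_1$.

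First I would produce some $g\in G$ with $g(v_1)=v_2$. In case~(\ref{suff1}) this is immediate from vertex-transitivity. In case~(\ref{suff2}) I read the hypothesis as asserting that $\G$ is genuinely biregular, with both valencies $k$ and $\ell$ actually occurring; this is what makes the statement correct, since an edge-transitive, locally fully symmetric graph of \emph{constant} valency need not be star-transitive, as the Lipschutz--Xu example shows. As $\varphi$ is a bijection of stars, $v_1$ and $v_2$ have the same valency, say both $k$ (the case $\ell$ is identical). Edge-transitivity together with the fact that automorphisms preserve valency forces every edge of $\G$ to join a valency-$k$ vertex to a valency-$\ell$ vertex: otherwise every edge would join two vertices of equal valency, and one of $k,\ell$ would never occur at an edge, contradicting that it occurs at a vertex. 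Now choose edges $e_1\ni v_1$ and $e_2\ni v_2$; by edge-transitivity some $g\in G$ maps $e_1$ to $e_2$, and since $g$ preserves valency while the two endpoints of $e_2$ have distinct valencies $k\ne\ell$, the valency-$k$ endpoint $v_1$ must map to the valency-$k$ endpoint $v_2$, i.e. $g(v_1)=v_2$.

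With such a $g$ fixed, I would correct the action on the neighbours of $v_1$. The automorphism $g$ induces a bijection $\bar g\colon\G(v_1)\to\G(v_2)$, so $\sigma:=\bar g^{-1}\bar\varphi$ is a permutation of $\G(v_1)$. Since $G$ is locally fully symmetric, $G_{v_1}^{\G(v_1)}=\Sym(\G(v_1))$, so there is $h\in G_{v_1}$ inducing $\sigma$ on $\G(v_1)$. Then $\psi:=gh\in G$ satisfies $\psi(v_1)=g(v_1)=v_2$ and induces $\bar g\,\sigma=\bar\varphi$ on $\G(v_1)$, hence $\psi(f)=\varphi(f)$ for every edge $f$ at $v_1$. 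Thus $\psi$ extends $\varphi$ as required, establishing $G$-star-transitivity in both cases.

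The only genuinely delicate step is the choice of $g$ in case~(\ref{suff2}): one must ensure that $g$ sends $v_1$ to $v_2$ rather than to the opposite endpoint of $e_2$. This is exactly where the hypothesis $k\ne\ell$ (with both valencies present) is used, and it is what distinguishes this situation from edge-transitive, locally fully symmetric graphs of constant valency, which may fail to be star-transitive. Everything else is routine book-keeping with the two-stage construction.
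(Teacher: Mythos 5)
Your argument is correct. The paper offers no proof of this lemma (it is dismissed as ``easily verified''), and your two-stage construction --- first transport $v_1$ to $v_2$, then use $G_{v_1}^{\G(v_1)}=\Sym(\G(v_1))$ to correct the induced permutation of neighbours --- is exactly the verification the authors have in mind; the reduction of a star isomorphism to an arbitrary bijection $\G(v_1)\to\G(v_2)$ is the right way to unwind Definition~\ref{def:star and iso}. The one substantive point you add is the observation that in case~(\ref{suff2}) the hypothesis must be read as saying that \emph{both} valencies $k\neq\ell$ actually occur: this is indeed necessary (otherwise the Lipschutz--Xu graph, which is cubic, edge-transitive and locally fully symmetric but not star-transitive, would be a counterexample under a vacuous choice of $\ell$), and it is precisely what lets edge-transitivity pin down $g(v_1)=v_2$ rather than the opposite endpoint. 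This reading is consistent with how the lemma is applied throughout the paper, where case~(\ref{suff2}) is only ever invoked for genuinely biregular graphs.
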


We now consider st(edge)-transitivity.  
Lazarovich's main results concern graphs which are both star-transitive and st(edge)-transitive. We shall prove that, with the exception of the small-valency cases handled in Section~\ref{s:small}, st(edge)-transitivity implies star-transitivity.
Recall that for $v \in \VG$ and $\{ u,v \} \in \EG$, we defined 
$X(v) = \{ v \} \cup \G(v)$ and $X(\{ u,v\})=\{ u \} \cup \{ v\} \cup 
\G(u) \cup \G(v)$.

\begin{lemma}\label{lem:stedge}\label{lem:edgeaction} A connected graph $\G$ with $G = \Aut(\G)$, minimal valency at least three and girth at least four is st(edge)-transitive if and only if it is edge-transitive and either:
\begin{enumerate}
\item there is a $k \in \N$ so that for all edges $\{ u,v\}$, $G_{\{u,v\}}^{X(\{u,v\})} = S_{k-1} \Wr S_2$, in which case $\G$ is $k$-regular; or 
\item there are $k,\ell \in \N$ with $k \neq \ell$ so that for all edges $\{ u,v\}$, $G_{\{u,v\}}^{X(\{u,v\})} = S_{k-1} \times S_{\ell-1}$, in which case $\G$ is $(k,\ell)$-biregular.
\end{enumerate}
\end{lemma}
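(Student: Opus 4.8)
The plan is to work directly with the combinatorial Definition~\ref{def:star and iso} of edge-star isomorphism rather than Proposition~\ref{prop:equiv}(ii), since we only assume $\girth(\G)\ge 4$. Fix an edge $\{u,v\}$ with $u$ of valency $k$ and $v$ of valency $\ell$, and write $A=\G(u)\setminus\{v\}$ and $B=\G(v)\setminus\{u\}$. Because $\girth(\G)\ge 4$ there are no triangles, so $A\cap B=\varnothing$ and $X(\{u,v\})=\{u,v\}\sqcup A\sqcup B$ has exactly $2+(k-1)+(\ell-1)$ vertices, with an edge $\{u,a\}$ for each $a\in A$ and an edge $\{v,b\}$ for each $b\in B$. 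The first thing I would record is that an edge-star isomorphism is blind to any adjacencies within $X(\{u,v\})$ other than those forced by incidence to $u$ or $v$: it merely permutes the edges at $u$ among themselves and the edges at $v$ among themselves, possibly after interchanging the roles of $u$ and $v$, which Definition~\ref{def:star and iso} allows precisely when $k=\ell$. Consequently the group of all edge-star self-isomorphisms of $\st(\{u,v\})$ is $S_{k-1}\Wr S_2$ when $k=\ell$ and $S_{k-1}\times S_{\ell-1}$ when $k\ne\ell$. On the other hand, every $\psi\in G_{\{u,v\}}$ satisfies $\psi(\G(u))=\G(\psi(u))$, so it preserves the block system $\{A,B\}$ of $X(\{u,v\})$, swapping the blocks exactly when it swaps $u$ and $v$ (which requires $k=\ell$); hence under the correspondence $a\leftrightarrow\{u,a\}$ and $b\leftrightarrow\{v,b\}$ the group $G_{\{u,v\}}^{X(\{u,v\})}$ acts as a subgroup of the relevant full self-isomorphism group, and its action coincides with the induced action on the edge-star.

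For the forward implication I would first show $\G$ is regular or biregular. Given the edge $\{u,v\}$, the self-isomorphism of $\st(\{u,v\})$ fixing $u,v$ and every edge at $u$ while realising an arbitrary permutation of the edges at $v$ is a valid edge-star isomorphism; by st(edge)-transitivity it extends to some $\psi\in G_u\cap G_v=G_{uv}$, and $\psi$ induces the chosen permutation of $B$. Thus $G_{uv}$ induces $S_{\ell-1}$ on $B=\G(v)\setminus\{u\}$, so all vertices of $\G(v)\setminus\{u\}$ lie in one $G_{uv}$-orbit and hence share a common valency. Since the minimal valency is at least three I may vary $u$ over $\G(v)$ and conclude that all neighbours of $v$ have a common valency, for every $v$. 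A short argument then shows valencies alternate along every path, so by connectedness $\G$ is either $k$-regular, or bipartite and $(k,\ell)$-biregular with $k\ne\ell$. In either case every edge carries the same valency-multiset, so between any two edge-stars there exists an edge-star isomorphism; applying st(edge)-transitivity to these yields automorphisms carrying any edge to any other, i.e.\ $\G$ is edge-transitive. Finally, applying st(edge)-transitivity to each edge-star self-isomorphism of $\st(\{u,v\})$ shows $G_{\{u,v\}}^{X(\{u,v\})}$ contains the full self-isomorphism group; together with the containment from the first paragraph this gives equality, hence conclusion $(1)$ in the regular case and $(2)$ in the biregular case.

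For the converse, assume $\G$ is edge-transitive and that $G_{\{u,v\}}^{X(\{u,v\})}$ equals the full edge-star self-isomorphism group of the appropriate type for every edge. Given any edge-star isomorphism $\varphi\colon\st(\{u_1,v_1\})\to\st(\{u_2,v_2\})$, edge-transitivity supplies $g\in G$ carrying $\{u_1,v_1\}$ to $\{u_2,v_2\}$; as a graph automorphism $g$ induces an edge-star isomorphism $\gamma$, so the composite $\varphi\circ\gamma^{-1}$ is an edge-star self-isomorphism of $\st(\{u_2,v_2\})$, which by hypothesis is induced by some $h\in G_{\{u_2,v_2\}}$. Then $hg\in\Aut(\G)$ induces $\varphi$ on $\st(\{u_1,v_1\})$, so it extends $\varphi$, proving st(edge)-transitivity.

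The main obstacle I anticipate is the bookkeeping forced by $\girth(\G)=4$: I must be careful that the combinatorially defined edge-star isomorphisms genuinely ignore the $A$--$B$ adjacencies (the $4$-cycles through $\{u,v\}$), so that the full groups $S_{k-1}\Wr S_2$ and $S_{k-1}\times S_{\ell-1}$ really do arise as edge-star self-isomorphisms, while at the same time the realising graph automorphisms are constrained to respect those very adjacencies. Keeping the two notions of ``isomorphism of the edge-star'' cleanly separated — the loose combinatorial one and the restriction of a genuine graph automorphism — and correctly identifying $G_{\{u,v\}}^{X(\{u,v\})}$ with the induced action on the edge-star is the delicate part; the valency-alternation step and the composition argument in the converse are then routine.
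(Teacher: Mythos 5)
Your proof is correct, but it follows a genuinely different route from the paper's. The paper splits on girth: for girth exactly $4$ it invokes Lemma~\ref{lem:girth 3} to identify the graph as a complete bipartite graph (forward direction) and, for the converse, uses a $4$-cycle orbit argument ($z^{G_{u,v,w}}=\Gamma(u)\setminus\{v\}$ forces $\Gamma(u)=\Gamma(w)$) to show the stabiliser conditions force completeness of the bipartite graph; for girth at least $5$ it invokes Proposition~\ref{prop:equiv}(ii) so that edge-stars become induced subgraphs and the claim reduces to the observation that $\Aut(\G\mid_{X(\{u,v\})})$ is visibly $S_{k-1}\Wr S_2$ or $S_{k-1}\times S_{\ell-1}$. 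You instead work uniformly with the combinatorial Definition~\ref{def:star and iso} for all girth at least $4$, exploiting precisely the point the paper flags as the subtlety of that definition: edge-star isomorphisms are blind to the $A$--$B$ adjacencies coming from $4$-cycles through $\{u,v\}$, so the full self-isomorphism group is the same wreath or direct product in both girth regimes, and the induced action of $G_{\{u,v\}}$ on $X(\{u,v\})$ is canonically identified with its induced action on the edge-star. Your forward direction then derives regularity or biregularity directly from the $S_{\ell-1}$-transitivity of $G_{uv}$ on $\G(v)\setminus\{u\}$ (the valency-alternation step, which is where minimal valency at least three is genuinely used), and your converse is a clean composition argument $hg$ that works verbatim in both girth cases. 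What the paper's route buys is brevity, since Lemma~\ref{lem:girth 3} and Proposition~\ref{prop:equiv} are already available; what yours buys is self-containedness and the elimination of the case split — in particular you never need to know that the girth-$4$ examples are exactly the complete bipartite graphs. Both arguments are sound.
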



\begin{proof}
Observe first that since the minimum valency of $\Gamma$ is at least three then $\Gamma$ is not a tree. If $\Gamma$ has girth  four and is st(edge)-transitive then Lemma \ref{lem:girth 3} implies that $\Gamma$ is complete bipartite. Thus $\Gamma$ is edge-transitive and (1) holds if $\Gamma$ is regular while (2) holds in $\Gamma$ is biregular. Conversely, assume that  $\Gamma$ has girth four, is edge-transitive and either (1) or (2) hold.  Let $\{u,v,w,z\}$ be a 4-cycle. Then $z^{G_{u,v,w}}=\Gamma(u)\backslash\{v\}$ and so $\Gamma(u)=\Gamma(w)$. Similarly we see that $\Gamma(v)=\Gamma(z)$ and so $\Gamma$ is complete bipartite and hence st(edge)-transitive.

If $\girth(\G) \ge 5$ then clearly $\Aut(\G \mid_{X(\{u,v\})}) \cong S_{k-1} \Wr S_2$ or
$S_{k-1} \times S_{\ell-1}$ in the cases $k=\ell$ and $k \ne \ell$, respectively. Moreover, by Proposition~\ref{prop:equiv}(ii), every edge-star isomorphism is a graph isomorphism, and  $\Gamma$ is st(edge)-transiitive if and only if $\Gamma$ is edge-transitive and every $\varphi \in \Aut(\G \mid_{X(\{u,v\})})$ extends to an automorphism in
$G_{\{u,v\}}$.  Since  the restriction of any 
$\psi \in G_{\{u,v\}}$ to $X(\{u,v\})$ is in $\Aut(\G \mid_{X(\{u,v\})})$ the result follows.
\end{proof}

\begin{lemma}
\label{lem:stimpliesstar}
Let $\Gamma$ be a $G$-st(edge)-transitive graph of minimum valency at least three. Then $\Gamma$ is $G$-star-transitive.
\end{lemma}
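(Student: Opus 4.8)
The plan is to reduce the statement to two facts that together are equivalent to $G$-star-transitivity: (a) $G$ is transitive on the set of vertices of each fixed valency, and (b) $G_v^{\G(v)}=S_{|\G(v)|}$ for every $v\in\VG$. These suffice because any star isomorphism $\varphi\colon\st(v_1)\to\st(v_2)$ forces $|\G(v_1)|=|\G(v_2)|$; choosing $g\in G$ with $g(v_1)=v_2$ by (a), the map $g^{-1}\varphi$ is a star automorphism of $\st(v_1)$, hence by (b) is induced by some $h\in G_{v_1}$, and then $gh\in G$ extends $\varphi$. I would argue directly from Definition~\ref{def:star and iso}, so that no assumption on $\girth(\G)$ is needed.

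First I would prove (b). Fix an edge $\{u,v\}$. For each permutation of the edges at $v$ other than $\{u,v\}$ there is an edge-star isomorphism $\st(\{u,v\})\to\st(\{u,v\})$ fixing $u$ and $v$, fixing every edge at $u$, and inducing that permutation; by $G$-st(edge)-transitivity it extends to some $\psi\in G_{uv}$. Thus $G_{uv}$ induces the full symmetric group $S_{|\G(v)|-1}$ on $\G(v)\setminus\{u\}$ while fixing $u$. Since each such $\psi$ is an automorphism and therefore preserves valency, all vertices of $\G(v)\setminus\{u\}$ have a common valency; because $|\G(v)|\ge 3$, the sets $\G(v)\setminus\{u\}$ for different $u\in\G(v)$ overlap, so in fact every neighbour of $v$ has one valency $c(v)$. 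Consequently, for any $u'\in\G(v)$ we have $|\G(u')|=|\G(u)|$, so there is an edge-star isomorphism $\st(\{u,v\})\to\st(\{u',v\})$ fixing $v$ and sending $u\mapsto u'$; its extension lies in $G_v$ and moves $u$ to $u'$. Hence $G_v$ is transitive on $\G(v)$, and a transitive permutation group whose point-stabiliser is the full symmetric group on the remaining points is itself the full symmetric group, giving (b).

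Next I would read off the global valency pattern. By the argument above, every neighbour of $v$ has valency $c(v)$, so along any edge $\{u,v\}$ we have $|\G(u)|=c(v)$ and $|\G(v)|=c(u)$; thus $\G$ is either regular or bipartite and biregular with two distinct valencies $c_1\ne c_2$. In either case every edge has the same pair of endpoint-valencies, so any two edges admit an edge-star isomorphism, and its extension maps one edge to the other; hence $G$ is edge-transitive.

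Finally I would establish (a), splitting into the two cases. If $\G$ is regular, then for any edge $\{u,v\}$ the equal valencies allow an edge-star isomorphism interchanging $u$ and $v$, whose extension in $G$ swaps them; so adjacent vertices lie in a common $G$-orbit and, as $\G$ is connected, $G$ is vertex-transitive, which is (a) since all valencies coincide. If $\G$ is biregular, then automorphisms preserve valency, so $G$ fixes the bipartition setwise, and $G$-edge-transitivity forces $G$ to be transitive on each part, that is, on the vertices of each valency, again giving (a). Combining (a) and (b) yields $G$-star-transitivity. I expect the main obstacle to be step (b), and specifically the passage from ``$\G(v)\setminus\{u\}$ has constant valency'' to ``$\G(v)$ has constant valency'': this is exactly where minimum valency at least three enters (for valency two the sets need not overlap, consistent with the genuinely different classification of Section~\ref{s:small}). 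A secondary subtlety is ruling out a regular but $G$-part-preserving configuration in the regular case, which is handled by the endpoint-swapping edge-star isomorphism.
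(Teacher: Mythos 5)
Your proof is correct, and while it rests on the same underlying mechanism as the paper's --- use edge-star isomorphisms that fix both endpoints of an edge to show that $G_{uv}$ induces the full symmetric group on $\G(v)\setminus\{u\}$, use minimum valency at least three to glue two such statements into $G_v^{\G(v)}=S_{|\G(v)|}$, and then combine local full symmetry with vertex- or part-transitivity --- the route is organised differently enough to be worth noting. The paper splits on girth: for girth at most $4$ it invokes Lemma~\ref{lem:girth 3} to reduce to complete bipartite graphs, and for girth at least $5$ it cites Lemma~\ref{lem:edgeaction} for the local action and Lemma~\ref{l:suff star} for the sufficiency step. You instead argue directly from Definition~\ref{def:star and iso}, which lets you treat all girths uniformly and keep an arbitrary subgroup $G\le\Aut(\G)$ throughout; this is a genuine gain, since Lemma~\ref{lem:edgeaction} is stated only for $G=\Aut(\G)$ and girth at least four, and the paper's proof momentarily sets $G=\Aut(\G)$ even though the statement concerns general $G$. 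The price is length: you must re-derive inline both the edge-action computation and the content of Lemma~\ref{l:suff star} (your reduction to conditions (a) and (b)), and you need the extra bookkeeping about the global valency pattern (regular versus bipartite biregular) that the paper absorbs into its cited lemmas. Your identification of exactly where minimum valency three enters --- the overlap of $\G(v)\setminus\{u\}$ and $\G(v)\setminus\{u'\}$ --- coincides with the paper's use of $k\ge 3$, and your endpoint-swapping edge-star isomorphism in the regular case is the same device the paper uses to obtain arc-transitivity.
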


\begin{proof}
If $\girth(\G) \le 4$ then $\G \cong K_{m,n}$ by Lemma~\ref{lem:girth 3} and the statement of this lemma holds. Suppose that $\girth(\G) \ge 5$, let 
$\{u,v\}$ be an edge of $\Gamma$, let $k$ be the valency of $v$ and let $G=\Aut(\Gamma)$.  By Lemma \ref{lem:edgeaction}, $G_v^{\Gamma(v)\backslash \{u\}}=S_{k-1}$. Let $w\in\Gamma(v)\backslash\{u\}$. Then again by Lemma \ref{lem:edgeaction}, $G_v^{\Gamma(v)\backslash \{w\}}=S_{k-1}$. As $k \ge 3$, it follows that $G_v^{\Gamma(v)}=S_k$ and in particular $G_x^{\Gamma(x)}=S_{|\Gamma(x)|}$ for each vertex $x$. Thus $\Gamma$ is locally fully symmetric, hence locally 2-arc transitive and edge-transitive. 

If $u$ has valency $\ell\neq k$ then Lemma \ref{l:suff star}(\ref{suff2}) implies that $\Gamma$ is star-transitive.
If $u$ also has valency $k$ then, since $\Gamma$ is st(edge)-transitive, Lemma \ref{lem:edgeaction}(1) implies that there is an element of $G$ interchanging $u$ and $v$. Hence $G$ is arc-transitive and in particular vertex-transitive. Thus by Lemma \ref{l:suff star}(\ref{suff1}), $\Gamma$ is star-transitive.
\end{proof}

We now consider actions on arcs.

\begin{lemma}\label{l:locally 2}  If $\G$ is $G$-star-transitive then $G$ is locally 2-transitive on $\Gamma$ and thus $\Gamma$ is locally $(G,2)$-arc transitive.  
\end{lemma}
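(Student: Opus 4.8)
The plan is to obtain this as an immediate corollary of Lemma \ref{l:locally fully symmetric}. Since $\G$ is $G$-star-transitive, that lemma tells us $\G$ is locally fully symmetric, that is, $G_v^{\G(v)} = S_{|\G(v)|}$ for every $v \in \VG$. First I would record the elementary fact that the full symmetric group $S_n$ acts $2$-transitively on its $n$ points whenever $n \ge 2$ (indeed it is $n$-transitive). Applying this with $n = |\G(v)|$ shows that $G_v^{\G(v)}$ is $2$-transitive for every vertex $v$ of valency at least two, which is exactly the definition of $G$ being locally $2$-transitive on $\G$.

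For the second assertion I would invoke the observation already recorded in Section \ref{sec:graphs} (see \cite[Lemma 3.2]{GLP1}): a graph in which every vertex has valency at least two is locally $(G,2)$-arc-transitive if and only if $G_v^{\G(v)}$ is $2$-transitive for all $v$. Combining this equivalence with the local $2$-transitivity just established yields at once that $\G$ is locally $(G,2)$-arc-transitive, completing the proof.

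There is no substantial obstacle here: the content of the lemma is essentially a restatement of Lemma \ref{l:locally fully symmetric} together with the $2$-transitivity of symmetric groups. The only point requiring a word of care is the behaviour at vertices of valency one, where $G_v^{\G(v)} = S_1$ is trivial and $2$-transitivity must be read vacuously; but such degenerate configurations are precisely the low-valency cases already classified in Section \ref{s:small}, so they cause no genuine difficulty.
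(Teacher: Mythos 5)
Your proposal is correct and matches the paper's own proof, which likewise deduces the result in one line from Lemma \ref{l:locally fully symmetric} together with the $2$-transitivity of $S_{|\G(v)|}$. Your additional remarks on the equivalence from \cite[Lemma 3.2]{GLP1} and on valency-one vertices only make explicit what the paper leaves implicit.
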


\begin{proof}  Since $G_v^{\G(v)} = S_{|\G(v)|}$ which is $2$-transitive, the graph $\G$ is locally $2$-transitive.
\end{proof}

It follows that if a connected star-transitive graph $\G$ is vertex-transitive then $\G$ is $2$-arc transitive, as was given in Case \eqref{c:vt} of Lemma \ref{l:lazarovich} above.

\begin{lemma} 
\label{lem:3 arc}
If $\G$ has minimal valency at least three and $\G$ is $G$-st(edge)-transitive then $\G$ is locally $(G,3)$-arc transitive.
\end{lemma}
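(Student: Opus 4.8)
The plan is to reduce local $(G,3)$-arc transitivity to two ingredients: local $(G,2)$-arc transitivity, which is already available, and a one-step ``extension'' statement asserting that the pointwise stabiliser of a $2$-arc acts transitively on the ways of extending it to a $3$-arc.

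First I would record that $\G$ is locally $(G,2)$-arc transitive. By Lemma~\ref{lem:stimpliesstar} the hypotheses (that $\G$ is $G$-st(edge)-transitive with minimal valency at least three) imply that $\G$ is $G$-star-transitive, and then Lemma~\ref{l:locally 2} gives local $(G,2)$-arc transitivity. I would also note that, by Lemma~\ref{lem:girth 3}, a graph of minimal valency at least three that is st(edge)-transitive has girth at least four (the only st(edge)-transitive graph of girth three is $K_3$, of valency two); hence $\G$ is triangle-free, so any two adjacent vertices have no common neighbour.

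The heart of the proof is the extension claim: for every $2$-arc $(v_0,v_1,v_2)$, the pointwise stabiliser $G_{v_0}\cap G_{v_1}\cap G_{v_2}$ is transitive on $\G(v_2)\setminus\{v_1\}$. To prove it, fix distinct $y,y'\in\G(v_2)\setminus\{v_1\}$ and set $e=\{v_1,v_2\}$. I would construct an edge-star isomorphism $\varphi\colon\st(e)\to\st(e)$ that fixes the vertices $v_1,v_2$, fixes every edge incident to $v_1$ (in particular the edge $\{v_1,v_0\}$, which lies in $\st(e)$ because $v_0\in\G(v_1)\setminus\{v_2\}$), and among the edges incident to $v_2$ transposes $\{v_2,y\}$ with $\{v_2,y'\}$ while fixing all others. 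This $\varphi$ satisfies conditions (i) and (ii) of Definition~\ref{def:star and iso}, so it is a genuine edge-star isomorphism. By $G$-st(edge)-transitivity it extends to some $\psi\in G$ with $\psi(v_1)=v_1$, $\psi(v_2)=v_2$ and $\psi(f)=\varphi(f)$ for every edge $f\in\EG\cap\st(e)$. Reading off the images of the fixed edge $\{v_1,v_0\}$ and of $\{v_2,y\}$, together with $\psi(v_1)=v_1$ and $\psi(v_2)=v_2$, yields $\psi(v_0)=v_0$ and $\psi(y)=y'$; since $\G$ is triangle-free we have $v_0\notin\G(v_2)$, so these two assignments are consistent and $\psi\in G_{v_0}\cap G_{v_1}\cap G_{v_2}$, proving the claim.

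Finally I would assemble the two ingredients. Given two $3$-arcs $(v_0,a_1,a_2,a_3)$ and $(v_0,b_1,b_2,b_3)$ starting at a common vertex $v_0$, local $(G,2)$-arc transitivity supplies $g\in G_{v_0}$ carrying $(v_0,a_1,a_2)$ to $(v_0,b_1,b_2)$, so that $g(a_3)\in\G(b_2)\setminus\{b_1\}$; the extension claim then supplies $h\in G_{v_0}\cap G_{b_1}\cap G_{b_2}$ with $h(g(a_3))=b_3$, whence $hg\in G_{v_0}$ sends the first $3$-arc to the second. Thus $G_{v_0}$ is transitive on the $3$-arcs starting at $v_0$ for every $v_0$, i.e.\ $\G$ is locally $(G,3)$-arc transitive. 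The point demanding the most care is the construction and extension of $\varphi$: because the definition of edge-star isomorphism does not record adjacencies among the outer vertices, one must verify that the chosen $\varphi$ is legitimate and, crucially, that its extension keeps $v_0$ fixed while moving $y$ to $y'$ — which is precisely where the triangle-free conclusion of Lemma~\ref{lem:girth 3} is needed, to rule out $v_0$ being a common neighbour of $v_1$ and $v_2$.
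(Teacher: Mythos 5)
Your proof is correct and follows essentially the same route as the paper: both first obtain local $(G,2)$-arc transitivity via Lemma~\ref{lem:stimpliesstar} and Lemma~\ref{l:locally 2}, and then show that the pointwise stabiliser of a $2$-arc $(u,v,w)$ acts transitively on $\G(w)\setminus\{v\}$. The only difference is that the paper gets this last transitivity by citing Lemma~\ref{lem:edgeaction} (which gives $G_{uvw}^{\G(w)\setminus\{v\}}=S_{|\G(w)|-1}$), whereas you rederive it by hand with an explicit edge-star isomorphism and a careful (and correct) use of triangle-freeness from Lemma~\ref{lem:girth 3}.
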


\begin{proof}
By Lemma \ref{lem:stimpliesstar}, $\Gamma$ is $G$-star-transitive and so by Lemma \ref{l:locally 2}, $\Gamma$ is locally $(G,2)$-arc transitive. Let $(u,v,w)$ be a 2-arc of $\Gamma$. Since $\Gamma$ is $G$-st(edge)-transitive, Lemma \ref{lem:edgeaction} applied to the edge $\{v,w\}$ implies that $G_{uvw}^{\Gamma(w)\backslash\{v\}}=S_{|\Gamma(w)|-1}$. Hence $G_{uvw}$ acts transitively on the set of $3$-arcs starting with $(u,v,w)$. Thus $\Gamma$ is locally $(G,3)$-arc transitive.
\end{proof}

By Lemmas~\ref{lem:3 arc}, \ref{lem:stimpliesstar} and \ref{l:vertex transitive}, we have the following corollary. 

\begin{corollary}\label{c:3-arc transitive} Suppose $\G$ is a connected $k$-regular graph.  If $\G$ is $G$-star-transitive and $G$-st(edge)-transitive, then $\G$ is $(G,3)$-arc transitive.
\end{corollary}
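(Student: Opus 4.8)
The plan is to assemble the conclusion from three results already in hand, using $G$-vertex-transitivity as the bridge that converts local $3$-arc transitivity into global $(G,3)$-arc transitivity. Throughout I work under the assumption $k\ge 3$, which is the regime in which the cited lemmas apply; the degenerate cases $k\le 2$ are precisely the low-valency graphs treated in Section~\ref{s:small} (and recall that a cycle is $s$-arc transitive for every $s$), so they may be set aside here.

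First I would invoke Lemma~\ref{lem:3 arc}. Since $\G$ has minimal valency $k\ge 3$ and is $G$-st(edge)-transitive, that lemma gives immediately that $\G$ is locally $(G,3)$-arc transitive. This supplies the ``local'' half of the statement: for each vertex $v$ the stabiliser $G_v$ acts transitively on the set of $3$-arcs of $\G$ starting at $v$.

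Next I would produce a single $G$-orbit of base vertices so that this per-vertex transitivity can be globalised. Because $\G$ is $G$-st(edge)-transitive of minimal valency at least three, Lemma~\ref{lem:stimpliesstar} shows $\G$ is $G$-star-transitive (this is in any case a hypothesis of the corollary). Since $\G$ is moreover $k$-regular, Lemma~\ref{l:vertex transitive} then yields that $\G$ is $G$-vertex-transitive.

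Finally I would combine the two ingredients via the elementary observation recorded in Section~\ref{sec:graphs}: if $G$ is transitive on $\VG$ and $\G$ is locally $(G,s)$-arc transitive, then $\G$ is $(G,s)$-arc transitive. Applying this with $s=3$ gives that $\G$ is $(G,3)$-arc transitive, as required. I do not expect any genuine obstacle: the substantive work is entirely contained in the three cited lemmas, and the one point meriting care is simply that vertex-transitivity is exactly the hypothesis needed to upgrade transitivity on the $3$-arcs \emph{based at a fixed vertex} to transitivity on \emph{all} $3$-arcs of $\G$.
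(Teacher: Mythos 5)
Your proof is correct and follows exactly the paper's route: the authors likewise deduce the corollary by combining Lemma~\ref{lem:3 arc} (local $(G,3)$-arc transitivity), Lemmas~\ref{lem:stimpliesstar} and \ref{l:vertex transitive} (vertex-transitivity from regularity), and the standard fact that vertex-transitivity upgrades local $s$-arc transitivity to global. Your explicit remark about the $k\le 2$ cases is a harmless addition the paper leaves implicit.
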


\section{The vertex-transitive case}
\label{s:vertex trans}

In this section we prove Theorem \ref{v-star-st(edge)-t}, that is, we conduct a local analysis of vertex-transitive and st(edge)-transitive graphs. 
We recall that a group is called \emph{$p$-local} for some prime $p$ if it contains a normal $p$-subgroup. Part (1) of the following fundamental theorem was proven by 
Gardiner~\cite[Corollary 2.3]{G} and was also established by Weiss in \cite{W1}.  Part (2) of the theorem is due to Weiss~\cite{W2}. With the hypothesis as in Theorem~\ref{t:arc kernel}(2),
Weiss~\cite{W2} proved additional results on the structure of the point stabiliser $G_u$, which we shall recall as needed in the proofs of Lemmas~\ref{val=4} and \ref{val>4}.

\begin{theorem}\label{t:arc kernel} Let $\G$ be a connected graph and let $G \le \Aut(\G)$ be vertex-transitive and locally primitive.  Then there exists a prime $p$ such that for all arcs $uv$:
\begin{enumerate}
\item $G_{uv}^{[1]}$ is a $p$-group; and 
\item if in addition $G_{uv}^{[1]} \ne 1$ then 
$G_{uv}^{\G(u)}$ is $p$-local.
\end{enumerate}
\end{theorem}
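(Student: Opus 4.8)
The plan rests on one structural observation, used repeatedly. For any arc $ab$ of $\G$ the vertex-kernel $G_a^{[1]}=\ker(G_a\to G_a^{\G(a)})$ is normal in $G_a$, and since it fixes $b\in\G(a)$ it lies in $G_{ab}=G_a\cap G_b$; hence $G_a^{[1]}\trianglelefteq G_{ab}$. Restricting the action to $\G(b)$ then shows that $(G_a^{[1]})^{\G(b)}$ is a \emph{normal subgroup of the point stabiliser} $(G_b^{\G(b)})_a=(G_{ab})^{\G(b)}$ of the primitive group $G_b^{\G(b)}$. Because $G$ is vertex-transitive and locally primitive it is in particular locally transitive, so by the remarks of Section~\ref{sec:graphs} it is arc-transitive; in particular each arc can be reversed by an element of $G$, and it suffices to treat a single arc $uv$, the prime $p$ then being forced to be the same for all arcs. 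This interplay between the vertex-kernels and the normal subgroups of the point stabilisers of the (primitive) local groups is the engine for both parts.

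Part (1) is the Thompson--Wielandt theorem, established in this setting by Gardiner~\cite{G} and Weiss~\cite{W1}. Write $Z=G_{uv}^{[1]}=G_u^{[1]}\cap G_v^{[1]}$; as the intersection of two subgroups each normal in $G_{uv}$, it is normal in $G_{uv}$, and we may assume $Z\neq1$. For a prime $q$ dividing $|Z|$ one takes a Sylow $q$-subgroup $Q$ of $Z$ and, via a Frattini argument $G_{uv}=Z\,N_{G_{uv}}(Q)$, tracks the normaliser $N_G(Q)$ as it is propagated along $\G$. The force of primitivity is that a nontrivial normal subgroup of a point stabiliser in a primitive group is tightly constrained, and feeding the groups $(G_a^{[1]})^{\G(b)}$ into these constraints, together with the connectivity of $\G$, forces all prime divisors of $|Z|$ to coincide; hence $Z$ is a $q$-group and $p=q$ is independent of the arc. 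I expect the coincidence-of-primes step to be the main obstacle: one must rule out that two distinct primes survive by showing that the corresponding Sylow normalisers, transported around $\G$ and restricted to successive neighbourhoods, cannot be reconciled with primitivity in a finite graph.

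For Part (2) I would assume $Z\neq1$, so by Part~(1) it is a $p$-group, and prove that the point stabiliser $(G_u^{\G(u)})_v=G_{uv}^{\G(u)}=G_{uv}/G_u^{[1]}$ has a nontrivial normal $p$-subgroup, that is, is $p$-local. The natural candidate is the image of $G_v^{[1]}$, namely $\bar W:=G_v^{[1]}G_u^{[1]}/G_u^{[1]}\cong G_v^{[1]}/(G_v^{[1]}\cap G_u^{[1]})=G_v^{[1]}/Z$, which is normal in $G_{uv}/G_u^{[1]}$ since $G_v^{[1]}\trianglelefteq G_{uv}$. First I would check $\bar W\neq1$, equivalently that $G_v^{[1]}$ acts nontrivially on $\G(u)$: if not then $G_v^{[1]}\le G_u^{[1]}$, so $G_v^{[1]}=Z$, and reversing the arc $uv$ gives $G_u^{[1]}=G_v^{[1]}=:K$; since $K=G_u^{[1]}\trianglelefteq G_u$ with $G_u$ transitive on $\G(u)$, conjugating by elements of $G_u$ yields $K=G_a^{[1]}$ for every $a\in\G(u)$, and then by connectivity $K=G_x^{[1]}$ for every vertex $x$, whence $K$ fixes $\G$ pointwise and $K=1$, contradicting $K\supseteq Z\neq1$. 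Thus $\bar W\neq1$, and it remains to show that $\bar W\cong G_v^{[1]}/Z$ contains a nontrivial normal $p$-subgroup.

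This last point is the genuinely hard step and is Weiss's contribution~\cite{W2}: Part~(1) guarantees only that the edge-kernel $Z$ is a $p$-group, whereas the vertex-kernel $G_v^{[1]}$, and hence $\bar W$, could a priori fail to carry any normal $p$-structure. Ruling this out requires the detailed analysis of the normal subgroups of the primitive point stabilisers occurring along short paths of $\G$ --- precisely the structure theory of \cite{W2} for locally primitive graphs. The softer normality and connectivity bookkeeping above reduces Part~(2) to this input, which I would isolate as the crux and invoke directly.
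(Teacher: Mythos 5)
The paper does not actually prove Theorem~\ref{t:arc kernel}: it is quoted from the literature, with part (1) attributed to Gardiner \cite[Corollary 2.3]{G} and Weiss \cite{W1}, and part (2) to Weiss \cite{W2}. So there is no in-house argument to compare yours against, and your ultimate move --- reducing to, and then invoking, exactly these sources --- is the same one the authors make. Your elementary bookkeeping is correct: $G_a^{[1]}\trianglelefteq G_{ab}$, so $(G_a^{[1]})^{\G(b)}$ is normal in the point stabiliser of the primitive local group; $G_{uv}^{\G(u)}=G_{uv}/G_u^{[1]}$ with $\bar W=(G_v^{[1]})^{\G(u)}$ normal in it; and the connectivity argument showing that $G_u^{[1]}=G_v^{[1]}$ forces this common kernel to be trivial is the standard (and correct) one. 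One small point you should make explicit in part (2): a nontrivial normal $p$-subgroup of $\bar W$ does not immediately give $p$-locality of $G_{uv}^{\G(u)}$; you should pass to $O_p(\bar W)$, which is characteristic in $\bar W$ and hence normal in $G_{uv}^{\G(u)}$.

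That said, as a self-contained proof your proposal is incomplete, and you say so yourself: the coincidence-of-primes step in (1) and the nontriviality of a normal $p$-subgroup of $\bar W$ in (2) are not peripheral details but the entire content of the Thompson--Wielandt-type theorems, and your sketch only gestures at the Frattini/normaliser-propagation mechanism without carrying it out. Since the paper itself offers no proof and simply cites, your treatment is at least as complete as the paper's, and your identification of where the real difficulty lies is accurate; but it should be presented as a reduction to the cited results rather than as a proof.
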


Let $\Gamma$ be a connected graph, and let $G\le\Aut(\Gamma)$ be vertex-transitive.
Recall that a $(G,s)$-arc-transitive graph is called {\em $(G,s)$-transitive} if it is not $(G,s+1)$-arc-transitive.
For small valencies, the explicit structure of a vertex stabiliser is known. For example, in the cubic case we have the following result due to Tutte \cite{tutte47}, and Djokovi\v{c} and Miller \cite{DM2}.

\begin{theorem}
Let $\Gamma$ be a  cubic $(G,s)$-transitive graph. Then one of the following hold:
\begin{enumerate}
 \item $s=1$ and $G_v=C_3$;
\item $s=2$ and $G_v=S_3$;
\item $s=3$ and $G_v=S_3\times C_2$;
\item $s=4$ and $G_v=S_4$;
\item $s=5$ and $G_v=S_4\times C_2$.
\end{enumerate}
\end{theorem}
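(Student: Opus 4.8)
This classification is due to Tutte~\cite{tutte47} and to Djokovi\v{c} and Miller~\cite{DM2}; here is the route I would take. Fix an $s$-arc $\alpha=(v_0,v_1,\dots,v_s)$ of the connected cubic graph $\G$ and put $v=v_0$. Since $\G$ is $(G,s)$-transitive with $s\ge1$, the group $G$ is vertex- and arc-transitive, so it suffices to compute $G_v$ up to isomorphism together with the value of $s$. The induced local group $L:=G_v^{\G(v)}$ is a transitive subgroup of $S_3$, hence $L\cong C_3$ or $L\cong S_3$; and, as recalled in Section~\ref{sec:graphs}, $L$ is $2$-transitive (equivalently, $\G$ is locally $(G,2)$-arc transitive) exactly when $s\ge2$. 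Thus $L\cong C_3$ if $s=1$ and $L\cong S_3$ if $s\ge2$. A cubic graph has precisely $3\cdot2^{s-1}$ $s$-arcs issuing from $v$ (three choices for the first step, then two non-backtracking choices at each of the remaining steps), and $G_v$ permutes them transitively, so $|G_v|=3\cdot2^{s-1}\cdot|G_\alpha|$, where $G_\alpha$ is the stabiliser of $\alpha$ in $G_v$.

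\emph{The order of $G_v$ (the main obstacle).} First, $G_v^{[1]}$ is a $2$-group: the chain $G_v=G_v^{[0]}\ge G_v^{[1]}\ge G_v^{[2]}\ge\cdots$ of ball stabilisers terminates, since $G$ is finite, and each quotient $G_v^{[i]}/G_v^{[i+1]}$ embeds into a product of copies of $S_2\cong C_2$, one for each vertex at distance $i$ (which, in a cubic graph, has at most two neighbours further out). In particular every $G_\alpha$ is a $2$-group. The genuine difficulty is to bound the length of this chain: this is Tutte's theorem~\cite{tutte47}, that $s\le5$ and that the stabiliser $G_\alpha$ of a full $s$-arc is trivial. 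I would reconstruct it through the standard local analysis: since for $s\ge2$ the group $L\cong S_3$ is primitive on $\G(v)$, Theorem~\ref{t:arc kernel} applies with $p=2$ (the point stabiliser in $L$ being the $2$-group $C_2$) and yields the detailed structure, due to Weiss~\cite{W2}, of the successive kernels $G_{v_iv_{i+1}}^{[1]}$ along $\alpha$; combining this with the way $L$ acts on the layers forces the chain to terminate quickly, giving $s\le5$. Essentially all of the work lies here. Granting it, $G_\alpha=1$, so $|G_v|=3\cdot2^{s-1}$, and therefore $G_v^{[1]}=1$ for $s\le2$ while $|G_v^{[1]}|=2^{s-2}$ for $2\le s\le5$; moreover the same analysis shows that in the latter range $G_v^{[1]}$ is \emph{elementary abelian}, carrying the $S_3$-action dictated by the geometry.

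\emph{Identifying the isomorphism type.} With $|G_v|$, $L$, and the structure of $G_v^{[1]}$ in hand, the type of $G_v$ follows case by case. For $s=1$ and $s=2$ we have $G_v^{[1]}=1$, so $G_v\cong L$, giving $C_3$ and $S_3$. For $s=3$, $G_v^{[1]}\cong C_2$ is central (as $\Aut(C_2)=1$) and $G_v/G_v^{[1]}\cong S_3$; the two central extensions of $S_3$ by $C_2$ are $S_3\times C_2$ and the dicyclic group of order $12$, and I would exclude the latter by exhibiting an involution of $G_v$ that induces a transposition on $\G(v)$ — a reflection of $\alpha$ fixing $v$ — which splits the extension and yields $G_v\cong S_3\times C_2$. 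For $s=4$, $G_v$ has order $24$ with normal elementary abelian $G_v^{[1]}\cong C_2{\times}C_2$ on which $L\cong S_3$ acts as its standard module and with quotient $S_3$; this pins down $G_v\cong S_4$ (for instance $C_2\times A_4$ is excluded because its quotient is $C_6\not\cong S_3$). For $s=5$, $G_v$ has order $48$ with $G_v^{[1]}$ elementary abelian of order $8$ and quotient $S_3$, and the same bookkeeping gives $G_v\cong S_4\times C_2$; here the elementary abelian structure of $G_v^{[1]}$ is exactly what rules out $\GL(2,3)$, whose only normal $2$-subgroup of the right order is the quaternion group $Q_8$. Equivalently, all five vertex groups may be read off from Goldschmidt's classification of the amalgams $(G_v,G_{\{u,v\}},G_{uv})$ attached to cubic arc-transitive graphs.

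In summary, the local action and the arc-count reduce everything to determining $|G_v|$ and the structure of $G_v^{[1]}$, after which the isomorphism types are a routine extension-theoretic computation. The one hard input is Tutte's bound $s\le5$ together with the triviality of the $s$-arc stabiliser, which is where I expect all the real difficulty to lie.
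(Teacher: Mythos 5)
The paper does not prove this statement at all: it is quoted as a known classification, attributed to Tutte \cite{tutte47} and to Djokovi\v{c} and Miller \cite{DM2}, so the comparison is really with those sources rather than with anything in the text. Your outline is a correct reconstruction of the standard argument, and each step you state is right: the identification of $G_v^{\Gamma(v)}$ as $C_3$ or $S_3$ according to whether $s=1$ or $s\ge 2$, the count $|G_v|=3\cdot 2^{s-1}\cdot|G_\alpha|$, the fact that $G_v^{[1]}$ is a $2$-group, and the case-by-case identification of the extensions. Be aware, however, of how much weight rests on the parts you defer. First, the bound $s\le 5$ together with the triviality of the $s$-arc stabiliser is the entire content of Tutte's theorem; Theorem~\ref{t:arc kernel} and \cite{W2} do not hand it to you, and ``combining this with the way $L$ acts on the layers forces the chain to terminate quickly'' is a placeholder for the genuinely hard several pages. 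Second, the elementary abelianness of $G_v^{[1]}$ for $s=4,5$ and its $S_3$-module structure are asserted rather than derived; they do follow from the layer-by-layer embedding of $G_v^{[i]}/G_v^{[i+1]}$ into products of $S_2$'s once one knows where the chain stops, but this needs to be said. Third, the splitting arguments are thinner than they look: for $s=3$ the involution inducing a transposition on $\Gamma(v)$ is not automatic from $2$-arc-transitivity and is best extracted from the edge stabiliser (e.g.\ from $G_{uv}=G_u^{[1]}\times G_v^{[1]}\cong C_2\times C_2$ once $G_{uv}^{[1]}=1$ is known), and for $s=5$ the elementary abelianness of $G_v^{[1]}$ rules out $\GL(2,3)$ but not the other non-split extensions of $S_3$ by $C_2^3$, so ``the same bookkeeping'' again conceals an appeal to the edge stabiliser or to the amalgam classification you mention at the end. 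None of this is a wrong turn --- it is exactly the route of Djokovi\v{c} and Miller --- but as written your proposal is a correct road map with the two hardest stretches (Tutte's bound and the structure of $G_v^{[1]}$) travelled by citation, which is admittedly no less than the paper itself does.
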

\begin{corollary}\label{cor:cubic}
Let $\Gamma$ be a cubic $(G,s)$-transitive graph. Then $\Gamma$ is $G$-star-transitive if and only if $s\geq 2$, while $\Gamma$ is $G$-star-transitive and $G$-st(edge)-transitive if and only if $s\geq 3$.
\end{corollary}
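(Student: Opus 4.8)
The plan is to deduce both equivalences from the explicit vertex stabilisers in the theorem above together with the general results of Section~\ref{s:observations}, so that the only genuinely new work is a single local counting argument.

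First I would settle star-transitivity. Because $\Gamma$ is $(G,s)$-transitive with $s\ge 1$ it is $G$-vertex-transitive, and as $\Gamma$ is cubic each $\Gamma(v)$ has three elements. When $s=1$ the theorem above gives $G_v=C_3$, so $G_v^{\Gamma(v)}$ has order dividing $3$ and cannot be $S_3$; hence $\Gamma$ is not $G$-star-transitive by Lemma~\ref{l:locally fully symmetric}. When $s\ge 2$ the graph is $(G,2)$-arc-transitive, so $G_v^{\Gamma(v)}$ is $2$-transitive on three points and therefore equals $S_3$; thus $G$ is locally fully symmetric and vertex-transitive, and Lemma~\ref{l:suff star}(\ref{suff1}) yields $G$-star-transitivity. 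This proves the first equivalence.

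For the second equivalence the forward implication is immediate: if $\Gamma$ is both $G$-star-transitive and $G$-st(edge)-transitive then Corollary~\ref{c:3-arc transitive}, applied with $k=3$, shows that $\Gamma$ is $(G,3)$-arc-transitive, i.e.\ $s\ge 3$. Conversely, suppose $s\ge 3$. Then $\Gamma$ is $G$-star-transitive by the first part, and it remains to establish $G$-st(edge)-transitivity. I would first note that $(G,2)$-arc-transitivity forces $\girth(\Gamma)\ge 4$: a triangle through a vertex would make every pair of neighbours of every vertex adjacent, forcing $\Gamma=K_4$, which is only $(G,2)$-transitive. Consequently, for an edge $\{u,v\}$ the two vertices $u,v$ together with $a_1,a_2\in\Gamma(u)\setminus\{v\}$ and $b_1,b_2\in\Gamma(v)\setminus\{u\}$ are six distinct vertices.

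The heart of the argument is then a local count. The edge-star isomorphisms of $\st(\{u,v\})$ form a group isomorphic to $S_2\Wr S_2$ of order $8$, and this group acts regularly on the set of eight $3$-arcs $(a_i,u,v,b_j)$ and $(b_j,v,u,a_i)$ with middle edge $\{u,v\}$: the action is transitive by inspection and faithful because each edge at $u$ or $v$ occurs as an extremal edge of some such $3$-arc. Each $g\in G_{\{u,v\}}$ restricts to an edge-star isomorphism, and $(G,3)$-arc-transitivity makes $G_{\{u,v\}}$ transitive on these eight $3$-arcs, since any $g\in G$ carrying one of them to another must fix their common middle edge. Since this induced action is transitive yet lands in a copy of $S_2\Wr S_2$ acting regularly, $G_{\{u,v\}}$ must induce all of $S_2\Wr S_2$; combined with edge-transitivity this gives $G$-st(edge)-transitivity. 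The hard part will be handling the case $\girth(\Gamma)=4$ (realised e.g.\ by $K_{3,3}$), where an edge-star isomorphism need not record the adjacencies among $a_1,a_2,b_1,b_2$ and so $\st(\{u,v\})$ cannot be identified with a graph restriction via Proposition~\ref{prop:equiv}(ii). The counting argument sidesteps this because each extending map is produced directly inside $\Aut(\Gamma)$ and hence automatically respects those outer adjacencies; the two points needing care are the regular action of $S_2\Wr S_2$ on the eight $3$-arcs and the reduction to girth at least $4$.
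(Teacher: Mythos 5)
Your proposal is correct and follows essentially the route the paper intends: the corollary is stated without proof as an immediate consequence of the Tutte/Djokovi\v{c}--Miller stabiliser list together with Lemmas~\ref{l:locally fully symmetric}, \ref{l:suff star}, \ref{lem:stedge} and Corollary~\ref{c:3-arc transitive}, and your argument fills in exactly those steps (ruling out $s=1$ via $G_v=C_3$, and $s=2$ implicitly via the forward direction of Corollary~\ref{c:3-arc transitive}). The only place you do genuinely new work --- the regular action of the order-$8$ group of edge-star self-isomorphisms on the eight $3$-arcs with middle edge $\{u,v\}$ --- is a correct and clean way to supply the converse for $s\ge 3$ that the paper leaves implicit, and it correctly handles the girth-$4$ subtlety that Proposition~\ref{prop:equiv}(ii) does not cover.
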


In the 4-regular case, a complete determination of the vertex stabilisers for 4-regular 2-arc transitive graphs was given by Potocnik \cite{P}, building on earlier work of Weiss \cite{W4}.

\begin{lemma}\label{val=4}
Let $\Gamma$ be $4$-regular, let $v \in \VG$, and let $G \le \Aut(\G)$. Then $\Gamma$ is $G$-star-transitive if and only if
one of the following statements holds:
\begin{enumerate}
\item $\Gamma$ is $(G,2)$-transitive, and $G_v=S_4$;

\item $\Gamma$ is $(G,3)$-transitive, and $G_v=S_4\times S_3$ or $G_v=(A_4\times C_3).2$ with the element of order 2 inducing a nontrivial automorphism of both $C_3$ and $A_4$;

\item $\Gamma$ is $(G,4)$-transitive, and $G_v=3^2{:}\GL(2,3)$;

\item $\Gamma$ is $(G,7)$-transitive, and $G_v=[3^5]{:}\GL(2,3)$.
\end{enumerate}
\end{lemma}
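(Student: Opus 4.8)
The plan is to first reduce $G$-star-transitivity, for a $4$-regular graph, to the single condition that $G$ be vertex-transitive with $G_v^{\G(v)} = S_4$, and then to read off the admissible vertex stabilisers from the known classification of $4$-valent $2$-arc-transitive graphs. For the reduction: if $\G$ is $G$-star-transitive then $G$ is vertex-transitive by Lemma~\ref{l:vertex transitive} (as $\G$ is $4$-regular) and $G_v^{\G(v)} = S_4$ by Lemma~\ref{l:locally fully symmetric}; conversely, if $G$ is vertex-transitive and $G_v^{\G(v)} = S_4$ then $\G$ is $G$-star-transitive by Lemma~\ref{l:suff star}(\ref{suff1}). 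Hence, for $4$-regular $\G$,
\[
\G \text{ is } G\text{-star-transitive} \iff G \text{ is vertex-transitive and } G_v^{\G(v)} = S_4.
\]
Since none of these three lemmas carries a girth hypothesis, this equivalence holds irrespective of the girth, so no separate treatment of the small-girth cases (such as $\G = K_5$, which will fall into case (1)) is needed.

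Next I would observe that $S_4$ is $2$-transitive of degree $4$, so $G_v^{\G(v)} = S_4$ forces $\G$ to be locally $(G,2)$-arc-transitive, and hence, being vertex-transitive, $(G,2)$-arc-transitive. Conversely, the only $2$-transitive groups of degree $4$ are $A_4$ and $S_4$, so any vertex-transitive locally $2$-transitive $4$-regular graph has $G_v^{\G(v)} \in \{A_4, S_4\}$, and the star-transitive ones are exactly those for which the induced action is the full symmetric group. The task therefore becomes: among all $4$-valent $(G,2)$-arc-transitive graphs, determine which satisfy $G_v^{\G(v)} = S_4$, and for these record the pair consisting of the degree of transitivity $s$ and the isomorphism type of $G_v$.

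For this I would invoke the complete determination of the vertex stabilisers of $4$-valent $2$-arc-transitive graphs due to Potocnik~\cite{P} (building on Weiss~\cite{W4}), which gives a finite list of possibilities for $G_v$, each occurring with a definite value of $s$. As $A_4$ and $S_4$ are primitive on four points, $G$ is locally primitive, so Theorem~\ref{t:arc kernel} supplies a prime $p$ with $G_{uv}^{[1]}$ a $p$-group; when $G_{uv}^{[1]} \neq 1$ one finds $p = 3$, which accounts for the $3$-groups $C_3$, $3^2$ and $[3^5]$ appearing in the kernels. For each entry of the list I would locate the kernel $G_v^{[1]}$ of the action on $\G(v)$ and compute $G_v^{\G(v)} = G_v/G_v^{[1]}$, retaining exactly the cases with quotient $S_4$: the faithful stabiliser $S_4$ ($s=2$); the stabilisers $S_4 \times S_3$ and $(A_4 \times C_3).2$ with $s = 3$ (in the latter the outer involution induces a transposition on $\G(v)$, extending $A_4$ to $S_4$); and the $3$-local stabilisers $3^2{:}\GL(2,3)$ ($s=4$) and $[3^5]{:}\GL(2,3)$ ($s=7$), in each of which $G_v^{[1]}$ contains the normal $3$-subgroup and the action on $\G(v)$ factors through $\GL(2,3)/\{\pm 1\} \cong \PGL(2,3) \cong S_4$. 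These are precisely cases (1)--(4); all remaining entries induce $A_4$ and are discarded.

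The main obstacle will be this case-by-case verification against Potocnik's list: for each listed stabiliser one must correctly identify $G_v^{[1]}$ and decide whether the action on the four neighbours is $A_4$ or $S_4$. This is immediate for the faithful and the direct-product stabilisers, but for the $3$-local groups $3^2{:}\GL(2,3)$ and $[3^5]{:}\GL(2,3)$ it requires the structural results of Weiss~\cite{W2} to pin down which normal subgroup acts trivially on $\G(v)$ and to confirm that the complement induces $\PGL(2,3) \cong S_4$; one must also check that the $A_4$-type stabilisers genuinely occur with a proper-subgroup action, so that they are legitimately excluded rather than coinciding with an $S_4$-case.
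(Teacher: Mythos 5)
Your proposal is correct, and its first half (the reduction of $G$-star-transitivity for a $4$-regular graph to ``$G$ vertex-transitive and $G_v^{\G(v)}=S_4$'' via Lemmas~\ref{l:vertex transitive}, \ref{l:locally fully symmetric} and \ref{l:suff star}) is exactly what the paper does; your observation that none of these lemmas carries a girth hypothesis is also consistent with the paper, which makes no girth distinction in this lemma. Where you diverge is in the second half. The paper does not filter Poto\v{c}nik's complete list of index-$(4,2)$ amalgams; instead it runs a direct local analysis, splitting into the three cases $G_v^{[1]}=1$, then $G_v^{[1]}\neq 1$ with $G_{vw}^{[1]}=1$ (where $G_v^{[1]}\cong (G_v^{[1]})^{\G(w)}\lhd G_{vw}^{\G(w)}\cong S_3$ forces $G_v^{[1]}\in\{C_3,S_3\}$ and hence $G_v=S_4\times S_3$, $C_3\times S_4$ or $(A_4\times C_3).2$), and finally $G_{vw}^{[1]}\neq 1$ (where Weiss's results give the two $3$-local stabilisers and the values $s=4,7$); it appeals to \cite{P} only once, to exclude $C_3\times S_4$. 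Your route buys brevity and automatically sidesteps the $C_3\times S_4$ exclusion, since a non-occurring amalgam is simply absent from Poto\v{c}nik's table, but it shifts the entire burden onto correctly reading that table: for each entry you must identify $G_v^{[1]}$ and decide whether the induced group is $A_4$ or $S_4$ (e.g.\ discarding $3^2{:}\SL(2,3)$ and $[3^5]{:}\SL(2,3)$, which induce only $\PSL(2,3)\cong A_4$), and you must extract the value of $s$ for each retained entry. The paper's kernel analysis is more self-contained and explains \emph{why} each stabiliser arises, at the cost of the extra citation and a slightly longer argument. Both are valid proofs of the lemma.
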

\begin{proof}
Suppose first that $G$ satisfies one of the conditions $(1)$--$(4)$. Then $G$ is locally $2$-transitive, so 
$G_v^{\G(v)}\cong A_4$ or $S_4$. None of the listed point stabilisers have a quotient group isomorphic to 
$A_4$, so $G_v^{\G(v)}\cong S_4$. Consequently, by Lemma~\ref{l:suff star},  $\Gamma$ is $G$-star-transitive.

Conversely, suppose that $\Gamma$ is $G$-star-transitive. 
By Lemma~\ref{l:locally fully symmetric}, $G_v^{\Gamma(v)}=S_4\cong\PGL(2,3)$. If $G_v^{[1]}=1$, then $G_v\cong G_v^{\Gamma(v)}=S_4$ and so the stabiliser $G_{uvw}$ of the 2-arc $uvw$ is isomorphic to $S_2$.  Hence $G_{uvw}$ is not transitive on the set of three 3-arcs beginning with $uvw$ and so $\Gamma$ is $(G,2)$-transitive.

Suppose that $G_v^{[1]}\not=1$ and let $\{ v,w\} \in \EG$.
If $G_{vw}^{[1]}=1$, then 
\[1\not=G_v^{[1]}\cong G_v^{[1]}/G_{vw}^{[1]}\cong (G_v^{[1]})^{\Gamma(w)}\lhd G_{vw}^{\Gamma(w)}\cong S_3.\]
Thus, $G_v^{[1]}=C_3$ or $S_3$, and so for $u\in\Gamma(v)\backslash\{w\}$ we have that $G_{uvw}$ induces either $C_3$ or $S_3$ on the set of three 3-arcs beginning with $uvw$. Hence $\Gamma$ is $(G,3)$-arc-transitive.   Moreover, since $G_{vw}^{[1]}=1$, it follows from \cite{W2} that  $\Gamma$ is not $(G,4)$-arc-transitive.  Since $S_3$ has no outer automorphisms, if $G_v^{[1]}=S_3$ then we must have $G_v=S_3\times S_4$. If $G_v^{[1]}=C_3$ then $G_v=C_3 \times S_4$ or $(C_3\times A_4).2$ with the element of order 2 inducing a nontrivial automorphism of both $C_3$ and $A_4$. However, the first case does not occur (see for example \cite[p.1330]{P}). Thus $G_v=S_3\times S_4$ or $(C_3\times A_4).2$ and in both cases $\Gamma$ is $(G,3)$-transitive.

Finally, assume that $G_{vw}^{[1]}\not=1$.
Then by \cite{W2}, $G_{vw}^{[1]}$ is a 3-group,
\[G_v=3^2{:}\GL(2,3),\ \mbox{or}\ [3^5]{:}\GL(2,3),\]
and $\Gamma$ is $(G,4)$-transitive or $(G,7)$-transitive, respectively.
\end{proof}

\vskip0.1in
\noindent{\bf Remarks:}
\begin{enumerate}
 \item In case (2) of Lemma \ref{val=4} we have $G_{vw}=S_3\times S_3$ or $G_{vw}=(C_3\times C_3).2$ with respectively $G_v^{[1]}=S_3$ or $C_3$.
 \item The stabiliser $G_v=3^2{:}\GL(2,3)$ is a parabolic subgroup of $\PGL(3,3)$, while
the stabiliser $G_v=[3^5]{:}\GL(2,3)$ is a parabolic subgroup of the exceptional group $G_2(3)$ of Lie type. In both cases $G_v^{\Gamma(v)}\cong\PGL(2,3)\cong S_4$ and $(G_v^{[1]})^{\Gamma(w)}\cong S_3$.
\end{enumerate}

\vskip0.1in

\begin{lemma}\label{val=4-2}
Assume that $\Gamma$ is of valency $4$, let $v \in \VG$, and let $G \le \Aut(\G)$.
Then $\Gamma$ is $G$-star-transitive and $G$-st(edge)-transitive if and only if 
one of the following is true.
\begin{itemize}
\item[(1)] $\Gamma$ is $(G,3)$-transitive, and $G_v=S_4\times S_3$;

\item[(2)] $\Gamma$ is $(G,4)$-transitive, and $G_v=3^2{:}\GL(2,3)$;

\item[(3)] $\Gamma$ is $(G,7)$-transitive, and $G_v=[3^5]{:}\GL(2,3)$.
\end{itemize}
\end{lemma}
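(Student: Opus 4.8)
The plan is to refine Lemma~\ref{val=4}, which already classifies the $G$-star-transitive valency-$4$ graphs, by imposing the extra st(edge)-transitivity hypothesis and reading it off through the criterion of Lemma~\ref{lem:edgeaction}. The whole argument ultimately turns on a single condition on the local kernel $G_v^{[1]}$, so I would first isolate that condition and then apply it in both directions.

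For the forward direction, suppose $\Gamma$ is both $G$-star-transitive and $G$-st(edge)-transitive. Since $\Gamma$ is $4$-regular, Corollary~\ref{c:3-arc transitive} shows $\Gamma$ is $(G,3)$-arc-transitive. Being star-transitive, $\Gamma$ falls into one of the four cases of Lemma~\ref{val=4}. Case~(1) there is $(G,2)$-transitive, hence not $(G,3)$-arc-transitive, and so is excluded; cases~(3) and~(4) give exactly our cases~(2) and~(3). The only remaining work is to eliminate the possibility $G_v=(A_4\times C_3).2$ inside case~(2) of Lemma~\ref{val=4}.

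The key step is to translate st(edge)-transitivity into a statement about $G_v^{[1]}$. Because $\Gamma$ is $4$-regular, Lemma~\ref{lem:edgeaction}(1) says that $\Gamma$ is st(edge)-transitive if and only if $G_{\{u,v\}}^{X(\{u,v\})}=S_3\Wr S_2$. The outer factor $S_2$ is realised by an element interchanging $u$ and $v$, which exists since $(G,3)$-arc-transitivity forces arc-transitivity; the base group $S_3\times S_3$ amounts to the arc stabiliser $G_{uv}$ inducing the full direct product on $(\Gamma(u)\setminus\{v\})\times(\Gamma(v)\setminus\{u\})$. As each projection is already onto $S_3$ (because $G_v^{\Gamma(v)}=S_4$), a Goursat-type argument reduces this to the condition $(G_v^{[1]})^{\Gamma(u)}=S_3$: the elements of $G_{uv}$ acting trivially on one neighbourhood are exactly $G_u^{[1]}$ or $G_v^{[1]}$, and one needs these to still induce all of $S_3$ on the opposite neighbourhood. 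Making this bookkeeping precise is the crux, and the main obstacle will be handling small girth correctly: if $\girth(\Gamma)\le 4$ then by Lemma~\ref{lem:girth 3} the graph is $K_{4,4}$, which lies in case~(1), so one may assume $\girth(\Gamma)\ge 5$ and identify $X(\{u,v\})$ with the restricted graph via Proposition~\ref{prop:equiv}.

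With this criterion both directions close quickly. In case~(2) of Lemma~\ref{val=4} one has $G_{vw}^{[1]}=1$, so $(G_v^{[1]})^{\Gamma(w)}\cong G_v^{[1]}$; this is $S_3$ when $G_v=S_4\times S_3$ but only $C_3$ when $G_v=(A_4\times C_3).2$. Hence the latter fails the $S_3\Wr S_2$ test and is eliminated, which completes the forward direction and leaves exactly our case~(1). For the converse, each of (1)--(3) is $G$-star-transitive by Lemma~\ref{val=4}, and the condition $(G_v^{[1]})^{\Gamma(w)}=S_3$ holds throughout: in case~(1) because $G_v^{[1]}=S_3$ and $G_{vw}^{[1]}=1$, and in cases~(2) and~(3) this is precisely Remark~2 following Lemma~\ref{val=4}. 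Thus st(edge)-transitivity follows from Lemma~\ref{lem:edgeaction}, finishing the proof.
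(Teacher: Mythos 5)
Your proposal is correct and follows essentially the same route as the paper: reduce to the cases of Lemma~\ref{val=4}, use Corollary~\ref{c:3-arc transitive} to kill case~(1), and use the $S_3\times S_3$ criterion of Lemma~\ref{lem:edgeaction} (equivalently, via your Goursat reduction, the condition $(G_v^{[1]})^{\Gamma(w)}=S_3$) to eliminate $(A_4\times C_3).2$ and to confirm the remaining stabilisers. The one caveat is your appeal to Remark~2 for $(G_v^{[1]})^{\Gamma(w)}\cong S_3$ when $G_v=3^2{:}\GL(2,3)$ or $[3^5]{:}\GL(2,3)$: in the paper that remark is not an independent input but a summary of what is established inside this very proof, namely that the normal subgroup structure forces $G_v^{[1]}=3^2{:}2$ (resp.\ $[3^5].2$), that connectivity and the transitivity of $G_v$ on $\Gamma(v)$ force $G_v^{[1]}$ to act nontrivially on $\Gamma(w)$, and that $(G_v^{[1]})^{\Gamma(w)}$ is then a nontrivial normal subgroup of $S_3$ of even order (since $G_{vw}^{[1]}$ is a $3$-group by Weiss), hence all of $S_3$. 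Supplying that short computation in place of the citation makes your argument self-contained and matches the paper's proof exactly.
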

\begin{proof}
By Corollary \ref{c:3-arc transitive} and Lemma \ref{lem:edgeaction}, if $\Gamma$ is $G$-star-transitive and $G$-st(edge)-transitive  then $\Gamma$ is $(G,3)$-arc transitive and $G_{vw}$ induces $S_3\times S_3$ on $(\Gamma(v)\cup\Gamma(w))\backslash \{v,w\}$. This rules out case (1) of Lemma \ref{val=4} and case (2) where $G_v=(A_4\times C_3).2$.  By Lemma \ref{lem:stedge} and  the remarks following Lemma \ref{val=4}, it follows that the case where $G_v=S_4\times S_3$ is $G$-st(edge)-transitive. It remains to prove that the $(G,4)$-arc-transitive graphs given in Lemma~\ref{val=4} $(3)$ and $(4)$ are $G$-st(edge)-transitive.

Suppose that $G_v=3^2{:}\GL(2,3)$. Since $G_v^{\Gamma(v)}\cong G_v/G_v^{[1]}$ is a transitive subgroup of $S_4$, the normal subgroup structure of $G_v$ implies that $G_v^{[1]}=3^2{:}2$, and $G_{vw}=3^2{:}2.S_3$.  Since $G_v^{[1]} \neq 1$,  it follows that there exists $u\in\Gamma(v)$ such that  $G_v^{[1]}$ acts on  $\Gamma(u)$ non-trivially; otherwise $G_v^{[1}=G_v^{[2]}$, and it follows from the connectivity of $\Gamma$ that $G_v^{[1]}$ fixes all vertices, contradicting $G_v^{[1]}\neq 1$. Since $G_v$ is transitive on $\Gamma(v)$ we deduce that  $G_v^{[1]}$ acts non-trivially on $\Gamma(w)$. Hence $1 \neq (G_v^{[1]})^{\Gamma(w)} \lhd G_{vw}^{\Gamma(w)} = S_3$. Recall also that in this case $G_{vw}^{[1]}$ is a 3-group and so $(G_v^{[1]})^{\Gamma(w)}$ has even order.  Thus $G_v^{[1]}/G_{vw}^{[1]}=S_3$ and so  $G_{vw}^{[1]}=C_3$.
Then we have
\[3.2.S_3\cong G_{vw}/G_{vw}^{[1]}\cong G_{vw}^{\Gamma(v)\cup\Gamma(w)}\le
G_{vw}^{\Gamma(v)}\times G_{vw}^{\Gamma(w)}\cong S_3\times S_3.\]
Therefore, $G_{vw}^{\Gamma(v)\cup\Gamma(w)}\cong S_3\times S_3$, and so by Lemma \ref{lem:stedge},
$\Gamma$ is $G$-st(edge)-transitive.

Next suppose that  $G_v=[3^5]{:}\GL(2,3)$.  Then the normal subgroup structure of $G_v$ implies that $G_v^{[1]}=[3^5].2$ and $G_{vw}=[3^5].2.S_3$. Arguing as in the previous case we deduce again that $G_v^{[1]}/G_{vw}^{[1]}\cong S_3$, and so $G_{vw}^{[1]}=[3^4]$. A similar argument to the previous case  reveals that  $G_{vw}^{\Gamma(v)\cup\Gamma(w)}\cong S_3\times S_3$ and so by Lemma \ref{lem:stedge}, $\Gamma$ is st(edge)-transitive.
\end{proof}

Next, we consider the case of valency at least 5.

\begin{lemma}\label{val>4}
Suppose that $\Gamma$ is of valency $r\geq 5$, let $v \in \VG$, and let $G \le \Aut(\G)$. Then $\Gamma$ is $G$-star-transitive if and only if one of the following holds.

\begin{enumerate}[$(1)$]
\item $\Gamma$ is $(G,2)$-transitive and $G_v=S_r$;

\item $\Gamma$ is $(G,3)$-transitive, $r=7$,  $G_v=S_7$ and $G_{\{u,v\}}=\Aut(A_6)$;

\item $\Gamma$ is $(G,3)$-transitive, and $G_v=S_r\times S_{r-1}$ or $(A_r\times A_{r-1}).2$ with the element of order 2 inducing a nontrivial outer automorphism of both $A_r$ and $A_{r-1}$;

\item $r=5$, $\Gamma$ is $(G,4)$-transitive and $G_v=[4^2]{:}\GammaL(2,4)$;

\item $r=5$, $\Gamma$ is $(G,5)$-transitive and  $G_v=[4^3]{:}\GammaL(2,4)$.
\end{enumerate}
Moreover, $\Gamma$ is $G$-star-transitive and $G$-st(edge)-transitive if and only if
$\Gamma$ is $(G,3)$-transitive and $G_v=S_r\times S_{r-1}$.
\end{lemma}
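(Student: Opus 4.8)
The statement has two halves. The first (the classification of $G$-star-transitivity for valency $r\ge 5$) is proved by combining the local-action machinery of this paper with the deep structural results of Weiss~\cite{W2} and Gardiner's theorem (Theorem~\ref{t:arc kernel}) exactly as in the valency-$4$ case (Lemma~\ref{val=4}); I will take the first half as given and concentrate on the final sentence, which is an \emph{iff} statement asserting that among the five star-transitive possibilities, only case~$(3)$ with $G_v=S_r\times S_{r-1}$ is also $G$-st(edge)-transitive. The plan is to show that $G$-st(edge)-transitivity forces us into case~$(3)$, then that it rules out the companion group $(A_r\times A_{r-1}).2$, and finally that $S_r\times S_{r-1}$ genuinely is st(edge)-transitive.

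The first reduction is immediate from the results already assembled. By Corollary~\ref{c:3-arc transitive}, if $\Gamma$ is $G$-star-transitive and $G$-st(edge)-transitive then it is $(G,3)$-arc-transitive; since the valencies in $\Gamma$ are constant, equal to $r$, being $(G,3)$-transitive is exactly the level reached in case~$(3)$. This kills cases~$(1)$, $(2)$, $(4)$ and $(5)$ outright (in $(1)$ the graph is only $(G,2)$-transitive; in $(2)$, $(4)$, $(5)$ the transitivity level or the group structure is wrong and, crucially, in those cases $G_{vw}^{[1]}\neq 1$ is a nontrivial $p$-group by Theorem~\ref{t:arc kernel}, so $G_{vw}$ cannot induce the full $S_{r-1}\times S_{r-1}$ required by Lemma~\ref{lem:edgeaction}). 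Here I will invoke Lemma~\ref{lem:edgeaction}(1): st(edge)-transitivity of an $r$-regular graph forces $G_{\{u,v\}}^{X(\{u,v\})}=S_{r-1}\Wr S_2$, and in particular $G_{vw}^{(\Gamma(v)\cup\Gamma(w))\setminus\{v,w\}}\cong S_{r-1}\times S_{r-1}$.

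It remains to distinguish the two groups in case~$(3)$ and to verify the surviving case. For $G_v=(A_r\times A_{r-1}).2$, the key point is that $G_{vw}^{[1]}=1$ in this regime (as in the corresponding analysis of Lemma~\ref{val=4}, case~$(3)$), so $G_{vw}$ embeds into $G_{vw}^{\Gamma(v)}\times G_{vw}^{\Gamma(w)}$; computing $|G_{vw}|$ from the index $|G_v:G_{vw}|=r$ and comparing with $|S_{r-1}\times S_{r-1}|$ shows that the diagonal ``$.2$'' glueing prevents $G_{vw}^{(\Gamma(v)\cup\Gamma(w))\setminus\{v,w\}}$ from being the full direct product $S_{r-1}\times S_{r-1}$, so Lemma~\ref{lem:edgeaction} fails and this group is \emph{not} st(edge)-transitive. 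Conversely, for $G_v=S_r\times S_{r-1}$ one checks directly that $G_{vw}=S_{r-1}\times S_{r-1}$ (taking the point stabiliser of the $S_r$ factor acting on $\Gamma(v)$ and retaining the $S_{r-1}$ factor), that this induces precisely $S_{r-1}\times S_{r-1}$ on $(\Gamma(v)\cup\Gamma(w))\setminus\{v,w\}$, and that the arc-transitivity supplies the extra $S_2$ swapping the two ends, giving $G_{\{u,v\}}^{X(\{u,v\})}=S_{r-1}\Wr S_2$; Lemma~\ref{lem:edgeaction}(1) then yields st(edge)-transitivity.

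The main obstacle is the elimination of $(A_r\times A_{r-1}).2$: I must argue carefully that the permutation group it induces on the neighbours is a proper (index-$2$, ``even'') subgroup of $S_{r-1}\times S_{r-1}$ rather than the whole product, which is a group-theoretic order-and-parity computation relying on the precise way the outer ``$.2$'' simultaneously twists both alternating factors. Everything else is bookkeeping with the structural dictionary already provided by Lemmas~\ref{lem:edgeaction} and~\ref{l:suff star} and Corollary~\ref{c:3-arc transitive}.
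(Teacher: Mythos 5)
There are two genuine problems. First, you prove only the final sentence of the lemma and ``take the first half as given.'' The classification of $G$-star-transitive graphs of valency $r\ge 5$ is the bulk of the lemma and does not follow ``exactly as in the valency-$4$ case'': the paper's argument needs, among other things, the delicate $r=6$ subcase in the forward direction (where $G_v$ could a priori induce $S_5$ rather than $S_6$ on $\Gamma(v)$), the exclusion of $G_v^{[1]}=2^2$ for $r=5$ via Morgan's lemma, the normal-subgroup arguments ruling out $G_v=S_r\times A_{r-1}$ and $G_v=(A_7\times S_6).2$ (by showing $G_v^{[1]}$ would be characteristic in $G_{vw}$ and hence normal in $G=\langle G_v,G_{\{v,w\}}\rangle$), and the two inequivalent degree-$6$ actions of $S_6$ that produce case $(2)$. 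None of this is addressed, so the main content of the lemma is unproved.

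Second, in the half you do address, the elimination of cases $(4)$ and $(5)$ is wrong. You argue that $G_{vw}^{[1]}\neq 1$ being a nontrivial $p$-group prevents $G_{vw}$ from inducing $S_{r-1}\times S_{r-1}$ on $(\Gamma(v)\cup\Gamma(w))\setminus\{v,w\}$. That is a non sequitur: a nontrivial kernel does not bound the quotient, and indeed in the valency-$4$ analogue (Lemma~\ref{val=4-2}, cases $(2)$ and $(3)$) the groups with $G_{vw}^{[1]}\neq 1$ \emph{are} st(edge)-transitive. A size count does not save you either: for $G_v=[4^2]{:}\GammaL(2,4)$ one has $|G_{vw}|=1152>|S_4\times S_4|=576$. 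The correct obstruction, used in the paper, is that full induced action forces $(G_v^{[1]})^{\Gamma(w)}\cong S_4$, i.e.\ $G_v^{[1]}$ must have $S_4$ as a quotient, and this fails (verified in the paper by a \textsf{GAP} computation). Your stated reason for case $(2)$ is also false as written ($G_{vw}^{[1]}=1$ there, since $G_v\cong S_7$ acts faithfully on $\Gamma(v)$), though an order count does dispose of that case. Your treatment of $(A_r\times A_{r-1}).2$ by the index-$2$ order comparison, and your verification that $G_v=S_r\times S_{r-1}$ is st(edge)-transitive, do match the paper's reasoning.
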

\begin{proof}
Suppose that $\G$ and $G$ satisfy one of $(1)$  - $(5)$. Then $G_v^{\Gamma(v)}$ is 2-transitive. Except when both $r=6$ and case (3) holds, the only 2-transitive quotient group of $G_v$ on $r$ points is $S_r$ and so $G$ is locally fully symmetric. Thus by Lemma \ref{l:suff star}, $\Gamma$ is $G$-star-transitive. It remains to consider case (3) when $r=6$. Here $G_v$ has $S_6$ and $S_5$ as 2-transitive factor groups of degree 6. If $G_v^{\Gamma(v)}=S_5$ acting 2-transitively on 6 points it follows that for a 2-arc $uvw$ we have  $G_{uvw}=S_6\times C_4$, which does not have a transitive action of degree 5, contradicting $G$ being 3-arc transitive. Thus $G_v^{\Gamma(v)}=S_6$ and so $\Gamma$ is $G$-star-transitive in this case as well.

Conversely, suppose that $\Gamma$ is $G$-star-transitive.
If $G_v^{[1]}=1$, then $G_v\cong G_v^{\Gamma(v)}\cong S_r$ and so $\Gamma$ is $(G,2)$-arc-transitive. For $u\in\Gamma(v)$ we have $G_{uv}=S_{r-1}$ which acts faithfully on both $\Gamma(u)\backslash\{v\}$ and $\Gamma(v)\backslash\{u\}$. For $r-1\neq 6$ this implies that the actions are equivalent, that is, the stabiliser of a vertex in one action fixes a vertex in the other. So for a 2-arc $wvu$ we have that $G_{wvu}$ fixes an element of $\Gamma(u)\backslash\{v\}$ and hence $\Gamma$ is not $(G,3)$-arc-transitive.  For $r=7$, $S_{r-1}$ has two inequivalent actions of degree 6 that are interchanged by an outer automorphism. The stabiliser of a point in one action is transitive in the other action and so $\Gamma$ will be $(G,3)$-arc-transitive if and only if $G_{\{u,v\}}=\Aut(S_6)$. Moreover, $\Gamma$ is not $(G,4)$-arc-transitive in this case as the stabiliser of a 3-arc is $C_5\rtimes C_4$, which does not have a transitive action of degree 6.

If $G_v^{[1]}\not=1$ and $G_{vw}^{[1]}=1$, then we have
\[1\not=G_v^{[1]}\cong G_v^{[1]}/G_{vw}^{[1]}\cong(G_v^{[1]})^{\Gamma(w)}\lhd G_{vw}^{\Gamma(w)}\cong S_{r-1}.\]
Thus either $G_v^{[1]}=A_{r-1}$ or $S_{r-1}$, or $r=5$ and $G_v^{[1]}=2^2$. The last case is eliminated by \cite[Lemma 5.3]{M}. Thus for  $u\in\Gamma(v)\backslash\{w\}$ we have that $G_{uvw}$ induces either $A_{r-1}$ or $S_{r-1}$ on the set of $r-1$ 3-arcs beginning with $uvw$. Hence $\Gamma$ is $(G,3)$-arc-transitive.   Moreover, since $G_{vw}^{[1]}=1$, it follows from \cite{W2} that  $\Gamma$ is not $(G,4)$-arc-transitive.  If $G_v^{[1]}=S_{r-1}$, since $S_{r-1}$ has no outer automorphisms for $r\neq 7$, it follows that either $G_v=S_r\times S_{r-1}$ or $r=7$ and $G_v=(A_7\times S_6).2$ with elements of  $G_v\backslash (A_7\times S_6)$ inducing an outer automorphism of both $S_6$ and $A_7$.  Suppose that we have the latter case. Then $G_{vw}=(A_6\times S_6).2$. However, in such a group $G_v^{[1]}$ is a characteristic subgroup of $G_{vw}$ as it is the only normal subgroup isomorphic to $S_6$. Thus $G_v^{[1]}$ is normalised in $G_{\{v,w\}}$ and hence normal in $\langle G_v,G_{\{v,w\}}\rangle=G$. Hence $G_v$ contains a nontrivial normal subgroup of $G$,  contradicting the action on $V\Gamma$ being faithful. Thus if $G_v^{[1]}=S_{r-1}$ then $G_v=S_r\times S_{r-1}$.

 If $G_v^{[1]}=A_{r-1}$ then $G_v=S_r\times A_{r-1}$ or $(A_r\times A_{r-1}).2$.  However, in the first case we can argue as above to show that $G_v^{[1]}\lhd G$ again yielding a contradiction. (In this case $G_v^{[1]}$ is characteristic in $G_{vw}$ as it is the only normal subgroup isomorphic to $A_{r-1}$ not contained in an $S_{r-1}$.) Similarly in the second case, elements of $G_{vw}\backslash (A_{r-1}\times A_{r-1})$ must induce nontrivial outer automorphisms of both normal subgroups isomorphic to $A_{r-1}$ and so $G_v= (A_r\times A_{r-1}).2$ with $G_v/A_r\cong S_{r-1}$ and $G_v/A_{r-1}\cong S_r$.

Next, assume that $G_{vw}^{[1]}\not=1$.
By Theorem~\ref{t:arc kernel}, $G_{vw}^{\Gamma(v)} \cong  S_{r-1}$ is $p$-local.
Thus, $r=5$, and $p=2$.
Further, by \cite{W2}, either $\Gamma$ is $(G,4)$-transitive, and
$G_v=[4^2]{:}\GL(2,4)$ or $[4^2]{:}\GammaL(2,4)$ (that is, the maximal parabolics in $\PGL(3,4)$ or $\PGaL(3,4)$ respectively),
or $\Gamma$ is $(G,5)$-transitive, and 
$G_v=[4^3]{:}\GL(2,4)$ or $[4^3]{:}\GammaL(2,4)$ (that is, the maximal parabolics in $\PSp(4,4)$ or  $\PGaSp(4,4)$ respectively). Since neither $[4^2]{:}\GL(2,4)$ nor $[4^3]{:}\GL(2,4)$ have $S_5$ as a quotient group they cannot occur.

We also have to establish which graphs in the list are st(edge)-transitive. 
If $G_v=S_r\times S_{r-1}$ in case $(3)$ 
then $G_{vw}=S_{r-1}\times S_{r-1}$ acts faithfully on $(\Gamma(v)\cup\Gamma(w))\backslash\{v,w\}$ and so the arc-transitivity of $\Gamma$ together with Lemma \ref{lem:stedge} implies that $\Gamma$ is $G$-st(edge)-transitive. In all other graphs in cases $(1)$, $(2)$ and $(3)$, $G_v$ is too small to satisfy the necessary condition in Lemma~\ref{lem:edgeaction}. In cases (3) and (4), Lemma \ref{lem:stedge} implies that to be $G$-st(edge)-transitive we must have that $G_v^{[1]}$ has $S_4$ as a quotient group. However, a simple \textsf{GAP} \cite{gap} computation shows that this is not the case.
\end{proof}

\vskip0.1in
\noindent{\bf Remark:} 
The stabiliser $G_v=[4^2]{:}\GammaL(2,4)$ is a parabolic subgroup in $\PGaL(3,4)$ while the stabiliser $G_v=[4^3]{:}\GammaL(2,4)$ is a parabolic subgroup in $\PGaSp(4,4)$. In both cases $G_v^{\Gamma(v)}\cong \PGaL(2,4)\cong S_5$.

\vskip0.1in
\noindent{\bf Remark:} The Hoffman--Singleton graph with automorphism group $\PSU(3,5).2$ on 50 vertices is an example of a graph in case (2).

\vskip0.1in
Combining the lemmas in this section, we obtain a characterisation of graphs which are vertex-transitive,
star-transitive and st(edge)-transitive, as stated in Theorem~\ref{v-star-st(edge)-t}.

\section{The vertex intransitive case}
\label{s:vertex intran}

In this section, we study connected star-transitive and st(edge)-transitive graphs $\Gamma$ with vertex-intransitive automorphism groups. Such graphs must be bipartite, of bivalency $\{\ell,r\}$ for some $\ell \ne r$. 
The analogue of Theorem~\ref{t:arc kernel} was proved by  van Bon~\cite{vB1}.

\begin{theorem}
\label{t:vB}
Let $\Gamma$ be a connected graph and $G\leqslant \Aut(\Gamma)$ such that $G_v^{\Gamma(v)}$ is primitive for all vertices $v$.  Then for an edge $\{v,w\}$ either:
\begin{itemize}
\item[(i)] $G_{vw}^{[1]}$ is a $p$-group; or
\item[(ii)] (possibly after interchanging $v$ and $w$), $G_{vw}^{[1]}=G_v^{[2]}$, and $G_w^{[2]}=G_w^{[3]}$ is a $p$-group.
\end{itemize}
\end{theorem}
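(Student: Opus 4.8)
This is a Thompson--Wielandt-type result, and the plan is to adapt to the bipartite setting the local-group-theoretic argument that underlies the vertex-transitive analogue Theorem~\ref{t:arc kernel}. First I would dispose of the degenerate case: if $G_{vw}^{[1]}=1$ then it is vacuously a $p$-group and conclusion (i) holds, so assume $G_{vw}^{[1]}\neq 1$. The aim is then to bound the primes dividing $|G_{vw}^{[1]}|$, and the only hypothesis available for this is the primitivity of the two local actions $G_v^{\Gamma(v)}$ and $G_w^{\Gamma(w)}$.

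The structural input I would exploit is that $G_v^{[1]}$, being the kernel of the action of $G_v$ on $\Gamma(v)$, is normal in $G_{vw}\le G_v$; hence its image $(G_v^{[1]})^{\Gamma(w)}$ is a \emph{normal} subgroup of $G_{vw}^{\Gamma(w)}$, and the latter is the stabiliser of the point $v$ in the primitive group $G_w^{\Gamma(w)}$, hence a maximal subgroup. The analogous statement holds after interchanging $v$ and $w$. The heart of the matter is then the Thompson--Wielandt lemma on finite groups: writing the relevant stabiliser as $\langle A,B\rangle$ where $A\cap B$ has controlled $p$-local structure, one deduces that a distinguished section is a $p$-group. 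I would take $A$ and $B$ to be the neighbour-stabiliser data coming from $v$ and $w$, and feed in the maximality just established to pin down the $p'$-part of $G_{vw}^{[1]}$.

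The bifurcation into (i) and (ii) is precisely where the absence of edge-transitivity enters. In the symmetric situation the preceding analysis applies on both sides of the edge and forces $G_{vw}^{[1]}$ itself to be a $p$-group, giving (i). When the two local actions interact asymmetrically the argument closes only on one side: after relabelling so that $w$ is the ``bad'' vertex, the second- and third-ball stabilisers at $w$ collapse to a single $p$-group $G_w^{[2]}=G_w^{[3]}$, while the identity $G_{vw}^{[1]}=G_v^{[2]}$ records that fixing the unit balls about $v$ and $w$ already forces all of $\Gamma_2(v)$ to be fixed (note $G_v^{[2]}\le G_{vw}^{[1]}$ always, since the neighbours of $w$ lie within distance two of $v$). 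Verifying that the obstruction can occur on at most one side, so that a single interchange of $v$ and $w$ suffices, is the delicate bookkeeping.

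The step I expect to be hardest is the group-theoretic core: establishing, ideally without the classification of finite simple groups, that the $p'$-part of $G_{vw}^{[1]}$ is trivial. This combines a coprime-action analysis of the commuting behaviour of $G_v^{[1]}$ and $G_w^{[1]}$ inside $G_{vw}$ with the Thompson--Wielandt lemma; and because the bivalencies $\ell$ and $r$ genuinely differ, one cannot symmetrise the argument as in the edge-transitive case but must analyse the two primitive actions separately, reconciling them only at the end.
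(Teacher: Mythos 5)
The first thing to say is that the paper does not prove this statement at all: Theorem~\ref{t:vB} is quoted verbatim from van Bon's paper \emph{Thompson--Wielandt-like theorems revisited} (reference [vB1]), and the authors use it as a black box in Section~\ref{s:vertex intran}. So there is no in-paper proof to compare against; the relevant comparison is with van Bon's argument, which is a genuinely delicate piece of local group theory.

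Measured against that, your proposal is a statement of intent rather than a proof, and the gap is exactly where you say you expect it to be. The preliminary observations you make are correct and are indeed the standard setup: $G_{vw}^{[1]}=1$ trivially gives (i); $(G_v^{[1]})^{\Gamma(w)}\lhd G_{vw}^{\Gamma(w)}$ with $G_{vw}^{\Gamma(w)}$ maximal in the primitive group $G_w^{\Gamma(w)}$; and $G_v^{[2]}\le G_{vw}^{[1]}$ always holds. But everything after that is deferred: you do not exhibit the coprime-action or Thompson--Wielandt lemma you intend to invoke, you do not show how primitivity of the two (possibly very different) local actions kills the $p'$-part of $G_{vw}^{[1]}$, and --- most importantly --- you do not derive the specific shape of alternative (ii), namely the equalities $G_{vw}^{[1]}=G_v^{[2]}$ and $G_w^{[2]}=G_w^{[3]}$ together with the assertion that the latter is a $p$-group. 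Your framing of the dichotomy as ``where the absence of edge-transitivity enters'' is also not quite right: the theorem makes no transitivity hypothesis at all, and the (i)/(ii) split is an intrinsic alternative in van Bon's analysis, not a case division on whether $G$ is edge-transitive. As it stands the proposal identifies the correct genre of argument but leaves its entire engine unbuilt; to repair it you would either have to reproduce van Bon's proof or cite it, which is what the paper itself does.
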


Assume that $G\le\Aut(\Gamma)$, and $\G$ is $G$-star-transitive and $G$-st(edge)-transitive.
Let $uvwx$ be a $3$-arc of $\Gamma$, and suppose that $|\Gamma(v)|=r$ and $|\Gamma(w)|=\ell$. We note that $G_{vw}^{[1]}$ acts on both $\Gamma(u)$ and $\Gamma(x)$.

\begin{lemma}\label{3,5}
Assume that $G_{vw}^{[1]}$ acts non-trivially on both $\Gamma(u)$ and $\Gamma(x)$.
Then $\{\ell,r\}=\{3,5\}$.
\end{lemma}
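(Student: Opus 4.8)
\textbf{The plan is to} exploit the structure imposed by Theorem~\ref{t:vB} together with the constraints coming from st(edge)-transitivity. Since $\G$ is $G$-star-transitive and $G$-st(edge)-transitive with minimal valency at least three, Lemma~\ref{l:locally fully symmetric} gives that $G_v^{\G(v)} = S_r$ and $G_w^{\G(w)} = S_\ell$, so both actions on neighbourhoods are the full symmetric groups; in particular $G_v^{\G(v)}$ and $G_w^{\G(w)}$ are primitive (as $r, \ell \geq 3$), so Theorem~\ref{t:vB} applies to the edge $\{v,w\}$. The key object is the $1$-step kernel $G_{vw}^{[1]}$, which is assumed to act nontrivially on both $\G(u)$ (where $u \in \G(v)\setminus\{w\}$) and $\G(x)$ (where $x \in \G(w)\setminus\{v\}$). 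First I would observe that, under this nontriviality hypothesis, case~(ii) of Theorem~\ref{t:vB} is impossible: case~(ii) forces (after possibly swapping $v$ and $w$) $G_{vw}^{[1]} = G_v^{[2]}$, but $G_v^{[2]}$ fixes pointwise the ball of radius $2$ about $v$, hence fixes every vertex of $\G(u)$ for $u \in \G(v)$, contradicting nontrivial action on $\G(u)$. Therefore we are in case~(i), and $G_{vw}^{[1]}$ is a $p$-group for some prime $p$.

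Next I would analyse the induced actions. Since $\G$ is locally $(G,3)$-arc transitive (Lemma~\ref{lem:3 arc}) with $G_v^{\G(v)} = S_r$, the action of $G_{vw}^{[1]}$ on $\G(u)\setminus\{v\}$, which has size $\ell-1$ (as $u$ has valency $\ell$), is normalised by $G_{uvw}^{\G(u)}$, and the latter induces the full $S_{\ell-1}$ on $\G(u)\setminus\{v\}$ by the $3$-arc-transitivity argument used in Lemma~\ref{val=4}. Thus the nontrivial normal image $1 \neq (G_{vw}^{[1]})^{\G(u)\setminus\{v\}} \lhd S_{\ell-1}$, and since $G_{vw}^{[1]}$ is a $p$-group, this image is a normal $p$-subgroup of $S_{\ell-1}$. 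The only way $S_{m}$ (for $m = \ell-1$) contains a nontrivial normal $p$-subgroup is when $m \leq 2$, giving $\ell - 1 \in \{1, 2\}$, i.e. $\ell \in \{2,3\}$ — but since minimal valency is at least three we get $\ell = 3$ and $p = 2$ (the normal $2$-subgroup being $A_2 = S_2$ inside $S_2$). Running the symmetric argument with the action on $\G(x)\setminus\{w\}$ of size $r-1$ forces a nontrivial normal $p$-subgroup of $S_{r-1}$; but $p$ is already pinned to $2$ by the first half of the argument, so $r-1 \in \{1,2\}$ as well.

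\textbf{The main obstacle} I anticipate is that the naive symmetric conclusion would force $\{\ell, r\} \subseteq \{2,3\}$, which is too strong and contradicts $\ell \neq r$ with both at least three; so the estimates above must be applied asymmetrically, because $p$ need not be $2$ for \emph{both} neighbourhoods. The correct bookkeeping is that $(G_{vw}^{[1]})^{\G(u)\setminus\{v\}}$ is a normal $p$-subgroup of $S_{\ell-1}$ for the \emph{same} prime $p$ as $(G_{vw}^{[1]})^{\G(x)\setminus\{w\}}$ in $S_{r-1}$, since $G_{vw}^{[1]}$ is a single $p$-group. A normal $p$-subgroup of $S_m$ is nontrivial only for $m \leq 2$ when $p=2$, but when $m = p$ one can have the \emph{wrong} structural constraint unless we use that we need a normal $p$-subgroup, and $S_m$ has a nontrivial normal $p$-subgroup precisely when $m \le 2$. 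Hence both $\ell - 1 \le 2$ and $r-1 \le 2$; combined with minimal valency $\ge 3$ this gives $\ell, r \in \{3\}$, collapsing the distinctness — so the real resolution must be subtler: the nontrivial action on $\G(u)$ rather than on $\G(u)\setminus\{v\}$, or an appeal to a theorem bounding the local structure (analogous to Weiss~\cite{W2}, invoked in Lemma~\ref{val>4}) that gives the sporadic small-valency possibilities. I expect the final step to invoke such a classification of amalgams or van Bon's refinement to isolate exactly $\{\ell,r\} = \{3,5\}$, with one side supporting the $p$-group kernel on the valency-$3$ side and the other on the valency-$5$ side, and the careful identification of which prime goes with which valency being where the genuine work lies.
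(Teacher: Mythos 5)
There is a genuine gap. Your opening is sound and matches the paper: the hypothesis that $G_{vw}^{[1]}$ moves points of both $\Gamma(u)$ and $\Gamma(x)$ rules out conclusion (ii) of Theorem~\ref{t:vB} (since $G_v^{[2]}$ fixes $\Gamma(u)$ pointwise and $G_w^{[2]}$ fixes $\Gamma(x)$ pointwise), so $G_{vw}^{[1]}$ is a $p$-group for a single prime $p$, and its image in $G_{uv}^{\Gamma(u)}\cong S_{\ell-1}$ (respectively $G_{wx}^{\Gamma(x)}\cong S_{r-1}$) is a nontrivial subnormal $p$-subgroup. The fatal error is the group-theoretic claim that ``$S_m$ has a nontrivial normal $p$-subgroup precisely when $m\le 2$.'' This is false: $S_3$ has the normal $3$-subgroup $A_3\cong C_3$ and $S_4$ has the normal Klein four-group; only for $m\ge 5$ is every nontrivial (sub)normal subgroup equal to $A_m$ or $S_m$ and hence not a $p$-group. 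The correct conclusion is therefore $\ell-1\le 4$ and $r-1\le 4$, i.e.\ $\ell,r\le 5$, not $\ell,r\le 3$. Because you derived a bound that is too strong, you correctly sensed a contradiction with $\ell\ne r$ and then abandoned the computation, deferring to an unspecified ``classification of amalgams or van Bon's refinement.'' No such external input is needed, and the proof is left incomplete.

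The missing finishing step is exactly the asymmetric bookkeeping you gestured at but did not carry out: the prime $p$ is determined by each valency. A nontrivial subnormal $p$-subgroup of $S_{m}$ forces $p=2$ when $m\in\{2,4\}$ (valency $3$ or $5$) and $p=3$ when $m=3$ (valency $4$). Since $G_{vw}^{[1]}$ is one $p$-group acting nontrivially on both sides, the two valencies must yield the \emph{same} prime. If, say, $r=4$, then $p=3$, while $\ell\in\{3,5\}$ (as $\ell\ne r$ and $\ell\le 5$) forces $p=2$ --- a contradiction. Hence neither valency is $4$, and since $\ell\ne r$ with $3\le \ell,r\le 5$, we get $\{\ell,r\}=\{3,5\}$ with $p=2$. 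This is precisely the paper's argument (the paper reaches subnormality via the chain $G_{vw}^{[1]}\lhd G_w^{[1]}\lhd G_{wx}$ acting on $\Gamma(x)$; your normality-in-$S_{\ell-1}$ set-up via $G_{uvw}$ would also work once the normal subgroup structure of $S_3$ and $S_4$ is handled correctly).
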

\begin{proof}
Since $G_{vw}^{[1]}$ acts non-trivially on $\Gamma(u)$ it follows that $G_{vw}^{[1]}\neq G_v^{[2]}$ and so Theorem~\ref{t:vB} implies that $G_{vw}^{[1]}$ is a $p$-group. Since 
$G_{vw}^{[1]}\lhd G_w^{[1]}\lhd G_{wx}$ and $G_{vw}^{[1]}$ acts non-trivially on $\Gamma(x)$, we have
\[1\not=(G_{vw}^{[1]})^{\Gamma(x)}\lhd (G_w^{[1]})^{\Gamma(x)}\lhd G_{wx}^{\Gamma(x)}.\]
Thus, $G_{wx}^{\Gamma(x)}\cong S_{r-1}$ has a subnormal $p$-subgroup, and similarly,  so does $G_{uv}^{\Gamma(u)}\cong S_{\ell-1}$.
Hence $r,\ell\le 5$.
If $r=4$, then $G_{vw}^{\Gamma(v)}\cong S_3$ and $p=3$.
As $\ell\not=r$, we have $\ell=3$ or 5, and thus $G_{vw}^{\Gamma(w)}=S_2$ or $S_4$,
which do not have subnormal $3$-subgroups. This is a contradiction.
Thus, $r\not=4$, and similarly, $\ell\not=4$. Hence $\{\ell,r\}=\{3,5\}$.
\end{proof}

\begin{example}
\label{eg:hermitian}
Let $\Gamma$ be the point-line incidence graph of the generalised quadrangle of order $(2,4)$ arising from a nondegenerate Hermitian form on a 4-dimensional vector space over $\GF(4)$. That is, the points are the totally isotropic 1-spaces, the lines are the totally isotropic 2-spaces, and a 1-space and 2-space are incident if one is contained in the other. Let $G=\PGammaU(4,2)=\PSU(4,2).2$, the full automorphism group of $\Gamma$, and let $(w,v)$ be an incident point-line pair. Then $G_v=2^4{:}S_5$, and $G_w=2.(A_4\times A_4).2^2$
with $G_v\cap G_w=2^4.S_4$.
Then $|G_v:G_v\cap G_w|=5$ and $|G_w:G_v\cap G_w|=3$, and $\Gamma$ is locally $(G,4)$-arc-transitive
of bivalency $\{3,5\}$. It follows from Lemma \ref{l:suff star} that $\Gamma$ is star-transitive.
We have $G_v^{[1]}=2^4$, and $G_{vw}^{[1]}=2^3$. Thus $G_{vw}/G_{vw}^{[1]}=S_4\times S_2$, and so by Lemma \ref{lem:stedge}, $\Gamma$ is st(edge)-transitive.
\end{example}
\vskip0.1in

\begin{lemma}\label{[2]=1}
Assume that $G_w^{[1]}\not=1$ and $G_w^{[2]}=1$.
Then one of the following holds:
\begin{itemize}
\item[(i)] $G_{vw}^{[1]}=1$, $G_v=S_r\times S_{\ell-1}$ and $G_w=S_{\ell}\times S_{r-1}$;

\item[(ii)] $(A_{r-1})^{\ell-1}\leqslant G_{vw}^{[1]}\leqslant (S_{r-1})^{\ell-1}$,  
\[\begin{array}{rllll}
(A_r\times (A_{r-1})^{\ell-1}).2.S_{\ell-1}&\leqslant &G_v&\leqslant&S_r\times (S_{r-1}\Wr S_{\ell-1}), \mbox{and}\\

(A_{r-1})^{\ell}.2.S_\ell& \leqslant& G_w &\leqslant &S_{r-1}\Wr S_\ell; \textrm{ or}
\end{array}\]

\item[(iii)] 
$|\Gamma(v)|\le 5$.

\end{itemize}
\end{lemma}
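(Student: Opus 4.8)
The plan is to run a local analysis of the stabilisers $G_v$, $G_w$, $G_{vw}$ and their pointwise fixators, exactly in the spirit of the vertex-transitive Lemmas~\ref{val=4}--\ref{val>4}, but now using van Bon's Theorem~\ref{t:vB} in place of Theorem~\ref{t:arc kernel}. Since $\Gamma$ is $G$-star-transitive and $G$-st(edge)-transitive of minimum valency at least three, Lemma~\ref{l:locally fully symmetric} gives $G_v^{\Gamma(v)}=S_r$ and $G_w^{\Gamma(w)}=S_\ell$, and Lemma~\ref{lem:3 arc} gives local $(G,3)$-arc-transitivity; we may also assume $\girth(\Gamma)\ge5$ by Section~\ref{s:small}. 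Then Proposition~\ref{prop:equiv}(ii) together with $G$-st(edge)-transitivity shows that $G_{vw}$ (note $G_{\{v,w\}}=G_{vw}$, as $v,w$ have different valencies) induces the full group $S_{r-1}\times S_{\ell-1}$ on $(\Gamma(v)\cup\Gamma(w))\setminus\{v,w\}$. In particular $G$ is locally primitive, so Theorem~\ref{t:vB} applies to the edge $\{v,w\}$. The central object is the hypothesis $G_w^{[2]}=1$: it makes the action of $G_w^{[1]}$ on the sphere $\Gamma_2(w)$ faithful, yielding a subdirect embedding
\[ G_w^{[1]}\hookrightarrow\prod_{y\in\Gamma(w)}\bigl(G_w^{[1]}\bigr)^{\Gamma(y)\setminus\{w\}},\]
where, because $G_w^{[1]}\lhd G_{wy}$, each factor is a normal subgroup of $G_{wy}^{\Gamma(y)\setminus\{w\}}=S_{r-1}$, and is nontrivial since $G_w^{[1]}\ne1$ and $G_w$ is transitive on $\Gamma(w)$.

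The argument then splits on whether $G_{vw}^{[1]}$ is trivial. If $G_{vw}^{[1]}=1$ then, as $G_{vw}^{\Gamma(v)\cup\Gamma(w)}=S_{r-1}\times S_{\ell-1}$, we get $G_{vw}\cong S_{r-1}\times S_{\ell-1}$ outright; reading off the two factors as the fixators $G_w^{[1]}$ (trivial on $\Gamma(w)\setminus\{v\}$) and $G_v^{[1]}$ (trivial on $\Gamma(v)\setminus\{w\}$) gives $G_w^{[1]}=S_{r-1}$ and $G_v^{[1]}=S_{\ell-1}$ with no alternating-versus-symmetric ambiguity, and since these fixators act in independent directions from the top actions $S_r$ and $S_\ell$, a splitting argument yields the direct products $G_v=S_r\times S_{\ell-1}$, $G_w=S_\ell\times S_{r-1}$ of conclusion~(i). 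If instead $G_{vw}^{[1]}\ne1$, I first reduce to $r\ge6$: when $r\le5$ conclusion~(iii) holds vacuously. For $r\ge6$ each factor above is $A_{r-1}$ or $S_{r-1}$, and since $G_w/G_w^{[1]}=S_\ell$ permutes the $\ell$ coordinates fully, the only $S_\ell$-invariant subdirect subgroups of the product are the full product and the total diagonal. A diagonal would force $G_{vw}^{[1]}=1$ (the kernel on a single coordinate), so we must be in the full-product case, giving $(A_{r-1})^{\ell}\le G_w^{[1]}\le(S_{r-1})^{\ell}$; taking the kernel on the $v$-coordinate yields $(A_{r-1})^{\ell-1}\le G_{vw}^{[1]}\le(S_{r-1})^{\ell-1}$, the first assertion of~(ii).

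It remains to reconstruct $G_v$ and $G_w$ as the stated sandwiches, and this is where I expect the real work to lie. The upper bound $G_w\le S_{r-1}\Wr S_\ell$ comes from $G_w$ normalising $(A_{r-1})^\ell$ while inducing $S_\ell$ on $\Gamma(w)$; for $G_v$ one uses that $G_{vw}^{[1]}$ is not a $p$-group (it contains $(A_{r-1})^{\ell-1}$ with $r-1\ge5$), so Theorem~\ref{t:vB} must hold in the orientation $G_{vw}^{[1]}=G_v^{[2]}$, which identifies the kernel $G_v^{[2]}$ and lets one build $G_v$ from the top action $S_r$ on $\Gamma(v)$ and the induced action on $\Gamma_2(v)$, landing inside $S_r\times(S_{r-1}\Wr S_{\ell-1})$. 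The delicate points, which I would treat with care, are establishing the \emph{lower} bounds and the exact extension data --- in particular the ``$.2.$'' factors and the presence of the full products of \emph{alternating} groups, not merely their symmetric overgroups --- together with cleanly quarantining all genuinely exceptional small-valency configurations (the extra normal subgroups of $S_{r-1}$ when $r-1\le4$, and the failure of the simple-socle dichotomy) into conclusion~(iii). Any residual finite list of small cases I would settle, as in Lemma~\ref{val>4}, by a direct structural check or a short \textsf{GAP} computation.
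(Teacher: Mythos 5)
Your overall strategy coincides with the paper's: use $G_w^{[2]}=1$ to view $G_w$ inside $S_{r-1}\Wr S_\ell$, treat $G_w^{[1]}$ as a subdirect product of $\ell$ nontrivial normal subgroups of $S_{r-1}$ permuted transitively by $G_w^{\Gamma(w)}=S_\ell$, invoke Scott's dichotomy (full diagonal versus full socle) for $r\geq 6$, observe that the diagonal branch is exactly $G_{vw}^{[1]}=1$, and push $r\leq 5$ into conclusion (iii). Your treatment of case (i) and of the containments for $G_{vw}^{[1]}$ in case (ii) matches the paper in substance (though in (i) the "splitting argument" still needs to exclude a twisted extension such as $(S_{r-1}\times A_\ell).2$, which the paper does by exhibiting an element of $S_{\ell-1}\leq G_{vw}$ centralising $G_w^{[1]}$).

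There is, however, a genuine gap: the lower bounds in (ii), namely $(A_{r-1})^{\ell}.2.S_\ell\leq G_w$ and especially $(A_r\times(A_{r-1})^{\ell-1}).2.S_{\ell-1}\leq G_v$, are precisely what you defer as "the delicate points," and the route you sketch for $G_v$ cannot work as stated. You propose to build $G_v$ from its action on $\Gamma(v)$ and on $\Gamma_2(v)$; but in case (ii) you have just concluded (via Theorem~\ref{t:vB}) that $G_v^{[2]}=G_{vw}^{[1]}\geq(A_{r-1})^{\ell-1}\neq 1$, so the action of $G_v$ on the ball of radius $2$ about $v$ is \emph{not} faithful --- it only controls $G_v/G_v^{[2]}$, and in any case that ball embeds $G_v/G_v^{[2]}$ into $S_{\ell-1}\Wr S_r$, not into $S_r\times(S_{r-1}\Wr S_{\ell-1})$. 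The paper instead stays entirely on the $w$-side: it pins down $G_w/(A_{r-1})^\ell$ as $2^m.S_\ell$ with $2^m$ a submodule of the permutation module (this is where the ``$.2.$'' comes from), reads off $G_{vw}=(A_{r-1})^\ell.2^m.S_{\ell-1}$ and hence $G_v^{[1]}=(A_{r-1})^{\ell-1}.2^{m-1}.S_{\ell-1}$ from $G_{vw}/G_v^{[1]}\cong S_{r-1}$, identifies $(A_{r-1})^{\ell-1}$ as the unique minimal normal subgroup of $G_v^{[1]}$ (hence normal in $G_v$), and only then combines this with $G_v/G_v^{[1]}\cong S_r$ to produce the normal subgroup $A_r\times(A_{r-1})^{\ell-1}$ of $G_v$. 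None of this is a finite check amenable to \textsf{GAP}, since $r$ and $\ell$ are unbounded in case (ii). So your skeleton is the right one and yields (i), the bounds on $G_{vw}^{[1]}$, and the upper bounds in (ii), but the lower bounds of (ii) remain unproved.
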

\begin{proof}
Since $G_w^{[2]}=1$, we have
\[\begin{array}{l}
G_w\cong G_w/G_w^{[2]}\cong G_w^{\Gamma_1(w)\cup\Gamma_2(w)}\le 
G_{vw}^{\Gamma(v)}\Wr G_w^{\Gamma(w)}\cong S_{r-1}\Wr S_{\ell}, \text{ and }\\

G_{vw}\cong G_{vw}/G_w^{[2]}\cong G_{vw}^{\Gamma_1(w)\cup\Gamma_2(w)}\le 
(S_{r-1}\Wr S_{\ell-1})\times S_{r-1}\\
\end{array}\]
Let $\Gamma(w)=\{v_1,\ldots,v_\ell\}$ with $v_1=v$. For each $i\in\{1,\ldots, \ell\}$, let $B_i$ be the subgroup of $\Sym(\Gamma_1(w)\cup\Gamma_2(w))$ consisting of all permutations that fix each vertex
in $\Gamma(w)$, act trivially on $\Gamma(v_j)$ for $j\neq i$ and induce an element of $S_{r-1}$ on $\Gamma(v_i)\backslash\{w\}$. Then $B_i\cong S_{r-1}$. Also, let $C$ be a subgroup of $\Sym(\Gamma_1(w)\cup\Gamma_2(w))$ isomorphic to $S_\ell$ and such that $C$ acts faithfully on $\Gamma(w)$. 
In particular, if $g\in C$ and $v_i^g=v_j$ then $B_i^g=B_j$.
Then we can identify $G_w$ with a subgroup of $X:=(B_1\times B_2\times\cdots\times B_\ell)\rtimes C$.
Thus $G_{vw}=G_w\cap ( B_1\times ((B_2\times \cdots \times B_\ell)\rtimes C_v))$, where $C_v\cong S_{\ell-1}$ and $G_w^{[1]}=G_w\cap (B_1\times\cdots\times B_\ell)$. 

Let $\rho$ be the projection of $X$ onto $C$ and for each $i=1,2,\ldots, \ell$, let $\pi_i$ be the projection of $B_1\times\cdots\times B_\ell$ onto $B_i$. Since $G$ is st(edge)-transitive, by Lemma \ref{lem:edgeaction} we have that $G_w^{\Gamma(w)}=S_\ell$ and so $\rho(G_w)=C$. Moreover, 
$G_{vw}^{\Gamma(v)\cup\Gamma(w)}\cong S_{r-1}\times S_{\ell-1}$ and so $\rho(G_{vw})=C_v\cong S_{\ell-1}$ and $\pi_1( \ker(\rho)\cap G_{vw})=B_1\cong S_{r-1}$. Since $\ker\rho\cap G_w=G_w^{[1]}$ it follows that $\pi_1(G_w^{[1]})\cong S_{r-1}$. Since $G_w$ acts transitively on the $\ell$ factors $B_1,\ldots,B_{\ell}$ it follows that $\pi_i(G_w^{[1]})\cong\pi_j(G_w^{[1]})\cong S_{r-1}$ for distinct $i$ and $j$. 

Suppose that $r\geq 6$. Then $A_{r-1}$ is a nonabelian simple group and since $G_w$ acts primitively on the $\ell$ factors $B_1,\ldots,B_{\ell}$ it follows from \cite[p. 328]{scott} that either $G_{w}^{[1]}\cong \pi_i(G_{w}^{[1]})$ for each $i$, or $(A_{r-1})^\ell\cong \soc(B_1)\times\cdots\times\soc(B_\ell)\leqslant G_w^{[1]}$.  Since $G_{vw}^{[1]}\leqslant G_w^{[1]}$ and $\pi_1(G_{vw}^{[1]})=1$ these two cases correspond to $G_{vw}^{[1]}=1$ and $G_{vw}^{[1]}\neq 1$ respectively.

Suppose first that $G_{vw}^{[1]}=1$. Then $G_{vw}\cong S_{r-1}\times S_{\ell-1}$. As $\ker(\rho)\cap G_w\leqslant G_{vw}$ we have $\ker(\rho)\cap G_w\cong S_{r-1}$. Moreover, as $\rho(G_w)=S_\ell$ we have $G_w/(\ker(\rho)\cap G_w)\cong S_{\ell}$ with $\rho(G_{vw})=S_{\ell -1}$ and $G_{vw}\cap \ker\rho\cong S_{r-1}$. Thus either $G_w\cong S_{r-1}\times S_\ell$, or $G_w\cong (S_{r-1}\times A_{\ell}).2$ with each element of $G_w$ not in $S_{r-1}\times A_{\ell}$ inducing a nontrivial automorphism of both $S_{r-1}$ and $A_\ell$. Since $G_w$ contains  $G_{vw}\cong S_{r-1}\times S_{\ell-1}$, which has an element of $S_{\ell-1}$ not in $A_\ell$ that centralises $S_{r-1}$, the second case is not possible. Thus $G_w\cong S_{r-1}\times S_\ell$.  Similarly, $G_v^{[1]}=S_{\ell -1}$ and $G_v^{\Gamma(v)}\cong S_r$ and arguing as in the $G_w$ case we deduce that $G_v=S_r\times S_{\ell-1}$. Hence we are in case (i).

Assume now that $G_{vw}^{[1]}\neq 1$. Then, as we have already seen, $\soc(B_1)\times\cdots\times\soc(B_\ell)\leqslant G_w^{[1]}$.  Since $\pi_1(G_w^{[1]})=S_{r-1}$ and $\pi_i(G_w^{[1]})\cong \pi_j(G_w^{[1]})$ for all distinct $i$ and $j$, it follows that $G_w^{[1]}$ contains a subgroup $N$ isomorphic to $(A_{r-1})^\ell.2$ such that $\pi_i(N)\cong S_{r-1}$ for all $i$. Thus
$$(A_{r-1}^{\ell}).2.S_\ell \leqslant G_w \leqslant S_{r-1}\Wr S_\ell$$
Moreover, for the arc stabiliser we have
$$(A_{r-1})^{\ell}.2.S_{\ell-1} \leqslant G_{vw}\leqslant S_{r-1}\times (S_{r-1}\Wr S_{\ell-1})$$
with $(A_{r-1})^{\ell-1}\leqslant G_{vw}^{[1]}\leqslant (S_{r-1})^{\ell-1}$.

Consider the group $\overline{G}_w=G_w/(\soc(B_1)\times\cdots\times\soc(B_\ell))$. Then $2.S_{\ell}\leqslant \overline{G}_w\leqslant 2^\ell.S_\ell$. Hence $\overline{G}_w=2^m.S_\ell$ where the $2^m$ is a submodule of the permutation module for $S_\ell$. This implies that $G_{w}=(A_{r-1})^\ell.2^m.S_\ell$. Hence $G_{vw}=(A_{r-1})^\ell.2^m.S_{\ell-1}$ and $G_v^{[1]}=(A_{r-1})^{\ell-1}.2^{m-1}.S_{\ell-1}$, as $G_{vw}/G_{v}^{[1]}\cong G_{vw}^{\Gamma(v)}\cong S_{r-1}$. Observe that $G_v^{[1]}$ has a unique minimal normal subgroup $N$ and $N\cong (A_{r-1})^{\ell-1}$. Thus $N\lhd G_v$. Since $G_v/G_v^{[1]}\cong S_r$ and $G_v^{[1]}$ already induces $S_{\ell-1}$ on the $\ell-1$ distinct simple factors of $N$, it follows that $G_v$ contains a normal subgroup isomorphic to  $A_r\times (A_{r-1})^{\ell-1}$. We then deduce that $G_v=(A_r\times (A_{r-1})^{\ell-1}).2^m.S_\ell$ and in particular part (ii) holds.
\end{proof}

By Lemma \ref{[2]=1}  we may thus assume that $G_v^{[2]}\neq 1$ and $G_w^{[2]}\neq 1$. Moreover, by Lemma \ref{3,5} we may assume that $G_{vw}^{[1]}$ acts trivially on $\Gamma(u)$ for some neighbour $u$ of $v$. Since $G_{vw}$ is transitive on $\Gamma(v)\setminus\{w\}$ and normalises $G_{vw}^{[1]}$,
we conclude that $G_{vw}^{[1]}$ fixes all vertices in $\Gamma_2(v)$. In particular, $G_{vw}^{[1]}=G_v^{[2]}$.

\begin{lemma}\label{[2]not=1}
Assume that $G_v^{[2]}\neq 1\neq G_w^{[2]}$ and $G_{vw}^{[1]}=G_v^{[2]}$ fixes all vertices in $\Gamma_2(v)$. Then $r\leq 5$ and either $G_w^{[2]}$ or $G_v^{[2]}$ is a $p$-group with $p=2$ or $3$.
\end{lemma}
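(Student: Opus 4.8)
The plan is to split the conclusion into two halves. The statement that one of $G_v^{[2]}$, $G_w^{[2]}$ is a $p$-group should come almost directly from the Thompson--Wielandt-type theorem of van Bon (Theorem~\ref{t:vB}), while the numerical bounds $r\le5$ and $p\in\{2,3\}$ should be extracted by exhibiting a \emph{nontrivial normal $p$-subgroup of a symmetric group $S_{r-1}$} and invoking the elementary fact that $S_{n}$ has such a subgroup only for $n\le4$.

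First I would apply Theorem~\ref{t:vB} to the edge $\{v,w\}$. Since $\G$ is $G$-star-transitive it is locally fully symmetric (Lemma~\ref{l:locally fully symmetric}), so each $G_v^{\G(v)}$ is $2$-transitive, in particular primitive, and the hypothesis of Theorem~\ref{t:vB} holds. In case (i) the kernel $G_{vw}^{[1]}=G_v^{[2]}$ is itself a $p$-group. In case (ii), read in the orientation compatible with our standing hypothesis $G_{vw}^{[1]}=G_v^{[2]}$, the group $G_w^{[2]}=G_w^{[3]}$ is a $p$-group (if instead van Bon's statement required the interchange, it would give $G_{vw}^{[1]}=G_w^{[2]}$, whence $G_v^{[2]}=G_w^{[2]}$ is a $p$-group). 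Since $G_w^{[2]}\le G_{vw}^{[1]}=G_v^{[2]}$, in every case at least one of $G_v^{[2]}$, $G_w^{[2]}$ is a $p$-group, which is the first half of the conclusion.

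For the bounds I would study the action of $G_v^{[2]}=G_{vw}^{[1]}$ on a neighbourhood of valency $r$. Fix $x\in\G(w)\setminus\{v\}$. As $\girth(\G)\ge6$, the vertex $x$ lies at distance $2$ from $v$, hence in the same part as $v$ and of valency $r$, and $G_v^{[2]}$ fixes $x$ and $w\in\G(x)$ while permuting the remaining $r-1$ neighbours $\G(x)\cap\Gamma_3(v)$. The crucial point is normality: because $G_{vw}^{[1]}$ is the kernel of the action of $G_{vw}$ on $\G(v)\cup\G(w)$ it is normal in $G_{vw}$, and since it fixes $x$ it is contained in, hence normal in, the $2$-arc stabiliser $G_{vwx}$. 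By Lemma~\ref{lem:3 arc} and the edge action of Lemma~\ref{lem:edgeaction} we have $G_{vwx}^{\G(x)}=S_{r-1}$, so $(G_v^{[2]})^{\G(x)}\lhd S_{r-1}$. If $G_v^{[2]}$ is a $p$-group this induced group is a $p$-group, and it is nontrivial for a suitable $x$ (otherwise $G_v^{[2]}$ fixes every such $\G(x)$, hence by $G_v$-transitivity all of $\Gamma_3(v)$, so that $G_v^{[2]}=G_v^{[3]}$, the degenerate case discussed below). A nontrivial normal $p$-subgroup of $S_{r-1}$ exists only for $r-1\le4$, and the possibilities $C_2\lhd S_2$, $C_3\lhd S_3$ and the Klein four-group $V_4\lhd S_4$ then force $r\le5$ and $p\in\{2,3\}$.

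The main obstacle is to run this argument when $G_v^{[2]}$ is \emph{not} a $p$-group, that is, in van Bon's case (ii) with $G_w^{[2]}$ properly contained in $G_v^{[2]}$: then $(G_v^{[2]})^{\G(x)}$ is still a normal subgroup of $S_{r-1}$ but need not be a $p$-group, and for $r\ge6$ it would have to contain $A_{r-1}$. To eliminate this I would assume $r\ge6$ and analyse the section $G_v^{[2]}/G_w^{[2]}$, which embeds into $\prod_{x\in\G(w)\setminus\{v\}}(G_v^{[2]})^{\G(x)}$ with kernel $G_w^{[2]}$: were every factor a $p$-group then $G_v^{[2]}$ would be a $p$-group, so some factor contains $A_{r-1}$, giving $(G_w^{[1]})^{\G(x)}\supseteq A_{r-1}$. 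I would then feed this back into Theorem~\ref{t:vB} for the edge $\{w,x\}$ together with Weiss's structural description of the point stabiliser in \cite{W2} to contradict the fact that $G_w^{[2]}=G_w^{[3]}$ is a nontrivial $p$-group. The secondary obstacle, the degenerate case $G_v^{[2]}=G_v^{[3]}$, forces $G_v^{[2]}=G_w^{[2]}$ (since then $G_v^{[2]}$ fixes $\Gamma_2(w)$ as well, giving $G_v^{[2]}\le G_w^{[2]}$), and I would dispose of it by transporting the normality trick to $w$ or by a pushing argument based on $G_w^{[2]}=G_w^{[3]}$.
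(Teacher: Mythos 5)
Your first step (Theorem~\ref{t:vB} gives that one of $G_v^{[2]}$, $G_w^{[2]}$ is a $p$-group) and your treatment of the case in which $G_v^{[2]}$ itself is a $p$-group essentially match the paper: there too one shows that $(G_v^{[2]})^{\Gamma(x)}$ is a nontrivial subnormal $p$-subgroup of $G_{wx}^{\Gamma(x)}\cong S_{r-1}$, the nontriviality coming from the observation that otherwise $G_v^{[2]}\le G_w^{[2]}\le G_{vw}^{[1]}=G_v^{[2]}$, so $G_v^{[2]}=G_w^{[2]}$, which by edge-transitivity and connectivity forces $G_v^{[2]}$ to fix every vertex. That connectivity step is also the correct way to kill your ``degenerate case''; your ``pushing argument'' gesture points the right way but needs to be stated.

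The genuine gap is your ``main obstacle'', i.e.\ van Bon's case (ii), where $G_{vw}^{[1]}=G_v^{[2]}$ need not be a $p$-group and only $G_w^{[2]}=G_w^{[3]}$ is. Your plan --- force $A_{r-1}\leq (G_v^{[2]})^{\Gamma(x)}$ for $r\geq 6$ and then contradict this by reapplying Theorem~\ref{t:vB} to the edge $\{w,x\}$ together with the stabiliser structure from \cite{W2} --- is not carried out, and \cite{W2} concerns vertex-transitive $s$-arc-transitive graphs, so it does not obviously apply in this biregular, vertex-intransitive setting; moreover, even a successful contradiction for $r\geq 6$ would not yield $p\in\{2,3\}$ for the $p$-group $G_w^{[2]}$, which the lemma also asserts. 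The missing idea is simpler: in case (ii) abandon $G_v^{[2]}$ entirely and run your own normality trick on the $p$-group $G_w^{[2]}=G_w^{[3]}$ at distance three. For a $3$-arc $uvwx$ and $y\in\Gamma(u)\setminus\{v\}$ one has the subnormal chain $G_w^{[3]}\lhd G_{uv}^{[1]}\lhd G_u^{[1]}\lhd G_{uy}$, so $(G_w^{[3]})^{\Gamma(y)}$ is a subnormal $p$-subgroup of $G_{uy}^{\Gamma(y)}\cong S_{r-1}$; it is nontrivial because otherwise, using the transitivity of $G_{wvu}$ on $\Gamma(u)\setminus\{v\}$ and of $G_{wv}$ on $\Gamma(v)\setminus\{w\}$, the group $G_w^{[3]}$ would fix $\Gamma_3(v)$ pointwise and a connectivity argument again forces it to be trivial. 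This delivers $r\leq 5$ and $p\in\{2,3\}$ in case (ii) as well.
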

\begin{proof}
By Theorem~\ref{t:vB}, either $G_v^{[2]}$ is a $p$-group, or  $G_w^{[2]}=G_w^{[3]}$ is a $p$-group.

Suppose first that $G_v^{[2]}$ is a $p$-group. Then 
$$1\neq G_v^{[2]}\lhd G_w^{[1]}\lhd G_{wx}$$
Suppose that  $G_v^{[2]}$ acts trivially on $\Gamma(x)$.
Then as $G_v^{[2]}\lhd G_{vw}$ and $G_{vw}$ acts transitively on $\Gamma(w)$ 
it follows that $G_v^{[2]}$ acts trivially on $\Gamma_2(w)$.  
Thus, $G_v^{[2]}\le G_w^{[2]}\le G_{vw}^{[1]}=G_v^{[2]}$, and so $G_v^{[2]}=G_w^{[2]}$.
Since $G$ is edge-transitive on $\Gamma$, we conclude that $G_v^{[2]}$ fixes all vertices of $\Gamma$, which is a contradiction.
%
%
 Hence $G_v^{[2]}$ acts nontrivially on $\Gamma(x)$ and so $G_{wx}^{\Gamma(x)}\cong S_{r-1}$ has a nontrivial normal $p$-subgroup. Thus $r\leq 5$ and $p=2$ or $3$.

Next suppose that $G_w^{[2]}=G_w^{[3]}$ is a $p$-group.
Pick $y\in\Gamma(u)\setminus\{v\}$.
Then 
\[1\not=G_w^{[2]}=G_w^{[3]}\lhd G_{uv}^{[1]}\lhd G_u^{[1]}\lhd G_{uy}.\]
Suppose that $G_w^{[3]}$ acts trivially on $\Gamma(y)$.
Since $G_{wvu}$ normalises $G_w^{[3]}$ and is transitive on $\Gamma(u)\backslash\{v\}$,
it follows that $G_w^{[3]}$ is trivial on $\Gamma(x)$ for all $x\in \Gamma(u)$,
and hence on $\Gamma_2(u)$.
Further, $G_{wv}$ normalises $G_w^{[3]}$ and is transitive on $\Gamma(v)\setminus\{w\}$,
and it follows that $G_w^{[3]}$ acts trivially on $\Gamma_2(z)$ for all $z\in\Gamma(v)$.
Thus, $G_w^{[3]}$ fixes all vertices in $\Gamma_3(v)$, and $G_w^{[3]}\le G_v^{[3]}$.
Since $G$ is edge-transitive on $\Gamma$, we conclude that $G_w^{[3]}=G_w^{[4]}$.
Hence we have 
\[1\not=(G_w^{[3]})^{\Gamma(y)}\lhd (G_{uv}^{[1]})^{\Gamma(y)}\lhd
(G_u^{[1]})^{\Gamma(y)}\lhd G_{uy}^{\Gamma(y)}\cong S_{r-1}.\]
Thus, $S_{r-1}$ has a subnormal $p$-subgroup and so $r\le 5$, and $p=2$ or 3.
\end{proof}

Combining the lemmas of this section, we obtain the characterisation of vertex-intransitive,
star-transitive and st(edge)-transitive graphs, as stated in Theorem~\ref{vtx-intrans}.

\section{Examples}\label{s:examples}

Lazarovich \cite{L} provides a list of examples of graphs $L$ which are star-transitive and st(edge)-transitive, as well as a discussion of earlier work on uniqueness of $(k,L)$-complexes.  In this section we use our results above to expand on the examples and discussion in \cite{L}, and to give new infinite families of examples in both the vertex-transitive  and vertex-intransitive cases.

Recall that we denote by $\XkL$ the collection of all simply-connected $(k,L)$-complexes and that we assume $L$ is finite and connected.

\subsection{Special cases}

\subsubsection{Cycles}

The only finite, connected, $2$-regular graph is the cycle on $n$ vertices $C_n$.  For $n \geq 3$, as observed in \cite{L}, the graph $C_n$ is star-transitive and st(edge)-transitive.  If $(k,n) \in \{ (3,6), (4,4), (6,3)\}$ and the polygons are metrised as regular Euclidean $k$-gons, then the unique simply-connected $(k,C_n)$-complex is the tessellation of the Euclidean plane by regular Euclidean $k$-gons.  If $k \geq k'$ and $n > n'$, or $k > k'$ and $n \geq n'$, where $(k',n') \in \{ (3,6), (4,4), (6,3)\}$, then the unique simply-connected $(k,C_n)$-complex is combinatorially isomorphic to the tessellation of the hyperbolic plane by regular hyperbolic $k$-gons with vertex angles $\frac{2\pi}{n}$. 

\subsubsection{Cubic graphs}

Let $k \geq 3$ be an integer and let $L$ be a finite, connected, cubic graph, such that the pair $(k,L)$ satisfies the Gromov Link Condition.  In \cite{Sw}, \Swiatkowski\ proved that if $k \geq 4$ and $L$ is $3$-arc transitive, then there exists a unique simply-connected $(k,L)$-complex, while if $k = 3$ and $L$ is $3$-arc transitive, or $k \geq 3$ and $L$ is not $3$-arc transitive, $|\XkL| > 1$.  (In fact, \Swiatkowski\ proved that in these latter cases $\XkL$ is uncountable; compare \cite[Theorem B]{L}.)

It follows from results of Tutte \cite{tutte47, tutte59} that, as observed in \cite{L}, a connected cubic graph which is $3$-arc transitive is both star-transitive and st(edge)-transitive.  Conversely, if a connected cubic graph $L$ is star-transitive and st(edge)-transitive then $L$ is $3$-arc transitive, by Corollary \ref{c:3-arc transitive} above.  Together with the main results of \cite{L}, this recovers the results of \Swiatkowski\  from \cite{Sw} when $k \geq 4$ is even.

The collection of $3$-arc transitive cubic graphs is too large to classify.  For examples of such graphs, see for instance \cite{GR}.

\subsubsection{Complete bipartite graphs}

It is observed in \cite{L} that the complete bipartite graph $K_{m,n}$ is star-transitive and st(edge)-transitive.   The unique simply-connected $(4,K_{m,n})$-complex is the product of an $m$-valent and an $n$-valent tree \cite{Wise96}.  If $k > 4$, $m,n \geq 2$ and either $k$ is even or $m = n$ the unique simply-connected $(k,K_{m,n})$-complex is isomorphic to Bourdon's building, which is a $2$-dimensional hyperbolic building (see \cite{B1}).  If $k$ is odd and $m \ne n$, there is no $(k,L)$-complex.

By Lemma \ref{lem:valency one} above, the only finite, connected graphs with a vertex of valency one that are star-transitive and st(edge)-transitive are the complete bipartite graphs $K_{1,n}$.  If $n \geq 2$ and $k \geq 4$ is even then the unique simply-connected $(k,K_{1,n})$-complex $X$ is a ``tree" of $k$-gons, with alternating edges of each $k$-gon contained in either a unique $k$-gon or $n$ distinct $k$-gons.

\subsubsection{Odd graphs}\label{s:odd}

For $n \geq 2$ the \emph{Odd graph} $O_{n+1}$ is defined to have vertex set the $n$-element subsets of $\{ 1, 2, \ldots, 2n+1\}$, with two vertices being adjacent if the corresponding $n$-sets are disjoint.  Thus the graph is $(n+1)$-valent.  The Petersen graph is the case $n = 2$.  

As noted in \cite{L}, the Odd graphs are star-transitive and st(edge)-transitive.  Their vertex stabilisers are $S_{n} \times S_{n+1}$, and their edge-stabilisers are $S_n \Wr S_2$.  The girth of the Petersen graph $O_3$ is $5$ and for all $n \geq 3$ the girth of $O_{n+1}$ is $6$. 

It was proved by Praeger  \cite[Theorem 4]{CEP} that if $\Gamma$ is a graph of valency $r$ with  $A_r\leqslant G_v^{\Gamma(v)}$ and  $G$ primitive on $V\Gamma$ then either $|\Gamma_3(v)|=r(r-1)^2$, that is girth at least 7, or $\Gamma$ is the Odd graph on $2r-1$ points.  Thus by Lemma \ref{l:locally fully symmetric}, if $\Gamma$ is $G$-star-transitive with $G$ primitive on vertices then either $\Gamma$ has girth at least 7 or is an Odd graph.  Vertex-transitive graphs of girth 5 and with $G_v^{\Gamma(v)}=S_r$ were investigated by Ivanov, who showed that  that such graphs are either the Petersen graph, the Hoffman--Singleton graph, the double cover of the Clebsch graph or each 2-arc in $\Gamma$ is contained in a unique cycle of length 5 \cite[Lemma 5.6]{I}. Combining \cite[Lemma 5.4]{I} and Theorem \ref{v-star-st(edge)-t} we deduce that the only vertex-transitive graph of girth 5 that is both star-transitive and st(edge)-transitive is the Petersen graph.

We note that any finite cover of an Odd graph for which all automorphisms lift will also be star-transitive and st(edge)-transitive.  Thus for every valency $n+1 \geq 3$ we obtain an infinite family of $(n+1)$-regular graphs which are star-transitive and st(edge)-transitive.

\subsubsection{Spherical buildings}

Let $\mathfrak{L}$ be a simple Lie algebra of rank $2$ over $\mC$.  So
$\mathfrak{L}$ is of type $A_2$, $B_2 = C_2$ or $G_2$.  We
denote by $\mathfrak{L}(q)$ the (untwisted) Chevalley group of type
$\mathfrak{L}$ over the field $\GF(q)$.  Denote by $L$ the spherical
building associated to the group $\mathfrak{L}(q)$.  Assume that $k
\geq 5$.  Then by results
of Haglund~\cite{H}, in the following cases,
there is a unique simply-connected $(k,L)$-complex:
\begin{enumerate}
\item $\mathfrak{L}$ is of type $A_2$ and $q \in \{2,3\}$;
\item $\mathfrak{L}$ is of type $B_2$ and $q = 2$;
\item $\mathfrak{L}$ is of type $G_2$ and $q = 3$.
\end{enumerate}
In each of these cases, the $(k,L)$-complex obtained may be metrised
as a $2$-dimensional hyperbolic building.
(For larger values of $q$, Haglund also established the uniqueness of
$(k,L)$-complexes which satisfy some additional local conditions,
called
local reflexivity and constant holonomy.)

The local actions for the rank 2 buildings arising from finite groups of Lie type are given in  the first five columns of Table \ref{tab:localbuild}.  The full automorphism group $G$ in each case can be found in \cite[Chapter 4]{vM} and the local actions of $\soc(G)$ are given in \cite[Theorem 8.4.1 and Table 8.1]{vM}. The local actions for $G$ can then be deduced. By determining the values of $q$ for which the local action induces the full symmetric group we can deduce the information in the locally fully symmetric column. Moreover, since the rank 2 buildings for $\PSp(4,q)$ are vertex-transitive if and only if $q$ is even while the rank 2 buildings for $G_2(q)$ are vertex-transitive if and only if $q$ is a power of 3, the star-transitive column follows from Lemma \ref{l:suff star}. The case where $G=\PGammaU(4,2)$ was analysed in Example \ref{eg:hermitian}. The fact that the buildings for $\PSL(3,2)$ and $\PSp(4,2)$ are st(edge)-transitive follow from Corollary \ref{cor:cubic} and the fact that they are 4-arc transitive and 5-arc transitive respectively. The fact that the building for $G_2(3)$ is st(edge)-transitive follows from part (3) of Lemma \ref{val=4-2} and the preceding remark.

 Finally, the fact that the buildings for $\PSL(3,4)$ and $\PSp(4,4)$ are not st(edge)-transitive follows from the fact that for $g\in G_v$ to induce an odd permutation on  $\Gamma(v)$ it must induce a nontrivial field automorphism of the simple group associated with  $G$.  Thus  it is not possible for an element of $G_{vw}$ to induce an odd permutation of $\Gamma(v)$ and an even permutation of $\Gamma(w)$, and so the necessary condition of Lemma \ref{lem:stedge} does not hold.

\begin{center}
 \begin{table}
  \begin{tabular}{l|llllp{1.9cm}ll}
$G$             & $|\Gamma(v)|$ & $G_v^{\Gamma(v)}$ & $|\Gamma(w)|$ &$G_w^{\Gamma(w)}$ & locally fully symmetric & star-transitive& st(edge)-transitive\\ 
\hline 
$\PGaL(3,q)$ & $q+1$ & $\PGaL(2,q)$ & $q+1$ &$\PGaL(2,q)$ & $q=2,3,4$ &  $q=2,3,4$ &$q=2,3$ \\
$\PGaSp(4,q)$ & $q+1$ & $\PGaL(2,q)$ & $q+1$ &  $\PGaL(2,q)$ & $q=2,3,4$ &  $q=2,4$ & $q=2$\\
$\PGammaU(4,q)$ & $q^2+1$ &  $\PGaL(2,q^2)$ & $q+1$ & $\PGaL(2,q)$ & $q=2$& $q=2$& $q=2$\\
$\PGammaU(5,q)$ & $q^2+1$ &  $\PGaL(2,q^2)$ & $q^3+1$& $\PGammaU(3,q)$ & never\\
$\Aut(G_2(q))$        &  $q+1$ &  $\PGaL(2,q)$  & $q+1$  & $\PGaL(2,q)$  & $q=2,3,4$ & $q=3$ & $q=3$   \\
$\Aut({}^3D_4(q))$    & $q+1$ & $\PGaL(2,q)$ & $q^3+1$ & $\PGaL(2,q^3)$ & never \\
$\Aut({}^2F_4(q))$    & $q+1$ & $\PGaL(2,q)$ & $q^2+1$ & $\Aut(Sz(q))$ & never \\ \\

\end{tabular}

\caption{Local actions of rank 2 buildings}
\label{tab:localbuild}
 \end{table}

\end{center}

\subsection{New vertex-transitive examples}
One way to frame the search for examples is in the context of amalgams. Let $G$ be a group with subgroup $H$ and element $g\in G$ that does not normalise $H$ such that $g^2\in H$ and $\langle H,g\rangle=G$. Following Sabidussi \cite{Sa}, we can construct a connected graph $\mathrm{Cos}(G,H,g)$ with vertices the right cosets of $H$ in $G$ and $Hx$ being adjacent to $Hy$ if and only if $xy^{-1}\in HgH$. The group $G$ acts by right multiplication as an arc-transitive group of automorphisms of $\mathrm{Cos}(G,H,g)$ with the stabiliser in $G$ of the vertex corresponding to $H$ being $H$. Conversely, if $\Gamma$ is a connected graph with an arc-transitive group $G$ of automorphisms then for an edge $\{v,w\}$ and $g\in G_{\{v,w\}}\backslash G_{vw}$ we have that $\Gamma$ is isomorphic to $\mathrm{Cos}(G,G_v,g)$. Moreover, note that $G=\langle G_v,G_{\{v,w\}}\rangle$. Thus by knowing the possible $G_v$ and $G_{\{v,w\}}$ for a vertex-transitive, star-transitive and st(edge)-transitive graph, finding examples becomes a search for completions of the amalgam $(G_v,G_{\{v,w\}},G_{vw})$.  

Taking $(G_v,G_{\{v,w\}},G_{vw})=(S_r\times S_{r-1},S_{r-1}\Wr S_2,S_{r-1}\times S_2)$ as in case (1) of Theorem \ref{v-star-st(edge)-t}, the Odd graphs arise when we take $G=S_{2r-1}$ as a completion while the complete bipartite graphs arise when we take $G=S_r\Wr S_2$ as a completion. 

Another example can be constructed with $G=S_{(r-1)^2}$. Let $n=(r-1)^2$ and note that $n=\binom{r}{2}+\binom{r-1}{2}$. Thus letting $\Omega=\{1,\ldots,n\}$ we can identify $\Omega$ with the disjoint union of the set $\Omega_1$ of $2$-subsets of a set $A$ of size $r$ and the set $\Omega_2$ of 2-subsets of a set $B$ of size $r-1$. Then let $H$ be the subgroup of $G$ isomorphic to $S_r\times S_{r-1}$ that has $\Omega_1$ and $\Omega_2$ as its orbits on $\Omega$. Choose $\overline{B}$ to be a subset of $A$ of size $r-1$ and let $\overline{\Omega}_2$ be the subset of $\Omega_1$ consisting of all 2-subsets of $\overline{B}$. Note that $H_{\overline{\Omega}_2}\cong S_{r-1}\times S_{r-1}$ and choosing $g\in G$ to be an element of order 2 that interchanges $\overline{\Omega}_2$ and $\Omega_2$, we have that $\langle H_{\overline{\Omega}_2},g\rangle \cong S_{r-1}\Wr S_2$. Let  $\Gamma=\mathrm{Cos}(G,H,g)$ and let $v$ be the vertex corresponding to the coset $H$ and $w$ the vertex corresponding to the coset $Hg$. Then $G_v=H\cong S_r\times S_{r-1}$, $G_{vw}=H_{\overline{\Omega}_2}\cong S_{r-1}\times S_{r-1}$  and $G_{\{v,w\}}=\langle H_{\overline{\Omega}_2},g\rangle \cong S_{r-1}\Wr S_2$. Thus $G_v^{\Gamma(v)}\cong S_r$ and since $G_{\{v,w\}}$ acts faithfully on $\Gamma(v)\cup\Gamma(w)$ it follows from Lemmas \ref{l:suff star} and \ref{lem:stedge} that $\Gamma$ is 
star-transitive and st(edge)-transitive. It remains to show that $\Gamma$ is connected, that is, we need to show that $\langle H,g\rangle=G$. Let $X=\langle H,g\rangle$. Then $X$ is transitive on $\Omega$ and since $H$ is primitive on each of its orbits it follows that $X$ is primitive on $\Omega$. A transposition on $B$ induces $r-3$ transpositions on $\Omega_2$ and so $X$ contains an element $\sigma$ that moves only $2(r-3)$ elements of $\Omega$. Since $2(r-3) < 2(\sqrt{n}-1)$, \cite[Corollary 3]{LS} implies that $X=A_n$ or $S_n$. If $r$ is even then $\sigma$ is an odd permutation and so $X=S_n=G$. If $r$ is odd then a transposition of $B$ induces $r-2$ transpositions of $\Omega$, and so $H$ also contains an odd permutation in this case.  Thus $X=S_n=G$ for all $r$. We conclude that $\Gamma$ is connected.

\subsection{New vertex-intransitive examples}

We have already seen that the complete bipartite graphs $K_{n,m}$ with $n\neq m$ are examples in the vertex-intransitive case, as is the generalised quadrangle associated with $\PGammaU(4,2)$.

For all natural numbers $m > n \geq 3$, we will construct examples of connected star-transitive and st(edge)-transitive which are $(m,n)$-biregular. (The restriction to $n \geq 3$ is justified by the results of Section \ref{s:small} above on graphs of minimal valency $2$.)

As in the vertex-transitive case, the search for examples can be framed in terms of amalgams. Given a group $G$ and subgroups $L$ and $R$ such that $G=\langle L,R\rangle$ we can construct the bipartite graph $\mathrm{Cos}(G,L,R)$ with vertices the right cosets of $L$ in $G$ and the right cosets of $R$ in $G$ such that $Lx$ is adjacent to $Ry$ if and only if $Lx\cap Ry\neq \varnothing$. Then $G$ acts by right multiplication as an edge-transitive group of automorphisms of $\mathrm{Cos}(G,L,R)$. Conversely, if $\Gamma$ is a connected graph with a group $G$ of automorphisms that is edge-transitive but not vertex-transitive then $\Gamma$ is isomorphic to $\mathrm{Cos}(G,G_v,G_w)$ for some edge $\{v,w\}$. Also $G=\langle G_v,G_w\rangle$. Thus examples of vertex-intransitive, star-transitive and st(edge)-transitive graphs can be found by finding completions of the amalgam $(G_v,G_w,G_v\cap G_w)$.

\subsubsection{Construction from Johnson graphs}\label{s:johnson}

Let $\Gamma=\Gamma_{m,n}$ be the bipartite graph whose vertices are the $m$-subsets and $(m-1)$-subsets of an $n$-set, with two vertices being adjacent if one is contained in the other. Then $G=S_n\leqslant \Aut(\Gamma_{m,n})$. When $n=2m+1$, $\Gamma$ is called the \emph{doubled Odd graph}.  Let $v$ be an $m$-subset. Then $\Gamma(v)$ is the set of $(m-1)$-subsets contained in $v$ and $G_v=S_m\times S_{n-m}$ with $G_v^{\Gamma(v)}=S_m$. Given $w\in\Gamma(v)$ we have $G_w=S_{m-1}\times S_{n-m+1}$ and $\Gamma(w)$ is the set of $m$-subsets containing $w$. Thus $G_w^{\Gamma(w)}=S_{n-m+1}$ and $G_{vw}=S_{m-1}\times S_{n-m}$. Hence by Lemma \ref{l:suff star}, $\Gamma$ is $G$-star-transitive when $m\neq n-m+1$. Moreover, $G_{vw}$ acts faithfully on $\Gamma(v)\cup \Gamma(w)$ so Lemma \ref{lem:edgeaction} implies that $\Gamma$ is also $G$-st(edge)-transitive when $m\neq n-m+1$.

The \emph{Johnson graph} $J(n,m)$ is the graph with vertex set the set of $m$-subsets of an $n$-set such that two $m$-subsets are adjacent if and only if their intersection has size $m-1$. Note that an $(m-1)$-subset defines a maximal clique of $J(n,m)$, namely the set of all $m$-sets containing the given $(m-1)$-set. Let $\mathcal{C}$ be the set of all such maximal cliques and define the graph whose vertices are the vertices of $J(n,m)$ and the maximal cliques in $\mathcal{C}$, with adjacency being the natural inclusion. Then the new graph is isomorphic to $\Gamma_{m,n}$.

\subsubsection{Construction from Hamming graphs}\label{s:hamming}

Another new family of examples are as follows.  This is from Example 4.3 of Giudici--Li--Praeger \cite{GLP1}.  Let $H(k,n)$ be the Hamming graph whose vertex set is the set of ordered $k$-tuples (possibly with repeats) from a set $\Omega$ of size $n$, with two vertices being adjacent if and only if they differ in exactly one coordinate. Let $\Gamma$ be the bipartite graph with vertex set $\Delta_1\cup\Delta_2$, where $\Delta_1$ is the set of vertices of $H(m,n)$ and $\Delta_2$ is the set of maximal cliques of $H(m,n)$. Adjacency is given by inclusion. The group $G=S_n\Wr S_k$ is a group of automorphisms of both $H(m,n)$ and $\Gamma$, and acts transitively on the edges of $\Gamma$ with orbits $\Delta_1$ and $\Delta_2$ on vertices.

Let $\omega\in\Omega$ and $w=(\omega,\ldots,\omega)\in\Delta_1$. Then $G_w=S_{n-1}\Wr S_k$ and the maximal cliques of $H(m,n)$ containing $w$ are $$\{(\alpha,\omega,\ldots,\omega)\mid \alpha\in \Omega\}, \{(\omega,\alpha,\omega,\ldots,\omega)\mid \alpha\in \Omega\},\,\, \ldots, \,\, \{(\omega,\ldots,\omega,\alpha)\mid \alpha\in\Omega\}$$ Hence $G_w^{\Gamma(w)}=S_k$. Moreover, letting $v=\{(\alpha,\omega,\ldots,\omega)\mid \alpha\in \Omega\}$ we have that $G_v= S_n\times (S_n\Wr S_{k-1})$ and $G_v^{\Gamma(v)}=S_n$. Thus by Lemma \ref{l:suff star}, $\Gamma$ is star-transitive when $k\neq n$. Now $G_{vw}=S_{n-1}\times (S_{n-1}\Wr S_{k-1})$ and this induces $S_{k-1}$ on $\Gamma(w)\backslash\{v\}$ and independently induces $S_{n-1}$ on $\Gamma(v)\backslash\{w\}$. Hence by Lemma \ref{lem:edgeaction}, $\Gamma$ is st(edge)-transitive when $k\neq n$. This provides an example for case (3) of Theorem \ref{vtx-intrans} with $G_v$ and $G_w$ being as large as possible.

Let $\sigma\in S_n$ such that $\omega^\sigma=\omega$ and $S_n=\langle A_n,\sigma\ra$. Then for $H=\langle A_n^k,(\sigma,\ldots,\sigma)\rangle\rtimes S_k\cong (A_n^k).2.S_k$,  we have 
$$H_w=\langle (A_{n-1})^k,(\sigma,\ldots,\sigma)\rangle \rtimes S_k\cong (A_{n-1})^k.2.S_k$$ 
$$H_v=\langle A_n\times (A_{n-1})^{k-1},(\sigma,\ldots,\sigma)\rangle\rtimes S_{k-1}\cong (A_n\times (A_{n-1})^{k-1}).2.S_{k-1}$$ and 
$$H_{vw}= \langle A_{n-1}^k,(\sigma,\ldots,\sigma)\rangle S_{k-1}$$
 Moreover, we still have that $H_w^{\Gamma(w)}\cong S_{k}$, $H_v^{\Gamma(v)}\cong S_n$ and $H_{vw}$ induces $S_{k-1}$ on $\Gamma(w)\backslash\{v\}$ and independently induces $S_{n-1}$ on $\Gamma(v)\backslash\{w\}$. Thus $\Gamma$ is also $H$-star-transitive and $H$-st(edge)-transitive. This gives an example for case (3) of Theorem \ref{vtx-intrans} with the vertex stabilisers being as small as possible.

\subsubsection{}

This example is a specialisation of \cite[Example 4.6]{GLP2}. Let $n$ be a positive integer coprime to 3, let $V=\GF(3)^n$ and $W$ be the subspace of $V$ of all vectors $(v_1,\ldots,v_n)$ such that $\sum v_i=0$.  Let $G_0=S_n\times Z$, where $Z$ is the group of scalar transformations and $V$ is the natural permutation module for $S_n$. Then $G_0$ fixes the subspace $W$. Let $v=(1,\ldots,1,-n+1)\in W$ and let $\Delta$ be the set of all translates of images of $\la v\ra$ under $G_0$, that is, $\Delta=\{\la v\ra^g +w\mid w\in W, g\in G_0\}$. Note that $|\la v\ra^{G_0}|=n$. Let $\Gamma$ be the bipartite graph with vertex set $W\cup \Delta$ and adjacency given by inclusion. Then $\Gamma$ is biregular with bivalency $\{n,3\}$ and $G=W\rtimes G_0\leqslant \Aut(\Gamma)$.

Note that the stabiliser in $G$ of the zero vector is $G_0$ and $\Gamma(0)=\la v\ra^{G_0}$. Thus $G_0^{\Gamma(0)}=S_n$. Moreover, $\Gamma(\la v\ra)=\{\lambda v\mid \lambda\in\GF(3)\}$ and $G_{\la v\ra}=(\la v\ra\rtimes Z)\times S_{n-1}$. Thus $G_{\la v\ra}^{\Gamma(\la v\ra)}=S_3$. Hence by Lemma \ref{l:suff star}, $\Gamma$ is $G$-star-transitive. Moreover, $G_{0,\la v\ra}=S_{n-1}\times Z$ acting faithfully on $\Gamma(0)\cup \Gamma(\la v\ra)$. Since $Z\cong S_2$, Lemma \ref{lem:edgeaction} implies that $\Gamma$ is st(edge)-transitive. We note that $\Gamma$ belongs to case (i) of Lemma \ref{[2]=1}.

\end{document}